\newenvironment{figurehere}
  {\def\@captype{figure}}
  {}
\theoremstyle{definition} 
 \newtheorem{definition}{Definition}[section]
 \newtheorem{remark}[definition]{Remark}
 \newtheorem{example}[definition]{Example}
\newtheorem*{notation}{Notation}  
\theoremstyle{plain}      
 \newtheorem{proposition}[definition]{Proposition}
 \newtheorem{theorem}[definition]{Theorem}
 \newtheorem{lemma}[definition]{Lemma}
\newcommand{\C}{\mathbb{C}}
\newcommand{\Z}{\mathbb{Z}}
\newcommand{\Q}{\mathbb{Q}}
\newcommand{\R}{\mathbb{R}}
\newcommand{\N}{\mathbb{N}}
\def\M{\mathcal{M}}
\def\Mcomb{\M^{comb}}
\def\Mbar{\overline{\M}}
\def\Mbarcomb{\Mbar^{comb}}
\def\W{W}
\def\Wbar{\ol{\W}}
\def\GG{\mathbb{G}}
\def\Cc{\mathcal{C}}
\def\Teich{\mathcal{T}}
\def\pa{\partial}
\def\Aut{\mathrm{Aut}}
\def\k{\kappa}
\def\rar{\rightarrow}
\def\arr#1#2{\stackrel{#1}{#2}}
\def\hra{\hookrightarrow}
\def\a{\alpha}
\def\Af{\mathfrak{A}}
\def\b{\beta}
\def\Ao{{\Af^{\circ}}}
\def\A8{{\Af^{\infty}}}
\def\l{\lambda}
\def\i{\iota}
\def\g{\gamma}
\def\G{\Gamma}
\def\D{\Delta}
\def\d{\delta}
\def\lra{\longrightarrow}
\def\s{\sigma}
\def\Si{\Sigma}
\def\e{\varepsilon}
\def\la{\langle}
\def\ra{\rangle}
\def\ul#1{\underline{#1}}
\def\ol#1{\overline{#1}}
\def\wh#1{\widehat{#1}}
\def\dis{\displaystyle}
\def\ora#1{\overrightarrow{#1}}
\def\ola#1{\overleftarrow{#1}}
\def\bold#1{\mbox{\boldmath$#1$}}
\def\ua{\ul{\bold{\a}}}
\def\ub{\ul{\bold{\b}}}
\def\ug{\ul{\bold{\g}}}
\def\up{\ul{p}}
\def\gr8{{\mathrm{gr}}_{\infty}}
\begin{document}

\title{Riemann surfaces, ribbon graphs \\ and combinatorial classes}

\author{Gabriele Mondello}
%
\address{
Department of Mathematics, Massachusetts Institute of Technology\\
77 Massachusetts Avenue, Cambridge MA 02139 USA\\
e-mail:\,\tt{gabriele@math.mit.edu}
}

\maketitle

\begin{abstract}
We begin by describing the duality between arc systems
and ribbon graphs embedded in a punctured surface
and explaining how to cellularize
the moduli space of curves in two different ways:
using Jenkins-Strebel differentials and using
hyperbolic geometry. We also briefly discuss
how these two methods are related.
Next, we recall the definition of Witten cycles
and we illustrate their connection with tautological
classes and Weil-Petersson geometry. Finally,
we exhibit a simple argument to prove that Witten classes
are stable.
\end{abstract}

\begin{classification}
32G15, 30F30, 30F45.
\end{classification}

\begin{keywords}
Moduli of Riemann surfaces, ribbon graphs, Witten cycles.
\end{keywords}

\setcounter{tocdepth}{2}
\tableofcontents   

\section{Introduction}\label{s-intro}
\subsection{Overview}
\subsubsection{Moduli space and Teichm\"uller space.}
Consider a compact oriented surface $S$ of genus $g$ together
with a finite subset $X=\{x_1,\dots,x_n\}$, such that $2g-2+n>0$.

The moduli space $\M_{g,X}$ is the set of all $X$-pointed
Riemann surfaces of genus $g$ up to isomorphism. Its universal
cover can be identified with
the Teichm\"uller space $\Teich(S,X)$, which
parametrizes complex structures on $S$ up to isotopy (relative
to $X$); equivalently, $\Teich(S,X)$ parametrizes
isomorphism classes of $(S,X)$-marked Riemann surfaces.
Thus, $\M_{g,X}$ is the quotient of $\Teich(S,X)$
under the action of the mapping class group
$\G(S,X)=\mathrm{Diff}_+(S,X)/\mathrm{Diff}_0(S,X)$.

As $\Teich(S,X)$ is contractible (Teichm\"uller \cite{teichmueller:collected}),
we also have that $\M_{g,X}\simeq B\G(S,X)$.
However, $\G(S,X)$ acts on $\Teich(S,X)$ discontinuously
but with finite stabilizers. Thus, $\M_{g,X}$ is naturally
an orbifold and $\M_{g,X}\simeq B\G(S,X)$ must be
intended in the orbifold category.

\subsubsection{Algebro-geometric point of view.}
As compact Riemann surfaces are complex algebraic curves,
$\M_{g,X}$ has an algebraic structure and is in fact
a Deligne-Mumford stack, which is the algebraic analogue
of an orbifold. The underlying space $M_{g,X}$ (forgetting the
isotropy groups) is a quasi-projective variety.

The interest for enumerative geometry of algebraic curves
naturally led to seeking for a suitable compactification of
$\M_{g,X}$. Deligne and Mumford \cite{deligne-mumford:irreducibility}
understood that it was sufficient to consider algebraic curves
with mild singularities to compactify $\M_{g,X}$. In fact,
their compactification $\Mbar_{g,X}$ is the moduli space of
$X$-pointed stable (algebraic) curves of genus $g$,
where a complex
projective curve $C$ is ``stable'' if its only singularities are
nodes (that is, in local analytic coordinates $C$ looks
like $\{(x,y)\in \C^2\,|\,xy=0\}$) and every irreducible component
of the smooth locus of $C\setminus X$ has negative Euler characteristic.

The main tool to prove the
completeness of $\Mbar_{g,X}$
is the stable reduction theorem, which essentially
says that a smooth holomorphic family $\mathcal{C}^*\rar \Delta^*$
of $X$-pointed Riemann surfaces of genus $g$ over the pointed
disc can be completed to a family over $\Delta$ (after
a suitable change of base $z\mapsto z^k$) using a stable curve.

The beauty of $\Mbar_{g,X}$ is that it is smooth (as an orbifold)
and that its coarse space $\ol{M}_{g,X}$ is a projective variety
(Mumford \cite{mumford:stability},
Gieseker \cite{gieseker:lectures}, Knudsen \cite{knudsen:projectivity2}
\cite{knudsen:projectivity3}, Koll\'ar \cite{kollar:projectivity}
and Cornalba \cite{cornalba:projectivity}).

\subsubsection{Tautological maps.}
The map $\Mbar_{g,X\cup\{y\}}\rar \Mbar_{g,X}$ that forgets
the $y$-point can be identified to the universal
curve over $\Mbar_{g,X}$ and
is the first example of tautological map.

Moreover, $\Mbar_{g,X}$ has a natural algebraic stratification,
in which each stratum corresponds to a topological type of
curve: for instance,
smooth curves correspond to the open stratum $\M_{g,X}$.
As another example:
irreducible curves with one node correspond to an irreducible
locally closed subvariety of (complex) codimension $1$,
which is the image of the (generically $2:1$)
tautological boundary
map $\M_{g-1,X\cup\{y_1,y_2\}}\rar \Mbar_{g,X}$ that glues
$y_1$ to $y_2$. Thus, every stratum is the image of
a (finite-to-one) tautological boundary map, and thus
is isomorphic to a finite quotient of a product of smaller
moduli spaces.

\subsubsection{Augmented Teichm\"uller space.}
Teichm\"uller theorists are more interested in compactifying
$\Teich(S,X)$ rather than $\M_{g,X}$. One of the most
popular way to do it is due to Thurston (see \cite{FLP:travaux}):
the boundary of $\Teich(S,X)$ is thus made of
projective measured laminations and it is homeomorphic to a sphere.

Clearly, there cannot be any clear link between a compactification
of $\Teich(S,X)$ and of $\M_{g,X}$, as the infinite
discrete group $\G(S,X)$ would not act
discontinuously on a compact boundary $\pa\Teich(S,X)$.

Thus, the $\G(S,X)$-equivariant bordification of $\Teich(S,X)$
whose quotient is $\M_{g,X}$ cannot be compact.
A way to understand it is to endow $\M_{g,X}$ (and $\Teich(S,X)$)
with the Weil-Petersson metric \cite{weil:onthemoduli} and
to show that its completion is exactly $\Mbar_{g,X}$
\cite{masur:extension}.
Hence, the Weil-Petersson completion $\ol{\Teich}(S,X)$
can be identified to the set of $(S,X)$-marked stable Riemann
surfaces.

Similarly to $\Mbar_{g,X}$, also $\ol{\Teich}(S,X)$
has a stratification by topological type and each stratum
is a (finite quotient of a) product of smaller Teichm\"uller spaces.

\subsubsection{Tautological classes.}
The moduli space $\Mbar_{g,X}$ comes equipped with natural
vector bundles: for instance, $\mathcal{L}_i$ is the holomorphic
line bundle whose fiber at $[C]$ is $T^{\vee}_{C,x_i}$.
Chern classes of these line bundles and their push-forward
through tautological maps generate the so-called tautological
classes (which can be seen in the Chow ring or in
cohomology). The $\k$ classes were
first defined by Mumford \cite{mumford:towards}
and Morita \cite{morita:surface} and then modified
(to make them behave better under tautological maps)
by Arbarello and Cornalba \cite{arbarello-cornalba:combinatorial}.
The $\psi$ classes were defined by E.Miller \cite{miller:homology}
and their importance was successively
rediscovered by Witten \cite{witten:intersection}.

The importance of the tautological classes is due to the following facts
(among others):
\begin{itemize}
\item
their geometric meaning appears quite clear
\item
they behave very naturally under the tautological maps
(see, for instance, \cite{arbarello-cornalba:combinatorial})
\item
they often occur in computations of enumerative geometry;
that is, Poincar\'e duals of interesting algebraic loci are
often tautological (see \cite{mumford:towards})
but not always (see \cite{graber-pandharipande:nontautological})!
\item
they are defined on $\Mbar_{g,X}$ for every $g$ and $X$
(provided $2g-2+|X|>0$),
and they generate
the stable cohomology ring over $\Q$ due to
Madsen-Weiss's solution \cite{madsen-weiss:mumford}
of Mumford's conjecture (see Section~\ref{ss:stability})
\item
there is a set of generators ($\psi$'s and $\k$'s) which have
non-negativity properties (see \cite{arakelov:families}
and \cite{mumford:towards})
\item
they are strictly related to the Weil-Petersson geometry
of $\Mbar_{g,X}$ (see \cite{wolpert:homology},
\cite{wolpert:positive}, \cite{wolpert:chern}
and \cite{mirzakhani:witten}).
\end{itemize}

\subsubsection{Simplicial complexes associated to a surface.}
One way to analyze the (co)homology of $\M_{g,X}$, and so
of $\G(S,X)$, is to construct a highly connected
simplicial complex on which $\G(S,X)$ acts.
This is usually achieved by considering complexes of
disjoint, pairwise non-homotopic simple closed curves
on $S\setminus X$ with suitable properties
(for instance, Harvey's complex of curves
\cite{harvey:geometric}).

If $X$ is nonempty (or if $S$ has boundary), then
one can construct a complex using systems of
homotopically nontrivial, disjoint 
arcs joining two (not necessarily distinct) points in $X$
(or in $\pa S$), thus obtaining
the arc complex $\Af(S,X)$ (see \cite{harer:virtual}).
It has an ``interior'' $\Ao(S,X)$ made of systems
of arcs that cut $S\setminus X$ in discs (or pointed discs)
and a complementary ``boundary'' $\A8(S,X)$.

An important result, which has many fathers
(Harer-Mumford-Thurston \cite{harer:virtual},
Penner \cite{penner:decorated}, Bowditch-Epstein
\cite{bowditch-epstein:natural}), says that
$|\Ao(S,X)|$ is $\G(S,X)$-equivariantly homeomorphic
to $\Teich(S,X)\times \Delta_X$
(where $\Delta_X$ is the standard simplex
in $\R^X$). Thus, we can transfer the cell
structure of $|\Ao(S,X)|$ to an (orbi)cell structure
on $\M_{g,X}\times\Delta_X$.

The homeomorphism is realized by coherently associating
a weighted system of arcs to every $X$-marked
Riemann surface, equipped
with a decoration $\up\in\Delta_X$.
There are two traditional ways
to do this: using the flat structure arising
from a Jenkins-Strebel quadratic differential
(Harer-Mumford-Thurston) with prescribed
residues at $X$
or using the hyperbolic metric coming from the
uniformization theorem (Penner and Bowditch-Epstein).
Quite recently, several other ways have been
introduced (see \cite{luo:decomposition},
\cite{luo:rigidity},
\cite{mondello:wp} and \cite{mondello:triang}).

\subsubsection{Ribbon graphs.}
To better understand the homeomorphism between
$|\Ao(S,X)|$ and $\Teich(S,X)\times\Delta_X$, it is often
convenient to adopt a dual point of view,
that is to think of weighted systems of arcs as 
of metrized graphs $\GG$,
embedded in $S\setminus X$ through a
homotopy equivalence.

This can be done by picking a vertex in each
disc cut by the system of arcs and joining
these vertices by adding an edge transverse to
each arc. What we obtain is an $(S,X)$-marked
metrized ribbon graph.
Thus, points in $|\Ao(S,X)|/\G(S,X)\cong\M_{g,x}\times\Delta_X$
correspond to metrized $X$-marked ribbon graphs of genus $g$.

This point of view is particularly useful to understand
singular surfaces (see also \cite{bowditch-epstein:natural},
\cite{kontsevich:intersection},
\cite{looijenga:cellular}, \cite{penner:boundary},
\cite{zvonkine:strebel}, \cite{acgh:II}
and \cite{mondello:triang}).
The object dual to a weighted
system of arcs in $\A8(S,X)$ is a collection of
data that we called an $(S,X)$-marked
``enriched'' ribbon graph. Notice that an
$X$-marked ``enriched'' metrized ribbon graph
does not carry
all the information needed to construct
a stable Riemann surface. Hence, the map
$\Mbar_{g,X}\times\Delta_X \rar |\Af(S,X)|/\G(S,X)$
is not a injective on the locus of singular curves, but still
it is a homeomorphism on a dense open subset.

\subsubsection{Topological results.}
The utility of the $\G(S,X)$-equivariant homotopy equivalence
$\Teich(S,X)\simeq |\Ao(S,X)|$ is the possibility of making
topological computations on $|\Ao(S,X)|$.
For instance, Harer \cite{harer:virtual} determined the virtual
cohomological dimension of $\G(S,X)$ (and so of $\M_{g,X}$)
using the high connectivity of $|\A8(S,X)|$ and he has established
that $\G(S,X)$ is a virtual duality group, by showing that
$|\A8(S,X)|$ is spherical.
An analysis of the singularities of $|\Af(S,X)|/\G(S,X)$
is in \cite{penner:arc}.

Successively, Harer-Zagier \cite{harer-zagier:euler} and Penner
\cite{penner:euler} have computed the orbifold Euler characteristic
of $\M_{g,X}$, where by ``orbifold'' we mean that a cell
with stabilizer $G$ has Euler characteristic $1/|G|$.
Because of the cellularization, the problem translates into
enumerating $X$-marked ribbon graphs of genus $g$ and
counting them with the correct sign.

Techniques for enumerating graphs and ribbon graphs
(see, for instance, \cite{biz:quantum}) have been
known to physicists for long time: they use asymptotic
expansions of Gaussian integrals over spaces of matrices.
The combinatorics of iterated integrations by parts
is responsible for the appearance of (ribbon) graphs (Wick's lemma).
Thus, the problem of computing $\chi^{orb}(\M_{g,X})$ can
be reduced to evaluating a matrix integral
(a quick solution is also given by Kontsevich
in Appendix~D of \cite{kontsevich:intersection}).

\subsubsection{Intersection-theoretical results.}
As $\M_{g,X}\times\Delta_X$ is not just homotopy equivalent to
$|\Ao(S,X)|/\G(S,X)$ but actually homeomorphic (through a
piecewise-linear real-analytic diffeomorphism), it is clear
that one can try to rephrase integrals over $\M_{g,X}$
as integrals over $|\Ao(S,X)|/\G(S,X)$, that is as
sums over maximal systems of arcs of integrals over a single
simplex.
This approach looked promising in order to compute Weil-Petersson
volumes (see Penner \cite{penner:WP-volumes}). Kontsevich
\cite{kontsevich:intersection} used it to compute volumes
coming from a ``symplectic form'' $\Omega=p_1^2\psi_1+\dots+p_n^2\psi_n$,
thus solving Witten's conjecture \cite{witten:intersection}
on the intersection numbers of the $\psi$ classes.

However, in Witten's paper \cite{witten:intersection} matrix
integrals entered in a different way. The idea
was that, in order to integrate
over the space of all conformal structures on $S$, one can
pick a random decomposition of $S$ into polygons, give
each polygon a natural Euclidean structure and extend
it to a conformal structure on $S$, thus obtaining a ``random''
point of $\M_{g,X}$. Refining the polygonalization of $S$
leads to a measure on $\M_{g,X}$. Matrix integrals are
used to enumerate these polygonalizations.

Witten also noticed that this refinement procedure
may lead to different limits,
depending on which polygons we allow.
For instance, we can consider decompositions into
$A$ squares,
or into $A$ squares and $B$ hexagons, and so on.
Dualizing this last polygonalization, we
obtain ribbon graphs embedded in $S$ with $A$ vertices
of valence $4$ and $B$ vertices of valence $6$.
The corresponding locus in $|\Ao(S,X)|$ is called a
Witten subcomplex.

\subsubsection{Witten classes.}
Kontsevich \cite{kontsevich:intersection} and Penner
\cite{penner:poincare} proved that Witten subcomplexes obtained
by requiring that the ribbon graphs have $m_i$ vertices of
valence $(2m_i+3)$ can be oriented
(see also \cite{conant-vogtmann:onatheorem})
and they give cycles in
$\Mbarcomb_{g,X}:=|\Af(S,X)|/\G(S,X)\times\R_+$, which are
denoted by $\ol{W}_{m_*,X}$.
The $\Omega$-volumes of these $\ol{W}_{m_*,X}$ are also computable using 
matrix integrals \cite{kontsevich:intersection}
(see also \cite{dfiz:polynomial}).

In \cite{kontsevich:topology}, Kontsevich
constructed similar cycles using structure constants of
finite-dimensional cyclic $A_{\infty}$-algebras with
positive-definite scalar product and he also claimed that
the classes $\W_{m_*,X}$ (restriction of $\ol{W}_{m_*,X}$ to
$\M_{g,X}$) are Poincar\'e dual to tautological classes.

This last statement (usually called
Witten-Kontsevich's conjecture) was settled independently by
Igusa \cite{igusa:mmm} \cite{igusa:kontsevich} and
Mondello \cite{mondello:combinatorial}, while very little
is known about the nature of the (non-homogeneous) $A_\infty$-classes.

\subsubsection{Surfaces with boundary.}
The key point of all constructions of a ribbon graph
out of a surface is that $X$ must be nonempty, so that
$S\setminus X$ can be retracted by deformation
onto a graph.
In fact, it is not difficult to see that the spine construction
of Penner and Bowditch-Epstein can be performed (even in a more
natural way) on hyperbolic surfaces $\Si$ with geodesic boundary.
The associated cellularization of the corresponding
moduli space is due to Luo
\cite{luo:decomposition}
(for smooth surfaces)
and by Mondello \cite{mondello:triang}
(also for singular surfaces, using Luo's result).

The interesting fact (see \cite{mondello:wp} and \cite{mondello:triang})
is that gluing semi-infinite cylinders at $\pa\Si$ 
produces (conformally)
punctured surfaces that ``interpolate'' between
hyperbolic surfaces with cusps and flat surfaces arising
from Jenkins-Strebel differentials.

\subsection{Structure of the paper.}
In Sections~\ref{ss:arcs} and \ref{ss:ribbon}, we carefully
define systems of arcs and ribbon graphs, both in the singular
and in the nonsingular case, and we explain how the duality
between the two works. Moreover, we recall Harer's results
on $\Ao(S,X)$ and $\A8(S,X)$ and we state a simple criterion
for compactness inside $|\Ao(S,X)|/\G(S,X)$.

In Sections~\ref{ss:deligne-mumford} and \ref{ss:system},
we describe the Deligne-Mumford moduli space of curves and
the structure of its boundary, the associated stratification
and boundary maps. In \ref{ss:augmented}, we explain
how the analogous bordification of the Teichm\"uller space
$\ol{\Teich}(S,X)$ can be obtained as completion
with respect to the Weil-Petersson metric.

Tautological classes and rings
are introduced in \ref{ss:tautological}
and Kontsevich's compactification of $\M_{g,X}$
is described in \ref{ss:konts}.

In \ref{ss:hmt}, we explain and sketch a proof
of Harer-Mumford-Thurston cellularization of the
moduli space and we illustrate the analogous
result of Penner-Bowditch-Epstein in \ref{ss:pbe}.
In \ref{ss:boundary}, we quickly discuss
the relations between the two constructions
using hyperbolic surfaces with geodesic boundary.

In \ref{ss:witten}, we define Witten subcomplexes
and Witten cycles and we prove (after Kontsevich)
that $\Omega$ orients them. We sketch the
ideas involved in the proof the Witten cycles
are tautological in Section~\ref{ss:witten-tautological}.

Finally, in \ref{ss:stability}, we recall Harer's
stability theorem and we exhibit a combinatorial
construction that shows that Witten cycles are stable.
The fact (and probably also the construction)
is well-known and it is also a direct consequence
of Witten-Kontsevich's conjecture and Miller's work.

\subsection{Acknowledgments.}
It is a pleasure to thank Shigeyuki Morita,
Athanase Papadopoulos and Robert C.~Penner for
the stimulating workshop ``Teichm\"uller space
(Classical and Quantum)'' they organized
in Oberwolfach (May 28th-June 3rd, 2006)
and the MFO for the hospitality.

I would like to thank Enrico Arbarello for
all I learnt from him about Riemann surfaces
and for his constant encouragement.

\section{Systems of arcs and ribbon graphs}\label{sec:ribbon}

Let $S$ be a compact oriented differentiable surface of genus $g$
with $n>0$ distinct marked points $X=\{x_1,\dots,x_n\}\subset S$.
We will always assume that the Euler characteristic
of the punctured surface $S\setminus X$
is negative, that is $2-2g-n<0$. This restriction
only rules out the cases in which $S\setminus X$ is the
sphere with less than $3$ punctures.

Let $\mathrm{Diff}_+(S,X)$ be the group of orientation-preserving
diffeomorphisms of $S$ that fix $X$ pointwise.
The {\it mapping class group} $\G(S,X)$ is
the group of connected components of $\mathrm{Diff}_+(S,X)$.

In what follows, we borrow some notation
and some ideas from \cite{looijenga:cellular}.

\subsection{Systems of arcs}\label{ss:arcs}
\subsubsection{Arcs and arc complex.}
An oriented {\it arc} in $S$ is a smooth path
$\ora{\a}:[0,1]\rar S$ such that $\ora{\a}([0,1])\cap X=
\{\ora{\a}(0),\ora{\a}(1)\}$,
up to reparametrization.
Let $\mathcal{A}^{or}(S,X)$ be the space of oriented arcs
in $S$, endowed with its natural topology.
Define $\s_1:\mathcal{A}^{or}(S,X)\rar\mathcal{A}^{or}(S,X)$
to be the orientation-reversing
operator and we will write $\s_1(\ora{\a})=\ola{\a}$.
Call $\a$ the $\s_1$-orbit of $\ora{\a}$ and denote by
$\mathcal{A}(S,X)$ the (quotient)
space of $\s_1$-orbits in $\mathcal{A}^{or}(S,X)$.

A {\it system of $(k+1)$-arcs} in $S$ is a collection
$\ua=\{\a_0,\dots,\a_k\}\subset \mathcal{A}(S,X)$ of $k+1$
unoriented arcs such that:
\begin{itemize}
\item
if $i\neq j$, then the intersection
of $\a_i$ and $\a_j$ is contained in $X$
\item
no arc in $\ua$ is homotopically trivial
\item
no pair of arcs in $\ua$ are homotopic to each other.
\end{itemize}
We will denote by $S\setminus\ua$ the
{\it complementary subsurface} of $S$
obtained by removing $\a_0,\dots,\a_k$. 

Each connected component
of the space of systems of $(k+1)$-arcs $\mathcal{AS}_k(S,X)$
is clearly contractible, with
the topology induced by the inclusion $\mathcal{AS}_k(S,X)
\hra \mathcal{A}(S,X)/\mathfrak{S}_k$.

Let $\Af_k(S,X)$ be the set of homotopy classes
of systems of $k+1$ arcs, that is $\Af_k(S,X):=\pi_0\mathcal{AS}_k(S,X)$.

The {\it arc complex} is the simplicial complex
$\dis \Af(S,X)=\bigcup_{k\geq 0}\Af_k(S,X)$.

\begin{notation}
We will implicitly identify arc systems $\ua$ and $\ua'$
that are homotopic to each other. Similarly,
we will identify the isotopic subsurfaces
$S\setminus\ua$ and $S\setminus\ua'$.
\end{notation}

\subsubsection{Proper simplices.}\label{sss:proper}

An arc system $\ua\in\Af(S,X)$ {\it fills}
(resp. {\it quasi-fills}) $S$ if $S\setminus\ua$
is a disjoint union of subsurfaces
homeomorphic to discs (resp.
discs and pointed discs).
It is easy to check that the star of $\ua$
is finite if and only if $\ua$ quasi-fills $S$.
In this case, we also say that $\ua$ is a
{\it proper} simplex of $\Af(S,X)$

Denote by $\A8(S,X)\subset\Af(S,X)$ the subcomplex of 
non-proper simplices and let $\Ao(S,X)=\Af(S,X)\setminus\A8(S,X)$
be the collection of proper ones.

\begin{notation}
We denote by $|\A8(S,X)|$ and $|\Af(S,X)|$ the topological
realizations of $\A8(S,X)$ and $\Af(S,X)$.
We will use the symbol $|\Ao(S,X)|$ to mean the complement
of $|\A8(S,X)|$ inside $|\Af(S,X)|$.
\end{notation}

\subsubsection{Topologies on $|\Af(S,X)|$.}

The realization $|\Af(S,X)|$ of the
arc complex can be endowed with two natural topologies
(as is remarked in \cite{bowditch-epstein:natural},
\cite{looijenga:cellular} and \cite{acgh:II}).

The former (which we call {\it standard}) is the finest
topology that makes the inclusions $|\ua|\hra|\Af(S,X)|$
continuous for all $\ua\in\Af(S,X)$; in other words,
a subset $U\subset |\Af(S,X)|$
is declared to be open if and only if
$U \cap |\ua|$ is open for every $\ua\in\Af(S,X)$.
The latter topology is induced by the path {\it metric} $d$,
which is the largest metric that restricts to the Euclidean
one on each closed simplex.

The two topologies are the same where $|\Af(S,X)|$ is locally
finite, but the latter is coarser elsewhere.
We will always consider all realizations to be endowed
with the metric topology.

\subsubsection{Visible subsurfaces.}

For every system of arcs $\ua\in\Af(S,X)$,
define $S(\ua)_+$ to be the largest isotopy class
of open subsurfaces of $S$ such that
\begin{itemize}
\item
every arc in $\ua$ is contained in $S(\ua)_+$
\item
$\ua$ quasi-fills $S(\ua)_+$.
\end{itemize}

The {\it visible subsurface}
$S(\ua)_+$ can be constructed by taking the union of
a thickening a representative
of $\ua$ inside $S$ and all those connected components
of $S\setminus\ua$ which are homeomorphic to discs or
punctured discs (this construction appears
first in \cite{bowditch-epstein:natural}).
We will always consider $S(\ua)_+$ as an open
subsurface (up to isotopy), homotopically equivalent to its
closure $\ol{S(\ua)_+}$, which is
an embedded surface with boundary.

\begin{center}
\begin{figurehere}
\psfrag{x1}{$x_1$}
\psfrag{x2}{$x_2$}
\psfrag{x3}{$x_3$}
\psfrag{S}{$S$}
\includegraphics[width=0.7\textwidth]{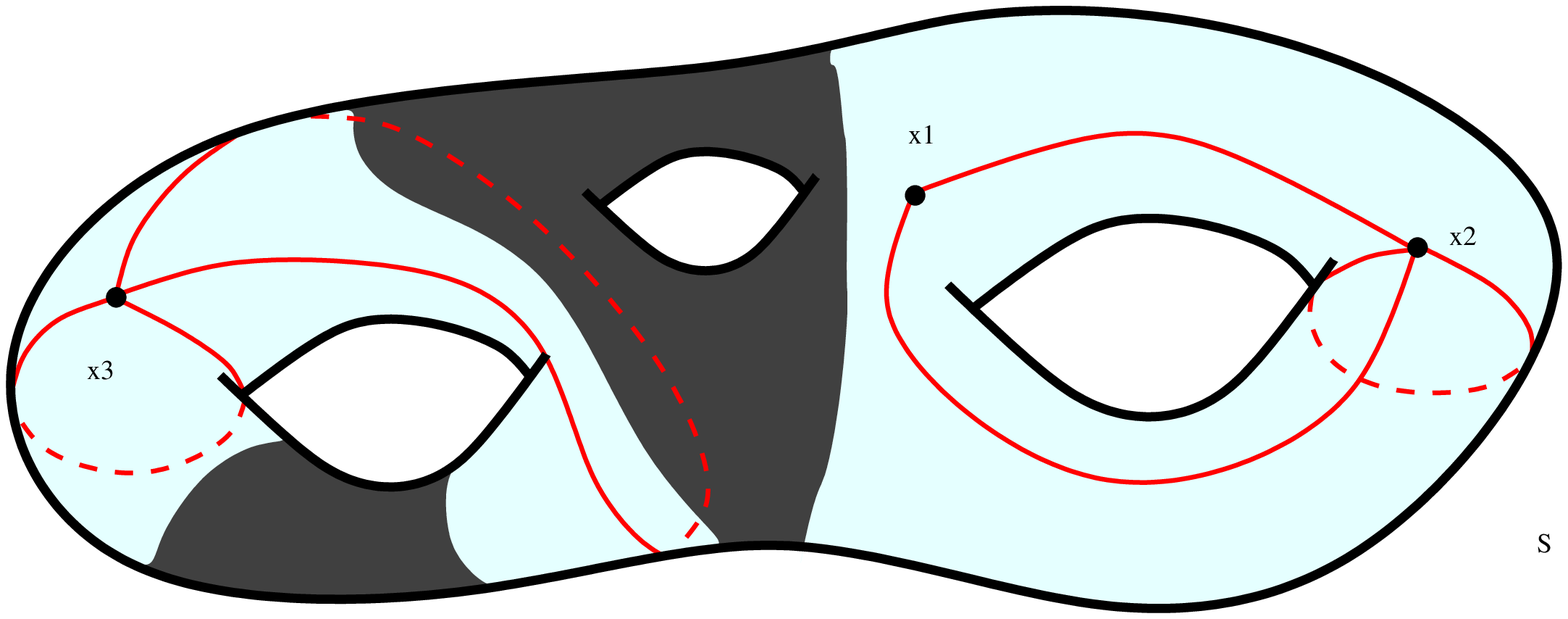}
\caption{The invisible subsurface is the dark non-cylindrical
component.}
\end{figurehere}
\end{center}

One can rephrase \ref{sss:proper} by saying that $\ua$
is proper if and only if all $S$ is $\ua$-visible.

We call {\it invisible subsurface}
$S(\ua)_-$ associated to $\ua$
the union of the connected components
of $S\setminus\ol{S(\ua)_+}$
which are not unmarked cylinders.
We also say that a marked point $x_i$ is (in)visible
for $\ua$ if it belongs to the $\ua$-(in)visible subsurface.

\subsubsection{Ideal triangulations.}

A maximal system of arcs $\ua\in\Af(S,X)$ is also called an
{\it ideal triangulation} of $S$. In fact, it is easy to check
that, in this case, each component
of $S\setminus \ua$ bounded by three arcs and so is
a ``triangle''. (The term ``ideal'' comes from the fact that
one often thinks of $(S,X)$ as a hyperbolic surface with
cusps at $X$ and of $\ua$ as a collection of hyperbolic geodesics.)
It is also clear that such an $\ua$ is proper.

\begin{center}
\begin{figurehere}
\psfrag{x1}{$x_1$}
\psfrag{x2}{$x_2$}
\psfrag{S}{$S$}
\includegraphics[width=0.5\textwidth]{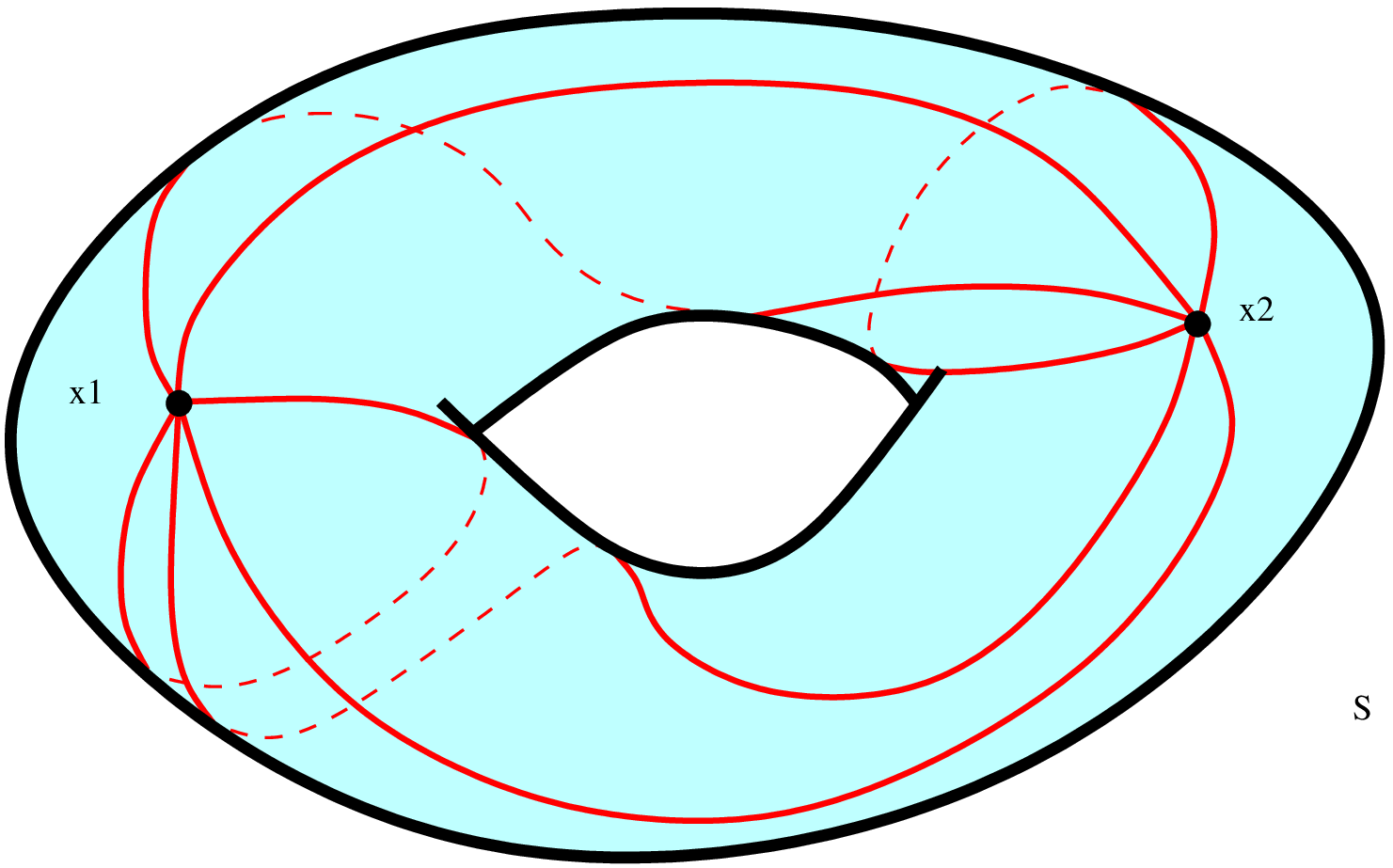}
\caption{An example of an ideal triangulation for $(g,n)=(1,2)$.}
\end{figurehere}
\end{center}

A simple calculation with the Euler characteristic of $S$
shows that an ideal triangulation is made of exactly $6g-6+3n$ arcs.

\subsubsection{The spine of $|\Ao(S,X)|$.}

Consider the baricentric subdivision $\Af(S,X)'$, whose
$k$-simplices are chains $(\ua_0\subsetneq \ua_1\subsetneq\dots
\subsetneq \ua_k)$. There is an obvious piecewise-affine
homeomorphism $|\Af(S,X)'|\rar |\Af(S,X)|$, that sends
a vertex $(\ua_0)$ to the baricenter of $|\ua_0|\subset|\Af(S,X)|$.

Call $\Ao(S,X)'$ the subcomplex of $\Af(S,X)'$, whose
simplices are chains of simplices that belong to $\Ao(S,X)$.
Clearly, $|\Ao(S,X)'|\subset |\Af(S,X)'|$ is contained in
$|\Ao(S,X)|\subset|\Af(S,X)|$
through the homeomorphism above.

It is a general fact that there is a deformation retraction
of $|\Ao(S,X)|$ onto $|\Ao(S,X)'|$: on each simplex of
$|\Af(S,X)'|\cap |\Ao(S,X)|$ this is given by projecting
onto the face contained in $|\Ao(S,X)'|$.
It is also clear that the retraction is $\G(S,X)$-equivariant.

In the special case of $X=\{x_1\}$, a proper system contains
at least $2g$ arcs; whereas a maximal system contains
exactly $6g-3$ arcs. Thus, the (real) dimension of
$|\Ao(S,X)'|$ is $(6g-3)-2g=4g-3$.

\begin{proposition}[Harer \cite{harer:virtual}]\label{prop:spine}
If $X=\{x_1\}$, the spine $|\Ao(S,X)'|$ has dimension $4g-3$.
\end{proposition}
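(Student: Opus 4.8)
The plan is to translate the statement into a combinatorial fact about chains of arc systems. By construction, a $k$-simplex of $\Ao(S,X)'$ is a strictly increasing chain $\ua_0\subsetneq\ua_1\subsetneq\cdots\subsetneq\ua_k$ of \emph{proper} arc systems, so $\dim|\Ao(S,X)'|$ is one less than the maximal number of terms of such a chain. Since the number of arcs strictly increases along the chain, everything reduces to the two extreme values of the number of arcs of a proper system.

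For the upper bound I would use the two facts recalled just before the statement: a proper system has at least $2g$ arcs, and a maximal one (an ideal triangulation) has exactly $6g-3$. The first holds because, with $X=\{x_1\}$, every arc has both endpoints at $x_1$, so $x_1$ is never interior to a complementary region; hence no complementary region is a pointed disc, a proper $\ua$ actually \emph{fills} $S$, and treating $x_1$, the $e$ arcs of $\ua$ and the $f$ complementary discs as a CW structure on $S$ gives $1-e+f=2-2g$, i.e.\ $e=2g-1+f\ge 2g$. Consequently, along a chain of proper systems the number of arcs strictly increases inside $\{2g,\dots,6g-3\}$, so the chain has at most $(6g-3)-2g+1=4g-2$ terms, and therefore $\dim|\Ao(S,X)'|\le 4g-3$.

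For the matching lower bound I would exhibit a chain with $4g-2$ terms. Present $S$ as a $4g$-gon with the usual identification word $a_1b_1a_1^{-1}b_1^{-1}\cdots a_gb_ga_g^{-1}b_g^{-1}$ and all vertices glued to $x_1$; then $\ua_0:=\{a_1,b_1,\dots,a_g,b_g\}$ is an arc system with $2g$ arcs that fills $S$, its complement being that single $4g$-gon. Choosing a triangulation of the $4g$-gon by $4g-3$ diagonals and inserting them one at a time produces an ideal triangulation $T$ of $S$ together with the chain $\ua_0\subsetneq\ua_0\cup\{d_1\}\subsetneq\cdots\subsetneq\ua_0\cup\{d_1,\dots,d_{4g-3}\}=T$. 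Every term is a subsystem of the arc system $T$, hence is itself an arc system, and every term contains $\ua_0$, hence still fills $S$ (its complement being a union of sub-polygons of the $4g$-gon) and is therefore proper. This is a $(4g-3)$-simplex of $\Ao(S,X)'$, so $\dim|\Ao(S,X)'|\ge 4g-3$, and together with the upper bound the proposition follows.

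The delicate point is the lower bound: one must know that the standard $4g$-gon decomposition exists and can be refined to an ideal triangulation by inserting one arc at a time, and that the intermediate systems are genuine arc systems --- equivalently, that the successive diagonals are pairwise non-isotopic essential arcs, so that $T$ really has $6g-3$ arcs. This last genericity is routine and is contained in Harer \cite{harer:virtual}; the rest (preservation of ``filling'' under enlarging a filling subsystem, and the coincidence of ``proper'' and ``filling'' when $|X|=1$) is immediate. One should keep in mind that the answer $4g-3$ is strictly smaller than the dimension $6g-4$ of $|\Ao(S,X)|\cong\Teich(S,X)$, which is precisely why the spine is useful.
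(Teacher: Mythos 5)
Your proof is correct and follows essentially the same route as the paper, which simply observes that a proper system with $X=\{x_1\}$ has between $2g$ and $6g-3$ arcs and computes $(6g-3)-2g=4g-3$ as the maximal chain length minus one. You merely fill in the details the paper leaves implicit (the Euler characteristic count showing properness forces at least $2g$ arcs, and the $4g$-gon construction of a maximal chain realizing the bound), so there is nothing genuinely different to compare.
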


\subsubsection{Action of $\s$-operators.}\label{sss:sigma-arc}

For every arc system $\ua=\{\a_0,\dots,\a_k\}$,
denote by $E(\ua)$ the subset $\{\ora{\a_0},\ola{\a_0},\dots,
\ora{\a_k},\ola{\a_k}\}$ of $\pi_0\mathcal{A}^{or}(S,X)$.
The action of $\s_1$ clearly restricts to $E(\ua)$.

For each $i=1,\dots,n$,
the orientation of $S$ induces a cyclic ordering of the
oriented arcs in $E(\ua)$ outgoing from $x_i$.

If $\ora{\a_j}$ starts at $x_i$, then define
$\s_{\infty}(\ora{\a_j})$ to be the oriented arc in $E(\ua)$
outgoing from $x_i$
that comes just {\it before} $\ora{\a_j}$.
Moreover, $\s_0$ is defined by $\s_0=\s_{\infty}^{-1}\s_1$.

If we call $E_t(\ua)$ the orbits of $E(\ua)$ under the
action of $\s_t$, then
\begin{itemize}
\item
$E_1(\ua)$ can be identified with $\ua$
\item
$E_{\infty}(\ua)$ can be identified with
the set of $\ua$-visible marked points
\item
$E_0(\ua)$ can be identified to the set of connected
components of $S(\ua)_+\setminus\ua$.
\end{itemize}
Denote by $[\ora{\a_j}]_t$ the $\s_t$-orbit of $\ora{\a_j}$,
so that $[\ora{\a_j}]_1=\a_j$ and $[\ora{\a_j}]_\infty$ is
the starting point of $\ora{\a_j}$, whereas
$[\ora{\a_j}]_0$ is the component of $S(\ua)_+\setminus\ua$
adjacent to $\a_j$ and which induces the orientation $\ora{\a_j}$
on it.

\subsubsection{Action of $\G(S,X)$ on $\Af(S,X)$.}

There is a natural right action of the mapping class group
\[
\xymatrix@R=0in{
\mathcal{A}(S,X)\times\G(S,X) \ar[rr] && \mathcal{A}(S,X) \\
(\a,g) \ar@{|->}[rr] && \a\circ g
}
\]
The induced action on $\Af(S,X)$
preserves $\A8(S,X)$ and so $\Ao(S,X)$.

It is easy to see that the stabilizer (under $\G(S,X)$) of
a simplex $\ua$ fits in the following exact sequence
\[
1 \rar \G_{cpt}(S\setminus\ua,X)\rar
\mathrm{stab}_\G(\ua) \rar \mathfrak{S}(\ua)
\]
where $\mathfrak{S}(\ua)$ is the group of permutations
of $\ua$ and $\G_{cpt}(S\setminus\ua,X)$ is the mapping
class group of orientation-preserving diffeomorphisms
of $S\setminus\ua$ {\it with compact support} that fix $X$.
Define the image of $\mathrm{stab}_\G(\ua)\rar\mathfrak{S}(\ua)$
to be the {\it automorphism group of $\ua$}.

We can immediately conclude that $\ua$ is proper
if and only if $\mathrm{stab}_\G(\ua)$ is finite
(equivalently, if and only if
$\G_{cpt}(S\setminus\ua,X)$ is trivial).

\subsubsection{Weighted arc systems.}

A point $\ol{w}\in |\Af(S,X)|$ consists of a map
$\ol{w}:\Af_0(S,X)\rar [0,1]$ such that
\begin{itemize}
\item
the support of $\ol{w}$ is a simplex $\ua=\{\a_0,\dots,\a_k\}\in\Af(S,X)$
\item
$\dis\sum_{i=0}^k \ol{w}(\a_i)=1$.
\end{itemize}
We will call $\ol{w}$ the {\it (projective) weight} of $\ua$.
A {\it weight} for $\ua$ is a
point of $w\in|\Af(S,X)|_{\R}:=|\Af(S,X)|\times\R_+$,
that is a map $w:\Af_0(S,X)\rar\R_+$
with support on $\ua$. Call $\ol{w}$ its associated
projective weight.

\subsubsection{Compactness in $|\Ao(S,X)|/\G(S,X)$.}\label{sss:compact}

We are going to prove a simple criterion for a subset of
$|\Ao(S,X)|/\G(S,X)$ to be compact.

Call $\mathcal{C}(S,X)$ the set of free homotopy classes
of simple closed curves on $S\setminus X$, which are neither
contractible nor homotopic to a puncture.

Define the ``intersection product''
\[
\i:\mathcal{C}(S,X)\times |\Af(S,X)|\rar \R_{\geq 0}
\]
as $\i(\g,\ol{w})=\sum_{\a} \i(\g,\a) \ol{w}(\a)$,
where $\i(\g,\a)$ is the {\it geometric} intersection number.
We will also refer to $\i(\g,\ol{w})$ as to the {\it length}
of $\g$ at $\ol{w}$. Consequently, we will say that
the {\it systol} at $\ol{w}$ is
\[
\mathrm{sys}(\ol{w})=
\mathrm{inf}\{\i(\g,\ol{w})\,|\,\g\in\mathcal{C}(S,X)\}.
\]

Clearly, the function $\mathrm{sys}$ descends to
\[
\mathrm{sys}:|\Af(S,X)|/\G(S,X)\rar \R_+\,
\]

\begin{lemma}\label{lemma:compact}
A closed subset $K\subset |\Ao(S,X)|/\G(S,X)$ is {\it compact} if
and only if $\exists \e>0$ such that
$\mathrm{sys}([\ol{w}])\geq \e$ for all $[\ol{w}]\in K$.
\end{lemma}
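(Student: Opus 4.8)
The plan is to prove the two implications separately, working with the systole function on $|\Ao(S,X)|/\G(S,X)$.

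\medskip

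\noindent\emph{Necessity of the bound.} Suppose $K$ is compact but $\mathrm{sys}$ is not bounded below on $K$. Then there is a sequence $[\ol{w}_j]\in K$ with $\mathrm{sys}([\ol{w}_j])\to 0$. By compactness we may pass to a subsequence converging to some $[\ol{w}_\infty]\in K$. Since $\mathrm{sys}$ is continuous (it is an infimum of the continuous functions $\i(\g,-)$, and on each closed simplex only finitely many $\g\in\Cc(S,X)$ can realize small values, so the infimum is locally a finite minimum of continuous functions, hence continuous), we would get $\mathrm{sys}([\ol{w}_\infty])=0$. But a point of $|\Ao(S,X)|$ corresponds to a weighted arc system $\ua$ that quasi-fills $S$; I claim this forces $\mathrm{sys}(\ol{w}_\infty)>0$. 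Indeed, if $\ua$ quasi-fills, every $\g\in\Cc(S,X)$ must cross $\ua$: otherwise $\g$ would be contained in $S\setminus\ua$, a union of discs and pointed discs, so $\g$ would be contractible or peripheral, contradicting $\g\in\Cc(S,X)$. Hence $\i(\g,\ua)\geq 1$ for every such $\g$, and since all positive weights $\ol{w}_\infty(\a_i)$ are bounded below on the (finite) support simplex, $\i(\g,\ol{w}_\infty)\geq c>0$ uniformly. This contradicts $\mathrm{sys}([\ol{w}_\infty])=0$, proving necessity.

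\medskip

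\noindent\emph{Sufficiency of the bound.} Conversely, assume $K\subset|\Ao(S,X)|/\G(S,X)$ is closed and $\mathrm{sys}\geq\e$ on $K$. I want to show $K$ is compact. The strategy is to cover $|\Af(S,X)|/\G(S,X)$ — or rather the relevant part of it — by the images of finitely many closed simplices. Although $|\Af(S,X)|$ has infinitely many simplices, the mapping class group $\G(S,X)$ acts on $\Af(S,X)$ with finitely many orbits of simplices of each dimension (there are only finitely many topological types of arc systems on a fixed surface), so $|\Af(S,X)|/\G(S,X)$ is a finite CW-complex and in particular compact. The issue is that $K$ lies in the \emph{non-compact} open subset $|\Ao(S,X)|/\G(S,X)$, so I must show that $K$ stays away from the ``boundary at infinity'', i.e.\ from $|\A8(S,X)|/\G(S,X)$ and, more subtly, from the locus where the local finiteness fails and the metric topology differs from the standard one.

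\medskip

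\noindent The key point, and what I expect to be the main obstacle, is this: I claim that the condition $\mathrm{sys}(\ol{w})\geq\e$ confines $\ol{w}$, up to the $\G(S,X)$-action, to a \emph{finite} union of closed simplices of $|\Af(S,X)|$, and moreover bounds it away from the non-proper faces of those simplices. Granting this, $K$ is a closed subset of a finite union of closed simplices, hence compact. To establish the claim I would argue as follows. First, if $\ua$ is the support of $\ol{w}$ and $\ua$ does not quasi-fill, then $\ol{w}\in|\A8(S,X)|$, which has empty intersection with the locus $\mathrm{sys}\geq\e$ only after we also control weights — actually a non-proper $\ua$ can still have $\mathrm{sys}=0$ (take $\g$ inside the non-disc complementary region), so $\mathrm{sys}\geq\e$ already forces $\ua$ proper, i.e.\ $\ol{w}\in|\Ao(S,X)|$ with support a \emph{proper} simplex. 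Proper simplices have finite stabilizers and finite star, and there are finitely many $\G$-orbits of them; so modulo $\G$ the support $\ua$ ranges over a finite set. It remains to bound $\ol{w}$ away from the non-proper faces of each such simplex $|\ua|$: if some weights $\ol{w}(\a_i)$ tend to $0$, the support degenerates to a sub-arc-system $\ua'\subsetneq\ua$; if $\ua'$ fails to quasi-fill there is a curve $\g$ with $\i(\g,\ua')=0$, so $\i(\g,\ol{w})=\sum_{\a_i\in\ua\setminus\ua'}\i(\g,\a_i)\ol{w}(\a_i)\to 0$, again contradicting $\mathrm{sys}\geq\e$. A compactness/continuity argument on the finitely many simplices then yields a uniform $\d>0$ with $\ol{w}(\a_i)\geq\d$ for all $i$ whenever $\mathrm{sys}(\ol{w})\geq\e$. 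Thus the $\e$-systole locus is contained in a finite union of compact ``central'' sub-polytopes of proper simplices, and $K$, being a closed subset of this, is compact. The delicate bookkeeping is precisely in making ``$\g$ must cross $\ua$ when $\ua$ quasi-fills'' quantitative and uniform across the finitely many orbits, which is where I would spend the most care.
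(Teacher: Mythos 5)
Your necessity direction is essentially fine (and parallels the paper, which gets it even more cheaply from the bound $d(\ol{w},|\A8(S,X)|)\le \mathrm{sys}(\ol{w})$: a sequence with systole tending to $0$ approaches $|\A8(S,X)|/\G(S,X)$ and so leaves every compact subset of the open set $|\Ao(S,X)|/\G(S,X)$, no continuity of $\mathrm{sys}$ needed). The genuine problem is in your sufficiency direction, at the step ``a compactness/continuity argument then yields a uniform $\d>0$ with $\ol{w}(\a_i)\ge\d$ for all $i$ whenever $\mathrm{sys}(\ol{w})\ge\e$.'' That statement is false: if $\ua'\subsetneq\ua$ is a proper subsystem that still quasi-fills (delete from an ideal triangulation an arc whose removal keeps every complementary piece a disc or pointed disc), then points of $|\ua|$ approaching the face $|\ua'|$ have one weight tending to $0$ while their systole stays bounded away from $0$. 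Consequently the locus $\{\mathrm{sys}\ge\e\}$ is \emph{not} contained in your ``central sub-polytopes'' $\{\ol{w}(\a_i)\ge\d\ \forall i\}$, and points near proper faces are not recovered by the central sub-polytopes of the smaller simplices either (their support is too big to lie in those faces), so the proposed finite cover fails as stated.

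What is true, and all you need, is that $\{\mathrm{sys}\ge\e\}$ stays a definite distance from the \emph{non-proper} faces only. Your own local ingredient gives this: if $\ua'\subset\ua$ fails to quasi-fill, fix one essential non-peripheral $\g$ with $\i(\g,\ua')=0$ (when the bad complementary piece is an unmarked cylinder one must check its core is essential and non-peripheral, which holds); then $\i(\g,\ol{w})=\sum_{\a\in\ua\setminus\ua'}\i(\g,\a)\ol{w}(\a)\le C\sum_{\a\in\ua\setminus\ua'}\ol{w}(\a)$, and since there are finitely many cells modulo $\G(S,X)$ and finitely many faces per cell, the constant $C$ can be taken uniform. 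This is exactly the comparison the paper uses, $d(\ol{w},|\A8(S,X)|)\le\mathrm{sys}(\ol{w})\le \sqrt{N}\,d(\ol{w},|\A8(S,X)|)$: with it, $\{\mathrm{sys}\ge\e\}$ meets each of the finitely many closed cells inside the compact set obtained by removing an open neighborhood of $|\A8(S,X)|$, hence $\{\mathrm{sys}\ge\e\}\cap|\Ao(S,X)|/\G(S,X)$ lies in a compact subset of $|\Ao(S,X)|/\G(S,X)$, and a set $K$ closed in $|\Ao(S,X)|/\G(S,X)$ and contained in such a compact set is compact. So your architecture is the paper's, but the uniform lower bound on all weights must be replaced by this bound on the distance to the non-proper locus.
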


\begin{proof}
In $\R^N$ we easily have $\dis d_2\leq d_1\leq \sqrt{N}\cdot d_2$,
where $d_r$ is the $L^r$-distance. Similarly,
in $|\Af(S,X)|$ we have
\[
d(\ol{w},|\A8(S,X)|)\leq \mathrm{sys}(\ol{w})
\leq \sqrt{N} \cdot d(\ol{w},|\A8(S,X)|)
\]
where $N=6g-7+3n$. The same holds in $|\Af(S,X)|/\G(S,X)$.

Thus, if $[\ua]\in\Ao(S,X)/\G(S,X)$,
then $|\ua|\cap \mathrm{sys}^{-1}([\e,\infty))
\cap |\Ao(S,X)|/\G(S,X)$ is compact for
every $\e>0$.
As $|\Ao(S,X)|/\G(S,X)$ contains finitely many
cells, we conclude that $\mathrm{sys}^{-1}([\e,\infty))
\cap|\Ao(S,X)|/\G(S,X)$ is compact.

Vice versa, if $\mathrm{sys}:K\rar \R_+$ is not bounded
from below, then we can find a sequence $[\ol{w}_m]
\subset K$ such that $\mathrm{sys}(\ol{w}_m)\rar 0$.
Thus, $[\ol{w}_m]$ approaches $|\Af(S,X)|/\G(S,X)$
and so is divergent in $|\Ao(S,X)|/\G(S,X)$.
\end{proof}

\subsubsection{Boundary weight map.}

Let $\Delta_X$ be the standard simplex in $\R^X$.
The {\it boundary weight map}
$\ell_{\pa}:|\Af(S,X)|_{\R}\rar \Delta_X\times\R_+\subset \R^X$
is the piecewise-linear map that sends
$\{\a\}\mapsto [\ora{\a}]_{\infty}+[\ola{\a}]_{\infty}$.
The projective boundary weight map
$\frac{1}{2}\ell_{\pa}:|\Af(S,X)|\rar \Delta_X$ instead sends
$\{\a\}\mapsto \frac{1}{2}[\ora{\a}]_{\infty}+\frac{1}{2}
[\ola{\a}]_{\infty}$.

\subsubsection{Results on the arc complex.}

A few things are known about the topology of $|\Af(S,X)|$.

\begin{itemize}
\item[(a)]
The space of proper arc systems $|\Ao(S,X)|$ can be naturally
given the structure of piecewise-affine topological manifold
with boundary
(Hubbard-Masur \cite{hubbard-masur:foliations}, credited to
Whitney) of (real) dimension $6g-7+3n$.
\item[(b)]
The space $|\Ao(S,X)|$ is $\G(S,X)$-equivariantly
homeomorphic to $\Teich(S,X)\times\Delta_X$,
where $\Teich(S,X)$ is the Teichm\"uller space of $(S,X)$
(see \ref{sss:teichmueller} for definitions
and Section~\ref{sec:triangulation} for an extensive
discussion on this result), and so is
contractible. This result could also be probably
extracted from \cite{hubbard-masur:foliations}, but
it is first more explicitly stated in
Harer \cite{harer:virtual} (who attributes it to
Mumford and Thurston),
Penner \cite{penner:decorated} and Bowditch-Epstein
\cite{bowditch-epstein:natural}.
As the moduli space of $X$-marked
Riemann surfaces of genus $g$ can be obtained
as $\M_{g,X}\cong\Teich(S,X)/\G(S,X)$ (see \ref{sss:moduli}),
then $\M_{g,X}\simeq B\G(S,X)$ in the orbifold category.
\item[(c)] 
The space $|\A8(S,X)|$ is homotopy equivalent to
an infinite wedge of spheres of dimension $2g-3+n$
(Harer \cite{harer:virtual}).
\end{itemize}

Results (b) and (c) are the key step in the following.

\begin{theorem}[Harer \cite{harer:virtual}]
$\G(S,X)$ is a virtual duality group
(that is, it has a subgroup of finite index
which is a duality group)
of dimension $4g-4+n$ for $n>0$
(and $4g-5$ for $n=0$).
\end{theorem}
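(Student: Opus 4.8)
The plan is to deduce the duality statement from the standard machinery of Bieri--Eckmann duality groups, using the geometric input provided by results (b) and (c) on the arc complex. First I would recall that $\G(S,X)$ is virtually torsion-free (it contains finite-index subgroups acting freely on $\Teich(S,X)$, for instance by a level structure), so it suffices to prove that such a torsion-free finite-index subgroup $\G'$ is a duality group. By result (b), $\Teich(S,X)\simeq |\Ao(S,X)|$ is contractible and carries a free $\G'$-action, so $|\Ao(S,X)|/\G'$ is a model for $B\G'$; in particular $\G'$ has finite cohomological dimension and is a group of type $FP$, since $|\Ao(S,X)|$ is a manifold with boundary admitting a finite $\G$-CW structure (a cocompact equivariant cellulation coming from the arc complex, or the spine $|\Ao(S,X)'|$ of Proposition~\ref{prop:spine} and its higher-$n$ analogue).

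The heart of the argument is identifying the dualizing module and the cohomological dimension. Here I would invoke the Bieri--Eckmann criterion: a group $\G'$ of type $FP$ is a duality group of dimension $m$ if and only if $H^i(\G';\Z\G')=0$ for $i\ne m$ and $H^m(\G';\Z\G')$ is torsion-free. To compute these groups I would use the fact that, for $\G'$ acting freely cocompactly on the contractible manifold-with-boundary $|\Ao(S,X)|$, one has $H^i(\G';\Z\G')\cong H^i_c(|\Ao(S,X)|)$, the compactly supported cohomology of the space. By Lefschetz/Poincar\'e--Lefschetz duality on the $(6g-7+3n)$-dimensional manifold with boundary $|\Ao(S,X)|$, this compactly supported cohomology is expressed in terms of the homology of the boundary; and the relevant boundary, up to $\G$-homotopy, is $|\A8(S,X)|$, which by result (c) is homotopy equivalent to a wedge of spheres of dimension $2g-3+n$. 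Feeding the homotopy type of $|\A8(S,X)|$ into the long exact sequence of the pair $\big(|\Af(S,X)|,|\A8(S,X)|\big)$ — using that $|\Af(S,X)|$ is contractible (being a cone, or directly star-shaped) — shows $H^i_c(|\Ao(S,X)|)$ is concentrated in a single degree and is free abelian there. A dimension count then gives $(6g-7+3n)-(2g-3+n)-1 = 4g-5+n$ for the location of the reduced homology of the spherical part, and adjusting for the open versus closed manifold and the $\Delta_X$ factor of dimension $n-1$ from result (b) yields the cohomological dimension $4g-4+n$ when $n>0$.

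For the case $n=0$ the surface has no punctures, so the arc-complex construction does not directly apply; instead I would reduce to the punctured case via the Birman exact sequence. Puncturing once gives $1\to\pi_1(S\setminus\{pt\})\to\G(S,\{x_1\})\to\G(S,\emptyset)\to 1$ (for $g\ge 2$), in which the kernel is a surface group, hence a $2$-dimensional duality group (indeed a Poincar\'e duality group), and extensions of duality groups by duality groups are duality groups with dimensions adding; so $\G(S,\emptyset)$ is a duality group of dimension $(4g-4+1)-2 = 4g-5$. The exceptional small-genus cases where $2-2g-n\ge 0$ are excluded by hypothesis throughout.

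The main obstacle I expect is the precise identification of $H^i(\G';\Z\G')$ with the compactly supported cohomology of $|\Ao(S,X)|$ and the careful bookkeeping through Poincar\'e--Lefschetz duality relating the interior $|\Ao(S,X)|$, its ideal boundary $|\A8(S,X)|$, and the $\Delta_X$-direction: one must be sure that the manifold-with-boundary structure of (a), the homotopy equivalence of (b), and the spherical homotopy type of (c) are compatible enough that the spectral-sequence or long-exact-sequence computation really collapses to a single nonzero free group in degree $4g-4+n$, rather than merely bounding the cohomological dimension. Verifying torsion-freeness of the top group — equivalently that the "boundary at infinity" contributes no torsion — is exactly where result (c), giving a \emph{wedge of spheres} rather than a general spherical complex, is essential.
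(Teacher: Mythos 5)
Your route (Bieri--Eckmann criterion, arc complex, sphericity of $|\A8(S,X)|$) is the one Harer actually follows and that the paper sketches, but the central step is not justified as you state it. You derive $H^i(\G';\Z\G')\cong H^i_c\bigl(|\Ao(S,X)|\bigr)$ from the assertion that $\G'$ acts freely and \emph{cocompactly} on $|\Ao(S,X)|$. The action is proper but not cocompact: the quotient is $\M_{g,X}\times\Delta_X$, which is noncompact (this is exactly why the paper needs the compactness criterion of Lemma~\ref{lemma:compact}). Cocompactness holds for the spine $|\Ao(S,X)'|$ of Proposition~\ref{prop:spine}, which indeed gives type $FP$ and the upper bound on the dimension, but the spine is not a manifold, so Poincar\'e--Lefschetz duality cannot be applied to it; and since compactly supported cohomology is only a proper-homotopy invariant, you cannot transfer the computation from the spine to the manifold-with-boundary $|\Ao(S,X)|$ for free. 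On the other hand $|\Af(S,X)|$ is $\G$-cocompact but the action on $|\A8(S,X)|$ has infinite stabilizers, so it is not a proper model either. Bridging this tension --- by a proper equivariant retraction or a Borel--Serre-type bordification with compact quotient whose boundary is $\G$-homotopy equivalent to $|\A8(S,X)|$ --- is the technical heart of Harer's proof, and it is precisely what your proposal leaves out. Note also that $|\A8(S,X)|$ is not the manifold boundary of the PL manifold $|\Ao(S,X)|$ (it is disjoint from it), and that the contractibility of the full complex $|\Af(S,X)|$, which your long exact sequence needs, is a nontrivial theorem of Harer/Hatcher, not a consequence of its being ``a cone or star-shaped''.

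Two further points. First, the arithmetic: $(6g-7+3n)-(2g-3+n)-1=4g+2n-5$, not $4g-5+n$; the discrepancy is then silently absorbed into the unexplained ``adjustment'' for the $\Delta_X$ factor, which is exactly the bookkeeping (closed versus open simplex direction, placement of the boundary) that has to be carried out explicitly for the argument to pin the dimension at $4g-4+n$ rather than merely bound it. Second, in the $n=0$ reduction the Birman kernel for forgetting the puncture of a closed surface is $\pi_1(S)$, a Poincar\'e duality group of dimension $2$, not $\pi_1(S\setminus\{\text{pt}\})$, which is free of rank $2g$ and would give the wrong dimension; moreover the extension fact you invoke (kernel and quotient duality groups imply the extension is one) goes the wrong way for your purpose --- you need the Bieri--Eckmann statement that allows one to deduce duality for the \emph{quotient} of a duality group by a normal Poincar\'e duality subgroup, which is the result Harer cites for the closed case.
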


Actually, it is sufficient to work with $X=\{x_1\}$,
in which case the upper bound is given by (b)
and Proposition~\ref{prop:spine}, and
the duality by (c).

\subsection{Ribbon graphs}\label{ss:ribbon}
\subsubsection{Graphs.}

A {\it graph} $G$ is a triple $(E,\sim,\s_1)$, where $E$ is a finite
set, $\s_1:E\rar E$ is a fixed-point-free involution and $\sim$ is
an equivalence relation on $E$.

In ordinary language
\begin{itemize}
\item
$E$ is  the set of {\it oriented edges} of the graph
\item
$\s_1$ is the orientation-reversing involution of $E$,
so that the set of unoriented edges is $E_1:=E/\s_1$
\item
two oriented edges are equivalent if and only if they
come out from the same vertex, so that the set $V$ of
vertices is $E/\!\sim$ and the valence of $v\in E/\!\sim$
is exactly $|v|$.
\end{itemize}

A {\it ribbon graph} $\GG$ is a triple $(E,\s_0,\s_1)$,
where $E$ is a (finite) set, $\s_1:E\rar E$ is a fixed-point-free
involution and $\s_1:E\rar E$ is a permutation.
Define $\s_\infty:=\s_1\circ \s_0^{-1}$ and call $E_t$
the set of orbits of $\s_t$ and $[\cdot]_t:E\rar E_t$
the natural projection.
A disjoint union of two ribbon graphs is defined in
the natural way.

\begin{remark}
Given a ribbon graph $\GG$, the underlying ordinary graph
$G=\GG^{ord}$ is obtained by declaring that oriented edges in the same
$\s_0$-orbit are equivalent and forgetting about the
precise action of $\s_0$.
\end{remark}

\begin{center}
\begin{figurehere}
\psfrag{x}{$x_i$}
\psfrag{e}{$\ora{e}$}
\psfrag{s0e}{$\s_0(\ora{e})$}
\psfrag{s0}{$\s_0$}
\psfrag{s1e'}{$\ola{e'}=\s_1(\ora{e'})$}
\psfrag{Te'}{$T_{\ora{e'}}$}
\psfrag{e'}{$\ora{e'}$}
\psfrag{s8}{$\s_\infty$}
\psfrag{s8e'}{$\s_\infty(\ora{e'})$}
\includegraphics[width=0.7\textwidth]{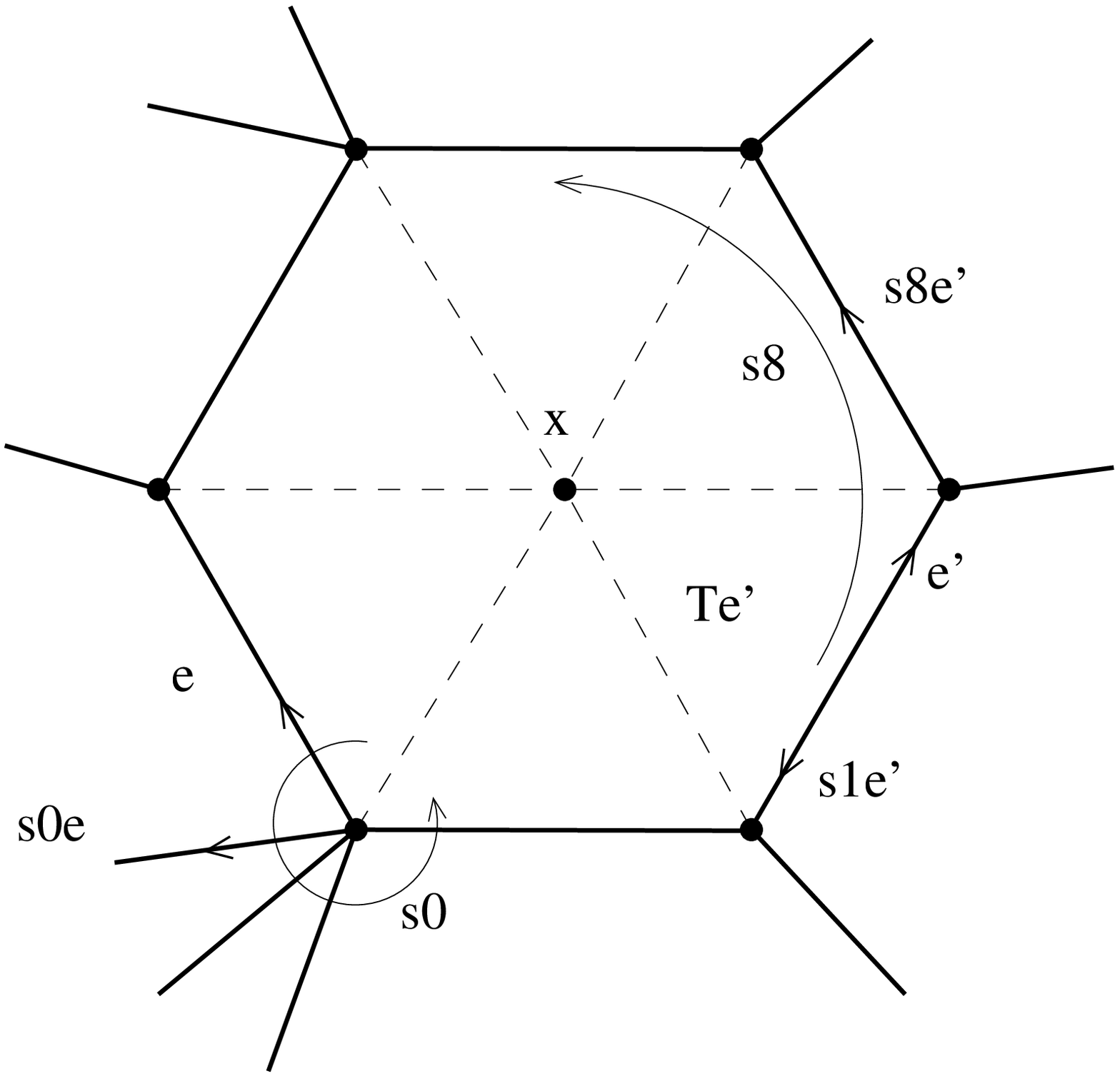}
\caption{Geometric representation of a ribbon graph}
\label{fig:ribbon}
\end{figurehere}
\end{center}

In ordinary language, a ribbon graph is an ordinary
graph endowed with a cyclic ordering of the oriented
edges outgoing from each vertex.

The $\s_\infty$-orbits are sometimes called {\it holes}.
A {\it connected component} of $\GG$ is an orbit
of $E(\GG)$ under the action of $\langle \s_0,\s_1 \rangle$.

The {\it Euler characteristic} of a ribbon graph $\GG$
is $\chi(\GG)=|E_0(\GG)|-|E_1(\GG)|$
and its {\it genus} is $g(\GG)=1+\frac{1}{2}
(|E_1(\GG)|-|E_0(\GG)|-|E_\infty(\GG)|)$.

A {\it (ribbon) tree} is a connected (ribbon) graph
of genus zero with one hole.

%
\subsubsection{Subgraphs and quotients.}

Let $\GG=(E,\s_0,\s_1)$ be a ribbon graph and let $Z\subsetneq E_1$
be a nonempty subset of edges.

The {\it subgraph $\GG_Z$} is given by $(\tilde{Z},\s^Z_0,\s^Z_1)$,
where $\tilde{Z}=Z\times_{E_1} E$ and $\s^Z_0,\s^Z_1$ are the induced
operators (that is, for every $e\in\tilde{Z}$ we define
$\s^Z_0(e)=\s_0^k(e)$,
where $k=\mathrm{min}\{k>0\,|\,\s_0^k(e)\in\tilde{Z}\}$).

Similarly, the {\it quotient $\GG/Z$} is $(\GG\setminus\tilde{Z},\s^{Z^c}_0,
\s^{Z^c}_1)$, where $\s^{Z^c}_1$ and $\s^{Z^c}_{\infty}$ are the operators
induced on $E\setminus\tilde{Z}$ and $\s^{Z^c}_0$ is defined accordingly.
A {\it new vertex} of $\GG/Z$ is a $\s^{Z^c}_0$-orbit of
$E \setminus\tilde{Z}\hra\GG$, which is not a $\s_0$-orbit.

\subsubsection{Bicolored graphs.}

A {\it bicolored graph} $\zeta$ is
a finite connected graph with a partition $V=V_+\cup V_-$
of its vertices.
We say that $\zeta$ is {\it reduced} if no two vertices of $V_-$
are adjacent. If not differently specified, we will always
understand that bicolored graphs are reduced.

If $\zeta$ contains an edge $z$ that joins $w_1,w_2\in V_-$, then
we can obtain a new graph $\zeta'$
{\it merging} $w_1$ and $w_2$ along $z$ into a new vertex
$w'\in V'_-$ (by simply forgetting
$\ora{z}$ and $\ola{z}$ and by declaring that vertices outgoing
from $w_1$ are equivalent to vertices outgoing from $w_2$).

If $\zeta$ comes equipped with a function $g:V_-\rar\N$, then
$g':V'_-\rar \N$ is defined so that $g'(w')=g(w_1)+g(w_2)$ if
$w_1\neq w_2$, or $g'(w')=g(w_1)+1$ if $w_1=w_2$.

As merging reduces the number of edges, we can iterate the process
only a finite number of times. The result is independent of
the choice of which edges to merge first and is a reduced
graph $\zeta^{red}$ (possibly with a $g^{red}$).

\begin{center}
\begin{figurehere}
\psfrag{t1}{$t_1$}
\psfrag{t2}{$t_2$}
\psfrag{t3}{$t_3$}
\psfrag{t4}{$t_4$}
\psfrag{s5}{$s_5$}
\psfrag{s6}{$s_6$}
\psfrag{0}{$0$}
\psfrag{3}{$3$}
\psfrag{1}{{\color{White}\boldmath$1$}}
\psfrag{2}{{\color{White}\boldmath$2$}}
\psfrag{x1}{$x_1$}
\psfrag{x2}{$x_2$}
\psfrag{x3}{$x_3$}
\includegraphics[width=0.7\textwidth]{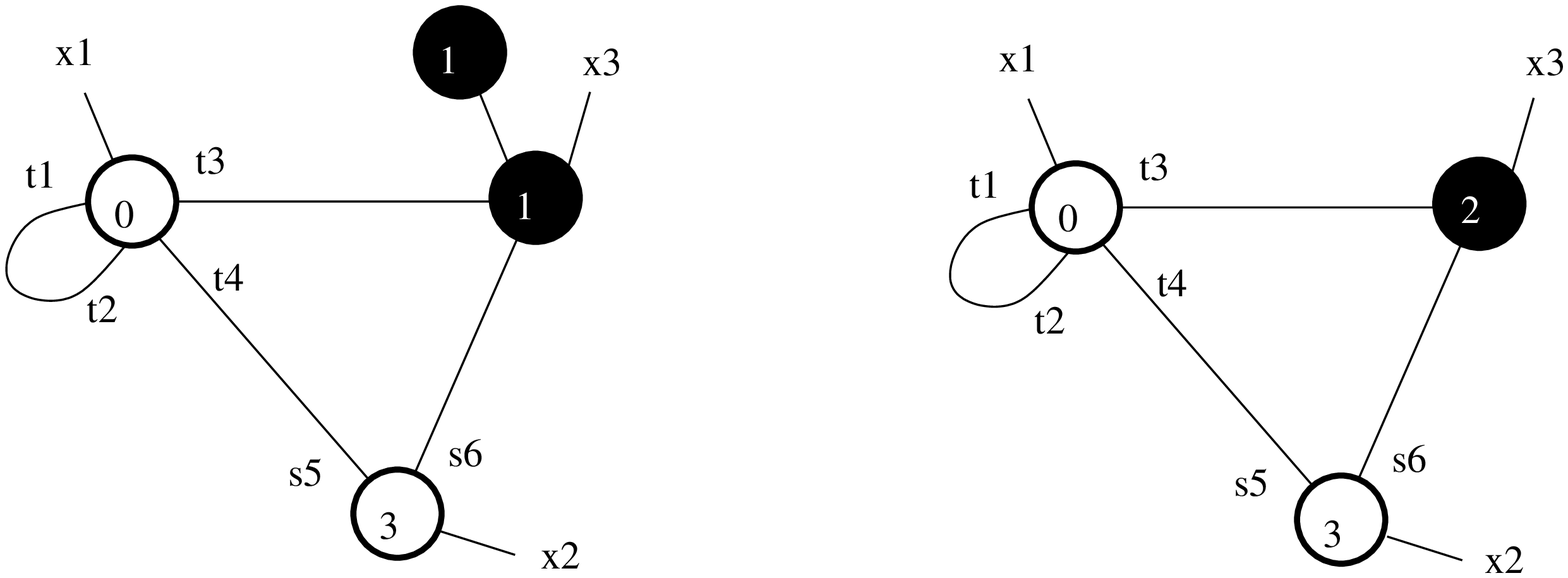}
\caption{A non-reduced bicolored graph (on the left) and
its reduction (on the right). Vertices in $V_-$ are black.}
\label{fig:bicolored}
\end{figurehere}
\end{center}

\subsubsection{Enriched ribbon graphs.}\label{sss:enriched}

An {\it enriched $X$-marked ribbon graph $\GG^{en}$} is the datum of
\begin{itemize}
\item
a connected bicolored graph $(\zeta,V_+)$
\item
a ribbon graph $\GG$ plus a bijection
$V_+\rar \{\text{connected components of $\GG$}\}$
\item
an (invisible) genus function $g:V_-\rar \N$
\item
a map $\dis m:X \rar 
V_-\cup E_{\infty}(\GG)$
such that the restriction
$m^{-1}(E_\infty(\GG))\rar
E_\infty(\GG)$ is bijective
\item
an injection
$s_v:\{\text{oriented edges of $\zeta$ outgoing from $v$}\}\rar E_0(\GG_v)$
(vertices of $\GG_v$ in the image are called {\it special})
for every $v\in V_+$
\end{itemize}
that satisfy the following properties:
\begin{itemize}
\item
for every $v\in V_+$ and $y\in E_0(\GG_v)$
we have $|m^{-1}(y)\cup s_v^{-1}(y)|\leq 1$
(i.e. no more than one marking or one node
at each vertex of $\GG_v$)
\item
$2g(v)-2+|\{\text{oriented edges of $\zeta$ outgoing from $v$}\}|+$\\
$+|\{\text{marked points on $v$} \}|> 0$
for every $v\in V$ (stability condition)
\item
every non-special vertex of $\GG_v$ must be at least trivalent
for all $v\in V_+$.
\end{itemize}
We say that $\GG^{en}$ is {\it reduced} if $\zeta$ is.

If the graph $\zeta$ is not reduced, then
we can merge two vertices of $\zeta$
along an edge of $\zeta$ and obtain a new enriched
$X$-marked ribbon graph.
$\GG^{en}_1$ and $\GG^{en}_2$ are considered equivalent
if they are related by a sequence of merging operations.
It is clear that each equivalence class can be identified
to its reduced representative.
Unless differently specified, we will always refer
to an enriched graph as the canonical reduced representative.

The total {\it genus} of $\GG^{en}$ is
$g(\GG^{en})=1-\chi(\zeta)+\sum_{v\in V_+} g(\GG_v)
+\sum_{w\in V_-}g(w)$.

\begin{example}
In Figure~\ref{fig:bicolored}, the genus
of each vertex is written inside, $x_1$ and $x_2$ are
marking the two holes of $\GG$
(sitting in different components),
whereas $x_3$ is an invisible marked point.
Moreover, $t_1,t_2,t_3,t_4$ (resp. $s_5,s_6$)
are distinct (special) vertices
of the visible component of genus $0$ (resp. of genus $3$).
The total genus of the associated $\GG^{en}$ is $7$.
\end{example}

\begin{remark}
If an edge $z$ of $\zeta$ joins $v\in V_+$ and $w\in V_-$
and this edge is marked by the special vertex $y\in E_0(\GG_v)$,
then we will say, for brevity, that $z$ joins $w$ to $y$. 
\end{remark}

An enriched $X$-marked ribbon graph is {\it nonsingular}
if $\zeta$ consists of a single visible vertex.
Equivalently, an enriched nonsingular
$X$-marked ribbon graph consists of a connected
ribbon graph $\GG$
together with an injection $X \hra E_\infty(\GG)\cup E_0(\GG)$,
whose image is exactly $E_\infty(\GG)\cup\{\text{special vertices}\}$,
such that non-special vertices are at least trivalent
and $\chi(\GG)-|\{\text{marked vertices}\}|<0$.

\subsubsection{Category of nonsingular ribbon graphs.}

A {\it morphism} of nonsingular $X$-marked
ribbon graphs $\GG_1 \rar \GG_2$
is an injective map $f:E(\GG_2)\hra E(\GG_1)$
such that
\begin{itemize}
\item
$f$ commutes with $\s_1$, $\s_\infty$ and respects the $X$-marking
\item
$\GG_{1,Z}$ is a disjoint union of trees,
where $Z=E_1(\GG_1)\setminus E_1(\GG_2)$.
\end{itemize}
Notice that, as $f$ preserves the $X$-markings (which are
{\it injections} $X\hra E_\infty(\GG_i)\cup E_0(\GG_i)$),
then each component of $Z$ may contain at most one special vertex.

Vice versa,
if $\GG$ is a nonsingular $X$-marked ribbon graph
and $\emptyset\neq Z\subsetneq E_1(\GG)$ such that
$\GG_Z$ is a disjoint union of trees (each one
containing at most a special vertex), then
the inclusion $f:E_1(\GG)\setminus\tilde{Z}\hra E_1(\GG)$ induces
a morphism of nonsingular ribbon graphs $\GG\rar \GG/Z$.

\begin{remark}
A morphism is an {\it isomorphism} if and only
if $f$ is bijective.
\end{remark}

$\mathfrak{RG}_{X,ns}$ is the small category whose objects are
nonsingular $X$-marked ribbon
graphs $\GG$ (where we assume that $E(\GG)$ is contained
in a fixed countable set) with the morphisms defined above.
We use the symbol $\mathfrak{RG}_{g,X,ns}$ to denote the
full subcategory of ribbon graphs of genus $g$.

\subsubsection{Topological realization of nonsingular ribbon graphs.}
\label{sss:ns-ribbon_graphs}

A topological realization $|G|$ of the graph $G=(E,\sim,\s_1)$
is the one-dimensional CW-complex
obtained from $\dis I\times E$
(where $I=[0,1]$) by identifying
\begin{itemize}
\item
$(t,\ora{e})\sim (1-t,\ola{e})$ for all $t\in I$ and $\ora{e}\in E$
\item
$(0,\ora{e})\sim (0,\ora{e'})$ whenever $e\sim e'$.
\end{itemize}

A {\it topological realization} $|\GG|$
of the nonsingular $X$-marked
ribbon graph $\GG=(E,\s_0,\s_1)$
is the oriented surface
obtained from $\dis T\times E$
(where $T=I\times[0,\infty]/I\times\{\infty\}$) by identifying
\begin{itemize}
\item
$(t,0,\ora{e})\sim (1-t,0,\ola{e})$
for all $\ora{e}\in E$
\item
$(1,y,\ora{e})\sim (0,y,s_{\infty}(\ora{e}))$
for all $\ora{e}\in E$ and $y\in [0,\infty]$.
\end{itemize}
If $G$ is the ordinary graph underlying $\GG$,
then there is a natural embedding $|G|\hra |\GG|$,
which we call the {\it spine}.

The points at infinity in $|\GG|$ are called {\it centers}
of the holes and can be identified to $E_\infty(\GG)$.
Thus, $|\GG|$ is naturally an $X$-marked surface.

Notice that a morphism of nonsingular
$X$-marked ribbon graphs $\GG_1\rar \GG_2$
induces an isotopy class of orientation-preserving
homeomorphisms $|\GG_1|\rar |\GG_2|$ that respect the $X$-marking.

\subsubsection{Nonsingular $(S,X)$-markings.}

An {\it $(S,X)$-marking} of the nonsingular $X$-marked
ribbon graph $\GG$
is an orientation-preserving
homeomorphism $f:S\rar |\GG|$, compatible with
$X\hra E_\infty(\GG)\cup E_0(\GG)$.

Define $\mathfrak{RG}_{ns}(S,X)$ to be the category whose objects are
$(S,X)$-marked nonsingular
ribbon graphs $(\GG,f)$ and whose morphisms
$(\GG_1,f_1)\rar (\GG_2,f_2)$ are morphisms $\GG_1\rar \GG_2$
such that $S\arr{f_1}{\lra} |\GG_1| \rar |\GG_2|$ is homotopic
to $f_2: S\rar |\GG_2|$.

As usual, there is a right action of
the mapping class group $\G(S,X)$ on $\mathfrak{RG}_{ns}(S,X)$
and $\mathfrak{RG}_{ns}(S,X)/\G(S,X)$ is equivalent to
$\mathfrak{RG}_{g,X,ns}$.

\subsubsection{Nonsingular arcs/graph duality.}\label{sss:ns-duality}

Let $\ua=\{\a_0,\dots,\a_k\}\in \Ao(S,X)$ be a proper arc system
and let $\s_0,\s_1,\s_\infty$ the corresponding operators on the
set of oriented arcs $E(\ua)$.
The {\it ribbon graph dual to $\ua$} is $\GG_{\ua}=(E(\ua),\s_0,\s_1)$,
which comes naturally equipped with an $X$-marking
(see~\ref{sss:sigma-arc}).

Define the $(S,X)$-marking
$f:S\rar |\GG|$ in the following way. Fix a point $c_v$ in each
component $v$ of $S\setminus \ua$ (which must be exactly the
marked point, if the component is a pointed disc) and let
$f$ send it to the corresponding vertex $v$ of $|\GG|$.
For each arc $\a_i\in\ua$, consider a transverse path
$\beta_i$
from $c_{v'}$ to $c_{v''}$
that joins the two components $v'$ and $v''$ separated by $\a_i$,
intersecting $\a_i$ exactly once, in such a way that 
$\beta_i\cap \beta_j=\emptyset$ if $i\neq j$.
Define $f$ to be a homeomorphism of $\beta_i$ onto
the oriented edge in $|\GG|$ corresponding to $\a_i$ that runs
from $v'$ to $v''$.

\begin{center}
\begin{figurehere}
\psfrag{x1}{$x_1$}
\psfrag{x2}{$x_2$}
\psfrag{S}{$S$}
\psfrag{c1}{{\color{Blue}$c_{v'}$}}
\psfrag{c2}{{\color{Blue}$c_{v''}$}}
\psfrag{ai}{{\color{Red}$\a_i$}}
\psfrag{bi}{{\color{Blue}$\beta_i$}}
\includegraphics[width=0.7\textwidth]{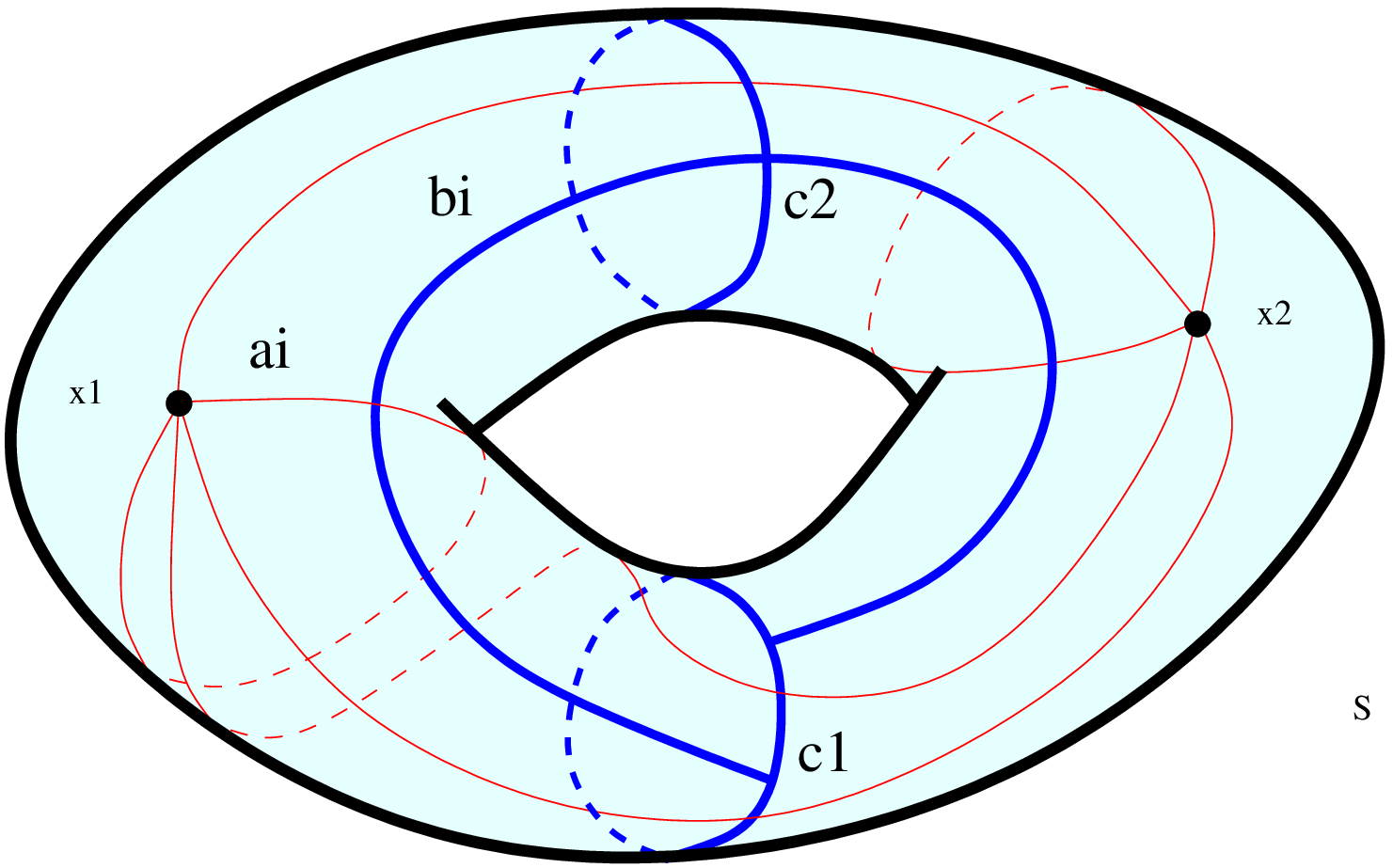}
\caption{Thick curves represent $f^{-1}(|G|)$ and thin ones
their dual arcs.}
\end{figurehere}
\end{center}

Because all components of $S\setminus\ua$
are discs (or pointed discs), it is easy to see that there is
a unique way of extending $f$ to a homeomorphism (up to isotopy).

\begin{proposition}
The association above defines
a $\G(S,X)$-equivariant equivalence of categories
\[
\wh{\Ao}(S,X) \lra \mathfrak{RG}_{ns}(S,X)
\]
where $\wh{\Ao}(S,X)$ is the category of proper arc systems,
whose morphisms are reversed inclusions.
\end{proposition}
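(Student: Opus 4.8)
The plan is to construct the equivalence by exhibiting a functor in each direction and checking that they are mutually quasi-inverse, while tracking the $\G(S,X)$-action throughout. The functor $F:\wh{\Ao}(S,X)\to\mathfrak{RG}_{ns}(S,X)$ is already essentially given: to a proper arc system $\ua$ it assigns the dual ribbon graph $\GG_{\ua}=(E(\ua),\s_0,\s_1)$ with the $(S,X)$-marking $f$ constructed above (well-defined up to isotopy since the components of $S\setminus\ua$ are discs or pointed discs). I first need to check this is functorial: a morphism in $\wh{\Ao}$ is a \emph{reversed} inclusion $\ua'\hookleftarrow\ua$, i.e.\ $\ua\subseteq\ua'$, so $E(\ua)\subseteq E(\ua')$ and the inclusion $f:E_1(\GG_{\ua})\hookrightarrow E_1(\GG_{\ua'})$ should define a morphism $\GG_{\ua'}\to\GG_{\ua}$ of nonsingular ribbon graphs. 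The content here is to verify that $Z=E_1(\GG_{\ua'})\setminus E_1(\GG_{\ua})$ has topological realization a disjoint union of trees, each with at most one special vertex: this is exactly the statement that collapsing the arcs in $\ua'\setminus\ua$ inside $S$ (equivalently, looking at the complementary regions) produces contractible pieces, which follows from the fact that $S\setminus\ua$ refines $S\setminus\ua'$ into discs/pointed discs. One must also check compatibility with the $\s$-operators and the $X$-marking, which is immediate from the definitions in \ref{sss:sigma-arc}, and compatibility with the markings $f$, $f'$ up to homotopy.

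Next I would construct the quasi-inverse $G:\mathfrak{RG}_{ns}(S,X)\to\wh{\Ao}(S,X)$. Given an $(S,X)$-marked nonsingular ribbon graph $(\GG,f)$, the spine $|G|\hookrightarrow|\GG|$ pulls back under $f$ to an embedded graph in $S$ whose complement consists of discs and pointed discs (the punctured discs being neighborhoods of the centers of holes that are marked, the unpunctured ones around special vertices); dually, one picks for each edge of $|\GG|$ an arc in $S$ crossing it transversally once and disjoint from the others, obtaining a proper arc system $\ua_{\GG,f}$. One then checks this is independent of choices up to homotopy — the arc transverse to a given edge is determined up to isotopy once we know which two complementary regions it separates — and that a morphism $\GG_1\to\GG_2$ (collapsing a forest $Z$) goes to the reversed inclusion $\ua_{\GG_2,f_2}\subseteq\ua_{\GG_1,f_1}$, using the second axiom in the definition of morphism. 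Equivariance of both $F$ and $G$ under $\G(S,X)$ is built in, since the mapping class group acts on arc systems by $\a\mapsto\a\circ g$ and on $(S,X)$-marked graphs by precomposition of the marking, and the dual-graph construction intertwines these.

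Finally I would verify $G\circ F\cong\mathrm{id}$ and $F\circ G\cong\mathrm{id}$ by exhibiting natural isomorphisms. Starting from $\ua$, forming $\GG_{\ua}$, and then taking arcs transverse to the edges of $\GG_{\ua}$ recovers a system isotopic to $\ua$ — this is the key point that the transverse path $\beta_i$ to the edge dual to $\a_i$ is isotopic to $\a_i$ itself, which follows because $\beta_i$ and $\a_i$ bound a bigon inside the union of the two complementary regions adjacent to $\a_i$. Conversely, starting from $(\GG,f)$, the dual of the transverse arc system is canonically isomorphic to $\GG$, with the markings matching up to the prescribed homotopy by the uniqueness-of-extension statement recorded just before the proposition. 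The main obstacle I anticipate is not any single step but the bookkeeping: being careful about the treatment of pointed discs versus discs (hence of $E_\infty$-marked points versus special vertices), and about the ``at most one special vertex per tree'' condition in the definition of morphism, so that the functors land in the right categories and the reversed-inclusion direction of morphisms is respected. Everything else reduces to the standard fact that an embedded graph in a surface with disc/pointed-disc complement is, up to isotopy, the same data as its Poincar\'e-dual arc system, together with the identifications of $\s_0,\s_1,\s_\infty$ already set up in \ref{sss:sigma-arc}.
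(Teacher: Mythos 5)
Your proposal follows essentially the same route as the paper: the functor is the dual ribbon graph construction with its $(S,X)$-marking, inclusions of proper arc systems give morphisms of ribbon graphs, and the pseudo-inverse is obtained by pulling back the spine $|G|\subset|\GG|$ and taking the arcs dual to its edges; you merely spell out the functoriality, equivariance and quasi-inverse checks that the paper leaves implicit (and note that the refinement goes the other way: $S\setminus\ub$ refines the components of $S\setminus\ua$ when $\ua\subset\ub$). No substantive difference or gap.
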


In fact, an inclusion $\ua\hra \ub$ of proper systems induces a
morphism $\GG_{\ub}\rar \GG_{\ua}$ of nonsingular $(S,X)$-marked
ribbon graphs.

A pseudo-inverse is constructed as follows.
Let $f:S\rar |\GG|$ be a nonsingular
$(S,X)$-marked ribbon graph and let $|G|\hra |\GG|$
be the spine. The graph $f^{-1}(|G|)$ decomposes $S$ into a disjoint
union of one-pointed discs. For each edge $e$ of $|G|$, let $\a_e$
be the simple arc joining the points in the two discs separated by $e$.
Thus, we can associate the system of arcs $\{\a_e\,|\, e\in E_1(\GG)\}$
to $(\GG,f)$ and this defines a pseudo-inverse
$\mathfrak{RG}_{ns}(S,X) \lra \wh{\Ao}(S,X)$.

\subsubsection{Metrized nonsingular ribbon graphs.}\label{sss:metrized}

A {\it metric} on a ribbon graph $\GG$ is a map
$\ell:E_1(\GG)\rar \R_+$.
Given a simple closed curve $\g\in\mathcal{C}(S,X)$
and an $(S,X)$-marked nonsingular ribbon graph $f:S\rar|\GG|$,
there is a unique simple closed
curve $\tilde{\g}=|e_{i_1}|\cup\dots\cup |e_{i_k}|$
contained inside $|G|\subset |\GG|$
such that $f^{-1}(\tilde{\g})$ is freely homotopic to $\g$.

If $\GG$ is metrized, then we can define the {\it length}
$\ell(\g)$ to be $\ell(\tilde{\g})=\ell(e_{i_1})+\dots+\ell(e_{i_k})$.
Consequently, the {\it systol} is given by
$\mathrm{inf}\{\ell(\g)\,|\,\g\in\mathcal{C}(S,X)\}$.

Given a proper weighted arc system $w\in|\Ao(S,X)|_{\R}$,
supported on $\ua\in \Ao(S,X)$, we can endow the corresponding
ribbon graph $\GG_{\ua}$ with a {\it metric}, by simply setting
$\ell(\a_i)=w(\a_i)$. Thus, one can extend the
correspondence to proper weighted arc systems and
metrized $(S,X)$-marked nonsingular ribbon graphs.
Moreover, the notions of length and systol agree with
those given in \ref{sss:compact}.

Notice the similarity between Lemma~\ref{lemma:compact}
and Mumford-Mahler criterion for compactness in $\M_{g,n}$.

\subsubsection{Category of enriched ribbon graphs.}

An {\it isomorphism} of enriched $X$-marked
ribbon graphs $\GG^{en}_1 \rar \GG^{en}_2$
is the datum of compatible
isomorphisms of their (reduced) graphs $c:\zeta_1\rar \zeta_2$
and of the ribbon graphs $\GG_1\rar \GG_2$,
such that $c(V_{1,+})=V_{2,+}$
and they respect the rest of the data.

Let $\GG^{en}$ be an enriched $X$-marked ribbon graph
and let $e\in E_1(\GG_v)$, where $v\in V_+$.
Assume that $|V_+|>1$ or that $|E_1(\GG_v)|>1$.
We define $\GG^{en}/e$ in the following way.
\begin{itemize}
\item[(a)]
If $e$ is the only edge of $\GG_v$, then we
just turn $v$ into an invisible component
and we define $g(v):=g(\GG_v)$ and $m(x_i)=v$
for all $x_i\in X$ that marked a hole or a vertex
of $\GG_v$.
In what follows, suppose that $|E_1(\GG_v)|>1$.
\item[(b)]
If $[\ora{e}]_0$ and $[\ola{e}]_0$ are distinct and not both special,
then we obtain $\GG^{en}/e$ from $\GG^{en}$ by simply replacing
$\GG_v$ by $\GG_v/e$.
\item[(c)]
If $[\ora{e}]_0=[\ola{e}]_0$ not special, then replace
$\GG_v$ by $\GG_v/e$. If $\{\ora{e}\}$ was a hole marked
by $x_j$, then mark the new vertex of $\GG_v/e$ by $x_j$.
Otherwise, add an edge to $\zeta$
that joins the two new vertices of $\GG_v/e$
(which may or may not split into two visible components).
\item[(d)]
In this last case, add a new invisible component $w$ of
genus $0$ to $\zeta$,
replace $\GG_v$ by $\GG_v/e$
(if $\GG_v/e$ is disconnected, the vertex $v$ splits)
and join $w$ to the new vertices
(one or two) of $\GG_v/e$ and to the old edges
$s_v^{-1}([\ora{e}]_0)\cup s_v^{-1}([\ola{e}]_0)$.
Moreover, if $\{\ora{e}\}$ was a hole
marked by $x_j$, then mark $w$ by $x_j$.
\end{itemize}
Notice that $\GG^{en}/e$ can be not reduced, so we may want
to consider the reduced enriched graph
$\widetilde{\GG^{en}/e}$ associated to it. 
We define $\GG^{en}\rar \widetilde{\GG^{en}/e}$ to be an
{\it elementary contraction}.

$X$-marked enriched ribbon graphs form a (small) category
$\mathfrak{RG}_X$, whose morphisms are compositions of
isomorphisms and elementary contractions.
Call $\mathfrak{RG}_{g,X}$ the full subcategory of
$\mathfrak{RG}_X$ whose objects are ribbon graphs of genus $g$.

\begin{remark}
Really, the automorphism group of an enriched ribbon graph
must be defined as the product of the automorphism group
as defined above by
$\dis\prod_{v\in V_-} \Aut(v)$, where $\Aut(v)$ is the group
of automorphisms of the generic Riemann surface of type
$(g(v),n(v))$
(where $n(v)$ is the number of oriented edges of $\zeta$ outgoing
from $v$). Fortunately, $\Aut(v)$ is almost always trivial,
except if $g(v)=n(v)=1$, when $\Aut(v)\cong \Z/2\Z$.
\end{remark}

\subsubsection{Topological realization of enriched ribbon graphs.}

The {\it topological realization} of the
enriched $X$-marked ribbon graph $\GG^{en}$ is
the nodal $X$-marked oriented surface $|\GG^{en}|$
obtained as a quotient of
\[
\left(\coprod_{v\in V_+} |\GG_v|\right)
\coprod \left( \coprod_{w\in V_-} S_w \right)
\]
by a suitable equivalence relation, where
$S_w$ is 
a compact oriented surface of genus $g(w)$
with marked points given by
$m^{-1}(w)$ and by the oriented
edges of $\zeta$ outgoing from $w$.
The equivalence relation identifies couples
of points (two special vertices of $\GG$
or a special vertex on a visible component
and a point on an invisible one) corresponding
to the same edge of $\zeta$.

As in the nonsingular case, for each $v\in V_+$
the positive component $|\GG_v|$ naturally
contains an embedded {\it spine} $|G_v|$.
Notice that there is an obvious correspondence
between edges of $\zeta$ and nodes of $|\GG^{en}|$.

Moreover, the elementary contraction
$\GG^{en}\rar \GG^{en}/e$ to the non-reduced $\GG^{en}/e$
defines a unique homotopy class of maps
$|\GG^{en}|\rar |\GG^{en}/e|$, which may shrink a circle
inside a positive component of $|\GG^{en}|$ to a point
(only in cases (c) and (d)), and which are homeomorphisms elsewhere.

If $\widetilde{\GG^{en}/e}$ is the reduced graph associated
to $\GG^{en}/e$, then we also have a
map $|\widetilde{\GG^{en}/e}|\rar |\GG^{en}/e|$
that shrinks some circles inside the invisible components
to points and is a homeomorphism elsewhere.

\[
\xymatrix@R=0.3in{
|\GG^{en}| \ar[rd] && |\widetilde{\GG^{en}/e}| \ar[ld] \\
& |\GG^{en}/e|
}
\]

\subsubsection{$(S,X)$-markings of $\GG^{en}$.}

An {\it $(S,X)$-marking} of an enriched $X$-marked
ribbon graph $\GG^{en}$
is a map
$f:S\rar |\GG^{en}|$ compatible with $X\hra E_\infty(\GG)
\cup E_0(\GG)$
such that
$f^{-1}(\{\text{nodes}\})$ is a disjoint union of circles
and $f$ is an orientation-preserving homeomorphism elsewhere.
The subsurface $S_+:=f^{-1}(|\GG|\setminus
\{\text{special points}\})$ is the {\it visible subsurface}.

An {\it isomorphism} of $(S,X)$-marked (reduced)
enriched ribbon graphs
is an isomorphism $\GG^{en}_1\rar \GG^{en}_2$ such that
$S\arr{f_1}{\lra}|\GG^{en}_1|\rar |\GG^{en}_2|$ is homotopic
to $f_2:S\rar |\GG^{en}_2|$.

Given $(S,X)$-markings $f:S\rar |\GG^{en}|$ and
$f':S\rar |\widetilde{\GG^{en}/e}|$ such that
$S\arr{f}{\lra} |\GG^{en}|\rar |\GG^{en}/e|$
is homotopic to $S\arr{f'}{\lra} |\widetilde{\GG^{en}/e}|
\rar |\GG^{en}/e|$, then we define
$(\GG^{en},f)\rar (\widetilde{\GG^{en}/e},f')$ to be an
{\it elementary contraction} of $(S,X)$-marked enriched
ribbon graphs.

Define $\mathfrak{RG}(S,X)$ to be the category whose objects are
(equivalence classes of) $(S,X)$-marked enriched
ribbon graphs $(\GG^{en},f)$ and whose morphisms
are compositions of isomorphisms and elementary contractions.

Again, the mapping class group $\G(S,X)$ acts
on $\mathfrak{RG}(S,X)$ and the quotient $\mathfrak{RG}(S,X)/
\G(S,X)$ is equivalent to $\mathfrak{RG}_{g,X}$.

\subsubsection{Arcs/graph duality.}\label{sss:arc-graph}

Let $\ua=\{\a_0,\dots,\a_k\}\in \Ao(S,X)$ be an arc system
and let $\s_0,\s_1,\s_\infty$ the corresponding operators on the
set of oriented arcs $E(\ua)$.

Define $V_+$ to be the set of connected components of $S(\ua)_+$
and $V_-$ the set of components of $S(\ua)_-$.
Let $\zeta$ be a graph whose vertices are $V=V_+\cup V_-$ and
whose edges correspond to connected components
of $S\setminus(S(\ua)_+\cup
S(\ua)_-)$, where an edge connects $v$ and $w$
(possibly $v=w$) if the associated component bounds $v$ and $w$.

Define $g:V_-\rar\N$ to be the genus function associated to
the connected components of $S(\ua)_-$.

Call $S_v$ the subsurface associated to $v\in V_+$
and let $\hat{S}_v$ be the quotient of $\ol{S}_v$ obtained by
identifying each component of $\pa S_v$ to a point.
As $\ua\cap \hat{S}_v$ quasi-fills $\hat{S}_v$, we can
construct a dual ribbon graph $\GG_v$ and a homeomorphism
$\hat{S}_v\rar |\GG_v|$ that sends $\pa S_v$ to special
vertices of $|\GG_v|$
and marked points on $\hat{S}_v$ to centers of $|\GG_v|$.
These homeomorphisms glue to give a map $S\rar |\GG^{en}|$
that shrinks circles and cylinders in
$S\setminus (S(\ua)_+\cup S(\ua)_-)$ to nodes and is
a homeomorphism elsewhere, which is thus homotopic to
a marking of $|\GG^{en}|$.

We have obtain an enriched $(S,X)$-marked
(reduced) ribbon graph $\GG^{en}_{\ua}$ {\it dual to $\ua$}.

\begin{proposition}
The construction above defines a $\G(S,X)$-equivariant
equivalence of categories
\[
\wh{\Af}(S,X) \lra \mathfrak{RG}(S,X)
\]
where $\wh{\Af}(S,X)$ is the category of proper arc systems,
whose morphisms are reversed inclusions.
\end{proposition}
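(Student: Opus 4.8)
The plan is to promote the nonsingular arcs/graph duality of Section~\ref{sss:ns-duality} to the enriched setting by decomposing an arbitrary proper arc system $\ua$ according to its visible and invisible parts, applying the nonsingular correspondence visible-component by visible-component, and then checking that the two natural operations---taking a reversed inclusion $\ua\hookrightarrow\ub$ of proper arc systems on one side, and an elementary contraction of enriched $(S,X)$-marked ribbon graphs on the other---match up under the construction $\ua\mapsto\GG^{en}_{\ua}$ already described before the statement. Concretely, I would organize the proof into the following steps.

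\textbf{Step 1: The functor is well-defined on objects.} The paragraph preceding the statement already exhibits, for each proper $\ua$, an enriched $(S,X)$-marked reduced ribbon graph $\GG^{en}_{\ua}$: one sets $V_+$ to be the components of $S(\ua)_+$, $V_-$ the components of $S(\ua)_-$, edges of $\zeta$ to be the connecting cylinders/annular pieces of $S\setminus(S(\ua)_+\cup S(\ua)_-)$, and builds each $\GG_v$ via the nonsingular duality on $\hat S_v$ (where $\ua\cap\hat S_v$ quasi-fills $\hat S_v$). I would check that the data $(s_v, m, g)$ satisfy the axioms in \ref{sss:enriched}: the stability condition on each $v\in V$ follows from $2-2g-n<0$ on $S$ together with the hypothesis that invisible components are not unmarked cylinders; the ``at most one marking/node per vertex'' condition is immediate from the fact that distinct boundary circles of $S_v$ give distinct special vertices of $\GG_v$; and trivalence of non-special vertices is the usual fact that a quasi-filling arc system cuts a surface into polygons with at least three sides unless there is a marked point inside. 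Functoriality on morphisms: given a reversed inclusion $\ub\twoheadrightarrow\ua$ (i.e.\ $\ua\subseteq\ub$ with both proper), I would show it induces a composition of elementary contractions $\GG^{en}_{\ub}\to\GG^{en}_{\ua}$ by removing the arcs of $\ub\setminus\ua$ one at a time and matching each removal against one of the four cases (a)--(d) in the definition of $\GG^{en}/e$; this is precisely where the bookkeeping of cases (c) and (d)---an arc both of whose sides lie in the same visible component, possibly disconnecting it or creating a genus-zero invisible bubble---gets used, and it is the reason the target category was set up with exactly those four cases.

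\textbf{Step 2: Construct a pseudo-inverse.} Mimicking \ref{sss:ns-duality}, given an $(S,X)$-marked enriched ribbon graph $f\colon S\to|\GG^{en}|$, restrict $f$ over the visible subsurface $S_+=f^{-1}(|\GG|\setminus\{\text{special points}\})$; on each component this is a nonsingular $(S_v,X_v)$-marked ribbon graph, to which I apply the nonsingular pseudo-inverse to get a quasi-filling arc system on $\hat S_v$, hence (pulling back the spine) a proper arc system on $S_v$. Taking the union over $v\in V_+$ gives an arc system $\ua$ on $S$ with $S(\ua)_+=S_+$ and $S(\ua)_-$ recovering the invisible components; this defines $\mathfrak{RG}(S,X)\to\wh{\Af}(S,X)$ on objects, and an elementary contraction maps to a reversed inclusion by reversing the case analysis of Step~1. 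That the two composites are naturally isomorphic to the respective identities reduces, componentwise, to the nonsingular statement (the spine of $f^{-1}(|G_v|)$ decomposes $S_v$ into once-punctured discs, uniquely up to isotopy), plus the observation that $V_-$, the genus function $g$, and the marking $m$ on invisible points are literally preserved by both constructions. Finally $\G(S,X)$-equivariance is immediate since every step---thickening arcs, picking spines, cutting into polygons---is natural in orientation-preserving homeomorphisms of $S$ fixing $X$, and morphisms in both categories are defined up to homotopy commuting with the marking.

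\textbf{Main obstacle.} The routine-but-delicate part, and the one I would spend the most care on, is Step~1's matching of a single arc-removal against cases (a)--(d): one must verify that removing an arc $\a$ from a \emph{proper} $\ub$ so that $\ub\setminus\{\a\}$ is still proper forces exactly the local picture of one of those four cases (in particular that the ``bad'' possibility, where removing $\a$ makes the system non-proper, cannot occur), and that when the resulting enriched graph is non-reduced one correctly passes to $\widetilde{\GG^{en}/e}$ so that the association lands in the reduced representatives. Everything else is a componentwise bootstrap from the nonsingular equivalence already proved, together with the stability and trivalence axioms being stable under the operations involved.
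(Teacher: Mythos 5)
Your proposal follows essentially the same route as the paper: the paper defines $\GG^{en}_{\ua}$ exactly via the decomposition into $S(\ua)_+$ and $S(\ua)_-$ with the nonsingular duality applied to each $\hat{S}_v$, remarks that an inclusion $\ua\hookrightarrow\ub$ induces a morphism $\GG^{en}_{\ub}\rar\GG^{en}_{\ua}$, and builds the pseudo-inverse componentwise by lifting the dual arc systems $\ua_v$ from $\hat{S}_v$ back to $S$, leaving the case-by-case matching with the contractions (a)--(d) implicit, so your extra bookkeeping only fills in what the paper takes for granted. One clarification: despite the statement's wording (inherited from the nonsingular proposition), $\wh{\Af}(S,X)$ is meant to comprise all arc systems, not just proper ones, so the ``bad possibility'' you single out --- an arc removal destroying properness --- is not something to exclude but precisely what cases (a), (c), (d) and the invisible components are designed to record.
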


As before,
an inclusion $\ua\hra \ub$ of systems of arcs induces a
morphism $\GG^{en}_{\ub}\rar \GG^{en}_{\ua}$
of nonsingular $(S,X)$-marked enriched
ribbon graphs.

To construct a pseudo-inverse, start with $(\GG^{en},f)$
and call $\hat{S}_v$ the surface obtained from
$f^{-1}(|\GG_v|)$ by shrinking each boundary circle
to a point. By nonsingular duality,
we can construct a system of arcs $\ua_v$ inside $\hat{S}_v$ dual to
$f_v:\hat{S}_v \rar |\GG_v|$. As the arcs miss the vertices
of $f_v^{-1}(|G_v|)$ by construction, $\ua_v$ can
be lifted to $S$.
The wanted arc system on $S$ is
$\ua=\bigcup_{v\in V_+}\ua_v$.

\subsubsection{Metrized enriched ribbon graphs.}
A metric on $\GG^{en}$ is a map $\ell:E_1(\GG)\rar\R_+$.
Given $\g\in\mathcal{C}(S,X)$ and an $(S,X)$-marking
$f:S\rar |\GG^{en}|$, we can define $\g_+:=\g\cap S_+$.
As in the nonsingular case, there is a unique $\tilde{\g}_+
=|e_{i_1}|\cup\dots\cup |e_{i_k}|$ inside $|G|\subset |\GG|$
such that $f^{-1}(\tilde{\g}_+)\simeq \g_+$.

Hence, we can define $\ell(\g):=\ell(\g_+)=\ell(e_{i_1})+
\dots+\ell(e_{i_k})$. Clearly, $\ell(\g)=i(\g,w)$, where $w$
is the weight function supported on the arc system dual
to $(\GG^{en},f)$.
Thus, the arc-graph duality also establishes a correspondence
between weighted arc systems on $(S,X)$
and metrized $(S,X)$-marked enriched ribbon graphs.

\section{Differential and algebro-geometric point of view}\label{sec:alg}
\subsection{The Deligne-Mumford moduli space.}\label{ss:deligne-mumford}
\subsubsection{The Teichm\"uller space.}\label{sss:teichmueller}

Fix a compact oriented surface $S$ of genus $g$
and a subset $X=\{x_1,\dots,x_n\}\subset S$ such that $2g-2+n>0$.

A {\it smooth family} of $(S,X)$-marked Riemann surfaces is
a commutative diagram
\[
\xymatrix@R=0.3in{
B \times S \ar[rr]^f \ar[rrd] && \mathcal{C} \ar[d]^\pi \\
&& B
}
\]
where $f$ is an relatively (over $B$) oriented diffeomorphism,
$B\times S \rar B$ is the projection on the first factor and
the fibers $\mathcal{C}_b$
of $\pi$ are Riemann surfaces, whose complex
structure varies smoothly with $b\in B$.

Two families $(f_1,\pi_1)$ and $(f_2,\pi_2)$ over $B$
are {\it isomorphic} if there exists a continuous
map $h:\mathcal{C}_1 \rar \mathcal{C}_2$ such that
\begin{itemize}
\item
$h_b\circ f_{1,b}:S \rar \mathcal{C}_{2,b}$ is homotopic to $f_{2,b}$
for every $b\in B$
\item
$h_b:\mathcal{C}_{1,b}\rar\mathcal{C}_{2,b}$ is biholomorphic
for every $b\in B$.
\end{itemize}

The functor $\Teich(S,X):(\text{manifolds})\rar (\text{sets})$
defined by
\[
B \mapsto \left\{\substack{\dis\text{smooth families of $(S,X)$-marked}\\
\dis\text{Riemann surfaces over $B$}}\right\}/\text{iso}
\]
is represented by the
{\it Teichm\"uller space} $\Teich(S,X)$.

It is a classical result
that $\Teich(S,X)$ is a complex-analytic
manifold of (complex) dimension $3g-3+n$
(Ahlfors \cite{ahlfors:complex}, Bers
\cite{bers:simultaneous} and Ahlfors-Bers
\cite{ahlfors-bers:riemann})
and is diffeomorphic to a ball
(Teichm\"uller \cite{teichmueller:collected}).

\subsubsection{The moduli space of Riemann surfaces.}\label{sss:moduli}

A {\it smooth family} of $X$-marked Riemann surfaces of genus $g$ is
\begin{itemize}
\item
a submersion $\pi:\mathcal{C}\rar B$
\item
a smooth embedding $s:X\times B\rar \mathcal{C}$
\end{itemize}
such that the fibers $\mathcal{C}_b$
are Riemann surfaces of genus $g$, whose
complex structure varies smoothly in $b\in B$,
and $s_{x_i}:B\rar\mathcal{C}$
is a section for every $x_i\in X$.

Two families $(\pi_1,s_1)$ and $(\pi_2,s_2)$ over $B$
are isomorphic if
there exists a diffeomorphism $h:\mathcal{C}_1\rar\mathcal{C}_2$
such that $\pi_2\circ h=\pi_1$, the restriction of $h$ to each
fiber $h_b:\mathcal{C}_{1,b}\rar\mathcal{C}_{2,b}$ is a biholomorphism
and $h\circ s_1=s_2$.

The existence of Riemann surfaces with nontrivial automorphisms
(for $g\geq 1$) prevents the functor
\[
\M_{g,X}:
\xymatrix@R=0in{
(\text{manifolds}) \ar[r] & (\text{sets}) \\
B \ar@{|->}[r] & \left\{\substack{\dis
\text{smooth families of $X$-marked}\\
\dis\text{Riemann surfaces over $B$}}\right\}/\text{iso}
}
\]
from being representable. However, Riemann surfaces
with $2g-2+n>0$
have finitely many automorphisms and so $\M_{g,X}$ is actually
represented by an orbifold, which is in fact $\Teich(S,X)/\G(S,X)$
(in the orbifold sense). In the algebraic category, we would rather
say that $\M_{g,X}$ is a Deligne-Mumford stack with quasi-projective
coarse space.

\subsubsection{Stable curves.}

Enumerative geometry is traditionally reduced to intersection
theory on suitable moduli spaces. In our case, $\M_{g,X}$ is not a
compact orbifold. To compactify it in an algebraically meaningful way,
we need to look at how algebraic families of complex projective
curves can degenerate.

In particular, given a holomorphic family $\mathcal{C}^*\rar \Delta^*$
of algebraic curves over the punctured disc,
we must understand how to complete
the family over $\Delta$.

\begin{example}
Consider the family $\mathcal{C}^*=\{(b,[x:y:z])\in\Delta^*\times\C
\mathbb{P}^2\,|\,
y^2 z=x(x-bz)(x-2z)\}$ of curves of genus $1$ with the marked point
$[2:0:1]\in \C\mathbb{P}^2$, parametrized by $b\in\Delta^*$.
Notice that the projection $\mathcal{C}^*_b\rar \C\mathbb{P}^1$
given by $[x:y:z]\mapsto [x:z]$
(where $[0:1:0]\mapsto [1:0]$)
is a $2:1$ cover, branched
over $\{0,b,2,\infty\}$.
Fix a $\ol{b}\in\Delta^*$ and
consider a closed curve $\g\subset \C\mathbb{P}^1$
that separates $\{\ol{b},2\}$
from $\{0,\infty\}$ and pick one of the two (simple
closed) lifts $\tilde{\g}\subset \mathcal{C}^*_{\ol{b}}$.

This $\tilde{\g}$ determines a nontrivial
element of $H_1(\mathcal{C}^*_{\ol{b}})$.
A quick analysis tells us that
the endomorphism
$T:H_1(\mathcal{C}^*_{\ol{b}})\rar H_1(\mathcal{C}^*_{\ol{b}})$
induced by the monodromy
around a generator of $\pi_1(\Delta^*,\ol{b})$ is nontrivial.
Thus, the family $\mathcal{C}^*\rar\Delta^*$ cannot be completed
over $\Delta$ as smooth family (because it would have
trivial monodromy).
\end{example}

If we want to compactify our moduli space, we must allow
our curves to acquire some singularities.
Thus, it makes no longer sense
to ask them to be submersions. Instead, we will 
require them to be {\it flat}.

Given an open subset $0\in B\subset \C$,
a flat family of connected projective
curves $\mathcal{C}\rar B$
may typically look like (up to shrinking $B$)
\begin{itemize}
\item
$\Delta\times B\rar B$ around a smooth point
of $\mathcal{C}_0$
\item
$\{(x,y)\in\C^2\,|\, xy=0\}\times B\rar B$ around
a node of $\mathcal{C}_0$
that persists on each $\mathcal{C}_b$
\item
$\{(b,x,y)\in B\times\C^2\,|\, xy=b\}\rar B$
around a node of $\mathcal{C}_0$
that does not
persist on the other curves $\mathcal{C}_b$
with $b\neq 0$
\end{itemize}
in local analytic coordinates.

Notice that the (arithmetic) genus of each fiber
$g_b=1-\frac{1}{2}[\chi(\mathcal{C}_b)-\nu_b]$
is constant in $b$.

To prove that allowing nodal curves is enough to
compactify $\M_{g,X}$, one must show
that it is always possible to complete any family $\mathcal{C}^*\rar
\Delta^*$ to a family over $\Delta$.
However, because nodal curves may have nontrivial automorphisms,
we shall consider also the case in which $0\in\Delta$ is an orbifold
point. Thus, it is sufficient to be able to complete
not exactly the family $\mathcal{C}^*\rar\Delta^*$ but
its pull-back under a suitable map $\Delta^*\rar\Delta^*$
given by $z\mapsto z^k$. This is exactly the {\it semi-stable
reduction theorem}.

One can observe that it is always possible to avoid producing
genus $0$ components with $1$ or $2$ nodes. Thus, we can consider
only {\it stable curves}, that is nodal projective (connected)
curves such that all irreducible components have finitely
many automorphisms (equivalently, no irreducible
component is a sphere
with less than three nodes/marked points).

The {\it Deligne-Mumford compactification} $\Mbar_{g,X}$
of $\M_{g,X}$ is the moduli space of $X$-marked stable curves
of genus $g$, which is 
a compact orbifold (algebraically, a Deligne-Mumford
stack with projective coarse moduli space).

Its underlying topological space is a projective variety
of complex dimension $3g-3+n$.

\subsection{The system of moduli spaces of curves}\label{ss:system}
\subsubsection{Boundary maps.}

Many facts suggest that one should not look at the moduli
spaces of $X$-pointed genus $g$
curves $\Mbar_{g,X}$ each one separately, but one
must consider the whole system $(\Mbar_{g,X})_{g,X}$.
An evidence is given by the existence
of three families of maps that relate different moduli spaces.
\begin{enumerate}
\item
The {\it forgetful map} is a
projective flat morphism
\[
\pi_q: \Mbar_{g,X\cup\{q\}} \lra \Mbar_{g,X}
\]
that forgets the point $q$ and stabilizes the curve
(i.e. contracts a possible two-pointed sphere).
This map can be identified to
the universal family and so is endowed with
natural sections
\[
\vartheta_{0,\{x_i,q\}}: \Mbar_{g,X} \rar \Mbar_{g,X\cup\{q\}}
\]
for all $x_i\in X$.
\item
The {\it boundary map} corresponding to irreducible curves is
the finite map
\[
\vartheta_{irr}: \Mbar_{g-1,X\cup\{x',x''\}} \lra \Mbar_{g,X}
\]
(defined for $g>0$) that glues $x'$ and $x''$ together. It is
generically $2:1$ and
its image sits in the boundary of $\Mbar_{g,X}$.
\item
The {\it boundary maps} corresponding to reducible curves are
the finite maps
\[
\vartheta_{g',I}:\Mbar_{g',I\cup\{x'\}} \times \Mbar_{g-g',I^c\cup\{x''\}}
\lra \Mbar_{g,X}
\]
(defined for every $0\leq g'\leq g$ and $I\subseteq X$
such that the spaces involved are nonempty)
that take two curves and glue them together identifying $x'$ and $x''$.
They are generically $1-1$ (except in the case
$g=2g'$ and $X=\emptyset$, when
the map is generically $2:1$) and
their images sit in the boundary of $\Mbar_{g,X}$ too.
\end{enumerate}
Let $\delta_{0,\{x_i,q\}}$ be the Cartier divisor in $\Mbar_{g,X\cup\{q\}}$
corresponding to the image of the tautological section
$\vartheta_{0,\{x_i,q\}}$ and call $D_q:=\sum_i \delta_{0,\{x_i,q\}}$.
%
\subsubsection{Stratification by topological type.}

We observe that $\Mbar_{g,X}$ has a natural {\it stratification}
by topological type of the complex curve. In fact, we can
attach to every stable curve $\Si$ its
{\it dual graph} $\zeta_{\Si}$, whose vertices $V$
correspond to irreducible components
and whose edges correspond to nodes of $\Si$.
Moreover, we can define a genus function $g:V\rar \N$
such that $g(v)$ is the genus of the normalization
of the irreducible component corresponding to $v$
and a marking function $m:X \rar V$ (determined
by requiring that $x_i$ is marking a point on the
irreducible component corresponding to $m(x_i)$).
Equivalently, we will also say that
the vertex $v\in V$ is {\it labeled} by
$(g(v),X_v:=m^{-1}(v))$.
Call $Q_v$ the singular points of $\Si_v$.

For every such labeled graph $\zeta$, we can construct a
boundary map
\[
\vartheta_{\zeta}:\prod_{vin V} \Mbar_{g_v,X_v\cup Q_v} \lra \Mbar_{g,X}
\]
which is a finite morphism.

\subsection{Augmented Teichm\"uller space}\label{ss:augmented}
\subsubsection{Bordifications of $\Teich(S,X)$.}
Fix $S$ a compact oriented surface of genus $g$ and let
$X=\{x_1,\dots,x_n\}\subset S$ such that $2g-2+n>0$.

It is natural to look for natural
{\it bordifications} of $\Teich(S,X)$: that is, we look for a space
$\ol{\Teich}(S,X)\supset \Teich(S,X)$ that contains $\Teich(S,X)$
as a dense subspace and such that
the action of the mapping class group
$\G(S,X)$ extends to $\ol{\Teich}(S,X)$.

A remarkable example is given by Thurston's compactification
$\ol{\Teich}^{Th}(S,X)=\Teich(S,X)\cup \mathbb{P}\mathcal{ML}(S,X)$,
in which points at infinity are projective measured lamination
with compact support in $S\setminus X$.
Thurston showed that $\mathbb{P}\mathcal{ML}(S,X)$ is compact
and homeomorphic to a sphere. As $\G(S,X)$ is infinite and discrete,
this means that the quotient
$\ol{\Teich}^{Th}(S,X)/\G(S,X)$ cannot be too good and so
this does not sound like a convenient way to compactify $\M_{g,X}$.

We will see in Section~\ref{sec:triangulation} that
$\Teich(S,X)$ can be identified to $|\Ao(S,X)|$.
Thus, another remarkable example will be given by $|\Af(S,X)|$.

A natural question is how to define a bordification
$\ol{\Teich}(S,X)$ such that $\ol{\Teich}(S,X)/\G(S,X)\cong\Mbar_{g,X}$.

\subsubsection{Deligne-Mumford augmentation.}

A {\it (continuous)
family of stable $(S,X)$-marked curves} is a diagram
\[
\xymatrix@R=0.3in{
B \times S \ar[rr]^f \ar[rrd] && \mathcal{C} \ar[d]^\pi \\
&& B
}
\]
where $B\times S \rar B$ is the projection on the first factor
and
\begin{itemize}
\item
the family $\pi$ is obtained as a pull-back of
a flat stable family of $X$-marked curves
$\mathcal{C}'\rar B'$ through a continuous map
$B\rar B'$
\item
if $N_b\subset \mathcal{C}_b$ is the subset of nodes,
then $f^{-1}(\nu)$ is a smooth loop in $S\times\{b\}$
for every $\nu\in N_b$
\item
for every $b\in B$ the restriction
$f_b:S\setminus f^{-1}(N_b) \rar
\mathcal{C}_b\setminus N_b$ is an orientation-preserving
homeomorphism, compatible with the $X$-marking.
\end{itemize}
Isomorphisms of such families are defined in the obvious way.

\begin{example}
A way to construct such families is to start with
a flat family $\mathcal{C}'\rar\Delta$ such that
$\mathcal{C}'_b$ are all homeomorphic for $b\neq 0$.
Then consider the path $B=[0,\e)\subset \Delta$
and call $\mathcal{C}:=\mathcal{C}'\times_\Delta B$.
Over $(0,\e)$, the family $\mathcal{C}$ is topologically
trivial, whereas $\mathcal{C}_0$ may contain some new nodes.

Consider a marking $S \rar \mathcal{C}_{\e/2}$ that pinches
circles to nodes, is an oriented homeomorphism elsewhere and is
compatible with $X$.
The map $S\times (0,\e)\rar \mathcal{C}_{\e/2}\times(0,\e)
\arr{\sim}{\lra}\mathcal{C}$
extends $S\times [0,\e)\rar \mathrm{Bl}_{\mathcal{C}_0}\mathcal{C}
\rar \mathcal{C}$, which is the wanted $(S,X)$-marking.
\end{example}

The {\it Deligne-Mumford augmentation} of $\Teich(S,X)$
is the topological space $\ol{\Teich}^{DM}(S,X)$ that 
classifies families of stable $(S,X)$-marked curves.

It follows easily that $\ol{\Teich}^{DM}(S,X)/\G(S,X)=
\Mbar_{g,X}$ as topological spaces. However,
$\ol{\Teich}^{DM}(S,X)\rar\Mbar_{g,X}$ has infinite
ramification at $\pa^{DM}\Teich(S,X)$, due to the Dehn
twists around the pinched loops.

\subsubsection{Hyperbolic length functions.}

Let $[f:S\rar \Si]$ be a point of $\Teich(S,X)$.
As $\chi(S\setminus X)=2-2g-n<0$, the uniformization
theorem provides a universal cover
$\mathbb{H}\rar \Si\setminus f(X)$, which endows $\Si\setminus f(X)$
with a hyperbolic metric of finite volume, with cusps at $f(X)$.

In fact, we can interpret $\Teich(S,X)$ as the classifying
space of $(S,X)$-marked families of hyperbolic surfaces.
It is clear that continuous variation of the complex structure
corresponds to continuous variation of the hyperbolic metric
(uniformly on the compact subsets, for instance),
and so to continuity
of the holonomy map $H:\pi_1(S\setminus X)\times\Teich(S,X)
\rar \mathrm{PSL}_2(\R)$.

In particular, for every $\g\in\pi_1(S\setminus X)$
the function $\ell_\g:\Teich(S,X)\rar \R$
that associates to $[f:S\rar \Si]$ the length of the unique
geodesic in the free homotopy class $f_*\g$ is continuous.
As $\cosh(\ell_\g/2)=|\mathrm{Tr}(H_{\g}/2)|$, one can check
that $H$ can be reconstructed from sufficiently (but finitely)
many length functions. So that the continuity of these
is equivalent to the continuity of the family.

\subsubsection{Fenchel-Nielsen coordinates.}

Let $\ug=\{\g_1,\dots,\g_{N}\}$
be a maximal system of disjoint
simple closed curves of $S\setminus X$
(and so $N=3g-3+n$)
such that no $\g_i$ is contractible in $S\setminus X$
or homotopic to a puncture
and no couple $\g_i,\g_j$ bounds a cylinder contained in $S\setminus X$.

The system $\ug$ induces a {\it pair of pants decomposition} of $S$,
that is $S\setminus(\g_1\cup\dots\cup\g_{N})=P_1\cup P_2\cup\dots\cup
P_{2g-2+n}$,
and each $P_i$ is a pair of pants
(i.e. a surface of genus $0$ with $\chi(P_i)=-1$).

Given $[f:S\rar\Si]\in\Teich(S,X)$, we have {\it lengths}
$\ell_i(f)=\ell_{\g_i}(f)$ for $i=1,\dots,N$, which
determine the hyperbolic type of all pants $P_1,\dots,P_{2g-2+n}$.
The information about how the pants are glued together is
encoded in the {\it twist parameters} $\tau_i=\tau_{\g_i}\in \R$,
which are well-defined up to some choices.
What is important is that, whatever choices we make,
the difference $\tau_i(f_1)-\tau_i(f_2)$ is the same and it is
well-defined.

The {\it Fenchel-Nielsen coordinates} $(\ell_i,\tau_i)_{i=1}^{N}$
exhibit a real-analytic
diffeomorphism $\Teich(S,X)\arr{\sim}{\lra}(\R_+\times\R)^{N}$
(which clearly depends on the choice of $\ug$).

\subsubsection{Fenchel-Nielsen coordinates around nodal curves.}

Points of $\pa^{DM}\Teich(S,X)$ are $(S,X)$-marked
stable curves or, equivalently (using the uniformization
theorem componentwise), $(S,X)$-marked
hyperbolic surfaces with nodes, i.e. homotopy classes
of maps $f:S\rar\Si$, where $\Si$ is a hyperbolic surface
with nodes $\nu_1,\dots,\nu_k$, the fiber $f^{-1}(\nu_j)$ is a simple
closed curve $\g_j$ and $f$ is an oriented diffeomorphism
outside the nodes.

Complete $\{\g_1,\dots,\g_k\}$ to a maximal set $\ug$ of simple closed
curves in $(S,X)$ and consider the associated Fenchel-Nielsen
coordinates $(\ell_j,\tau_j)$ on $\Teich(S,X)$.
As we approach the point $[f]$, the holonomies $H_{\g_1},\dots,H_{\g_k}$
tend to parabolics and so
the lengths $\ell_1,\dots,\ell_k$
tend to zero. In fact, the hyperbolic metric on surface $\Si$
has a pair of cusps at each node $\nu_j$.

This shows that the lengths functions $\ell_1,\dots,\ell_k$ extend
to zero at $[f]$ with continuity. On the other hand, the twist
parameters $\tau_1(f),\dots,\tau_k(f)$ make no longer sense.

If we look at what happens on $\Mbar_{g,X}$, we may notice
that the couples $(\ell_j,\tau_j)_{j=1}^k$ behave like polar
coordinate around $[\Si]$, so that is seems natural to set
$\vartheta_m=2\pi\tau_m/\ell_m$
for all $m=1,\dots,N$
and define consequently a map
$F_{\ug}:(\R^2)^{N}\rar \Mbar_{g,X}$,
that associates to $(\ell_1,\vartheta_1,\dots,\ell_N,\vartheta_{N})$
the surface with Fenchel-Nielsen coordinates $(\ell_m,
\tau_m=\ell_m\vartheta_m/2\pi)$.
Notice that the map is well-defined, because a twist along $\g_j$
by $\ell_j$ is a diffeomorphism of the surface (a Dehn twist).

The map $F_{\ug}$ is an orbifold cover
$F_{\ug}:\R^{2N}\rar F_{\ug}(\R^{2N})\subset\Mbar_{g,X}$
and its image contains $[\Si]$. 
Varying $\ug$, we can cover the whole $\Mbar_{g,X}$ and thus
give it a {\it Fenchel-Nielsen smooth structure}.

The bad news, analyzed by Wolpert \cite{wolpert:geometry},
is that the Fenchel-Nielsen smooth structure is different
(at $\pa\M_{g,X}$) from the Deligne-Mumford one.
In fact, if a boundary divisor is locally described by $\{z_1=0\}$,
then the length $\ell_\g$
of the corresponding vanishing geodesic is related
to $z_1$ by $|z_1|\approx \exp(-1/\ell_\g)$, which shows that
the identity map
$\Mbar^{FN}_{g,X}\rar \Mbar^{DM}_{g,X}$ is Lipschitz, but
its inverse it not H\"older-continuous.

\subsubsection{Weil-Petersson metric.}

Let $\Si$ be a Riemann surface of genus $g$ with marked points
$X\hra \Si$ such that $2g-2+n>0$.
First-order deformations of the complex structure
can be rephrased in terms of $\ol{\pa}$ operator
as $\ol{\pa}+\e\mu\pa+o(\e)$, where
the {\it Beltrami differential} $\mu\in \Omega^{0,1}(T_\Si(-X))$
can be locally written as
$\dis \mu(z)\frac{d\ol{z}}{dz}$ with respect to some
holomorphic coordinate $z$ on $\Si$ and $\mu(z)$ vanishes at $X$.

Given a smooth vector field $\dis V=V(z)\frac{\pa}{\pa z}$
on $\Si$ that vanishes at $X$,
the deformations induced by $\mu$ and $\mu+\ol{\pa}V$
differ only by an isotopy of $\Si$ generated by $V$
(which fixes $X$).

Thus, the {\it tangent space} $T_{[\Si]}\M_{g,X}$
can be identified to $H^{0,1}(\Si,T_\Si(-X))$. As a consequence,
the {\it cotangent space} $T^{\vee}_{[\Si]}\M_{g,X}$ identifies
to the space $\mathcal{Q}(\Si,X)$
of integrable
holomorphic quadratic differentials on $\Si\setminus X$,
that is, which are allowed to have a simple pole at each $x_i\in X$.
The duality between $T_{[\Si]}\M_{g,X}$ and $T^{\vee}_{[\Si]}\M_{g,X}$
is given by
\[
\xymatrix@R=0in{
H^{0,1}(\Si,T_{\Si}(-X))\times H^0(\Si,K_{\Si}^{\otimes 2}(X)) \ar[rr] && \C \\
(\mu,\varphi) \ar@{|->}[rr] && \dis\int_{\Si} \mu\varphi
}
\]
If $\Si\setminus X$ is given the hyperbolic metric $\lambda$, then elements
in $H^{0,1}(\Si,T_\Si(-X))$ can be identified to the space of {\it harmonic
Beltrami differentials} $\mathcal{H}(\Si,X)=\{\ol{\varphi}/\lambda\,|
\,\varphi\in\mathcal{Q}(\Si,X)\}$.

The {\it Weil-Petersson Hermitean metric} $h=g+i\omega$
(defined by Weil \cite{weil:onthemoduli} using Petersson's
pairing of modular forms) is
\[
h(\mu,\nu):=\int_{\Si} \mu \ol{\nu}\cdot\lambda
\]
for $\mu,\nu \in \mathcal{H}(\Si,X)\cong T_{\Si}\M_{g,X}$.

This metric has a lot of properties: it is K\"ahler (Weil
\cite{weil:onthemoduli} and  Ahlfors
\cite{ahlfors:someremarks})
and it is mildly divergent at $\pa\M_{g,X}$, so that
the Weil-Petersson distance extends to a non-degenerate
distance on $\Mbar_{g,X}$ and
all points of $\pa\M_{g,X}$ are at finite distance (Masur
\cite{masur:extension} , Wolpert \cite{Wolpert:diameter}).

Because $\Mbar_{g,X}$ is compact and so WP-complete,
the lifting of the Weil-Petersson metric on
to $\ol{\Teich}(S,X)$ is also complete.
Thus, $\ol{\Teich}(S,X)$ can be seen as the
{\it Weil-Petersson completion} of $\Teich(S,X)$.

\subsubsection{Weil-Petersson form.}

We should emphasize that the Weil-Petersson symplectic
form $\omega_{WP}$ depends more directly
on the hyperbolic metric on the surface than on its
holomorphic structure.

In particular, Wolpert \cite{wolpert:symplectic}
has shown that
\[
\omega_{WP}=\sum_i d\ell_i\wedge d\tau_i
\]
on $\Teich(S,X)$, where $(\ell_i,\tau_i)$ are Fenchel-Nielsen
coordinates associated to any pair of pants decomposition of $(S,X)$.

On the other hand, if we identify $\Teich(S,X)$ with an
open subset of
$\mathrm{Hom}(\pi_1(S\setminus X),\mathrm{SL}_2(\R))/\mathrm{SL}_2(\R)$,
then points of $\Teich(S,X)$ are associated
$\mathfrak{g}$-local systems $\rho$ on $S\setminus X$ (with parabolic
holonomies at $X$ and hyperbolic holonomies otherwise),
where $\mathfrak{g}=\mathfrak{sl}_2(\R)$ is endowed with
the symmetric bilinear form $\la \a,\b \ra=\mathrm{Tr}(\a\b)$.

Goldman \cite{goldman:symplectic} has proved that,
in this description,
the tangent space to $\Teich(S,X)$ at
$\rho$ is naturally $H^1(S,X;\mathfrak{g})$
and that $\omega_{WP}$ is given by
$\omega(\mu,\nu)=\mathrm{Tr}(\mu\smile \nu)\cap [S]$.

\begin{remark}
Another description of $\omega$ in terms of shear
coordinates and Thurston's symplectic form on measured
laminations is given by Bonahon-S\"ozen
\cite{bonahon-soezen:shear}.
\end{remark}

One can feel that the complex structure $J$
on $\Teich(S,X)$ inevitably shows up whenever
we deal with the Weil-Petersson metric, as
$g(\cdot,\cdot)=\omega(\cdot,J\cdot)$.
On the other hand, the knowledge of
$\omega$ is sufficient to compute
volumes and characteristic classes.

\subsection{Tautological classes}\label{ss:tautological}
\subsubsection{Relative dualizing sheaf.}

All the maps between moduli spaces
we have defined are in some sense tautological
as they are very naturally constructed and they reflect intrinsic
relations among the various moduli spaces.
It is evident that
one can look at these
as classifying maps to the Deligne-Mumford
stack $\Mbar_{g,X}$ (which obviously descend to maps between
coarse moduli spaces). Hence, we can consider all the cycles
obtained by pushing forward or pulling back via these maps as
being ``tautologically'' defined.

Moreover, there is an ingredient we have
not considered yet: it is the relative dualizing sheaf of
the universal curve $\pi_q:\Mbar_{g,X\cup\{q\}}
\rar \Mbar_{g,X}$. One expects that it carries many
informations and that it can produce many classes of interest.

The relative dualizing sheaf $\omega_{\pi_q}$
is the sheaf on $\Mbar_{g,X\cup\{q\}}$, whose
local sections are (algebraically varying)
Abelian differentials that are allowed to
have simple poles at the nodes, provided
the two residues at each node are opposite.
The local sections of $\omega_{\pi_q}(D_q)$
(the logarithmic variant of $\omega_{\pi_q}$)
are sections of $\omega_{\pi_q}$ 
that may have simple poles at the $X$-marked points.

\subsubsection{MMMAC classes.}

The {\it Miller classes} are
\[
\psi_{x_i}:=c_1(\mathcal{L}_i)
\in CH^1(\Mbar_{g,X})_{\Q}
\]
where $\mathcal{L}_i:=\vartheta_{0,\{x_i,q\}}^*\omega_{\pi_q}$
and the modified (by Arbarello-Cornalba)
{\it Mumford-Morita classes} as
\[
\k_j:=(\pi_q)_*(\psi_q^{j+1})
\in CH^j(\Mbar_{g,X})_{\Q}.
\]
One could moreover define the $l$-th {\it Hodge bundle} as
\mbox{$\mathbb{E}_l:=(\pi_q)_*(\omega_{\pi_q}^{\otimes l})$} and consider
the Chern classes of these bundles (for example, the $\lambda$
classes \mbox{$\l_i:=c_i(\mathbb{E}_1)$}).
However, using Grothendieck-Riemann-Roch,
Mumford \cite{mumford:towards} and Bini \cite{bini:algorithm} proved 
that $c_i(\mathbb{E}_j)$ can be expressed as a linear combination of
Mumford-Morita classes up to elements in the boundary, so that
they do not introduce anything really new.

When there is no risk of ambiguity,
we will denote in the same way the classes $\psi$ and $\k$
belonging to different $\Mbar_{g,X}$'s as it is now traditional.

\begin{remark}
Wolpert has proven \cite{wolpert:homology} that, on $\Mbar_g$,
we have $\k_1=[\omega_{WP}]/\pi^2$ and that the amplitude of
$\k_1\in A^1(\Mbar_g)$ (and so the projectivity of $\Mbar_g$)
can be recovered from the fact that
$[\omega_{WP}/\pi^2]$ is an integral K\"ahler class
\cite{wolpert:positive}. He also showed that
the cohomological identity
$[\omega_{WP}/\pi^2]=\k_1=(\pi_q)_*\psi_q^2$
admits a beautiful
pointwise interpretation \cite{wolpert:chern}.
\end{remark}

\subsubsection{Tautological rings.}

Because of the natural definition of $\k$ and $\psi$ classes,
as explained before,
the subring $R^*(\M_{g,X})$ of $CH^*(\M_{g,X})_{\Q}$
they generate is called the {\it tautological ring}
of $\M_{g,X}$. Its image $RH^*(\M_{g,X})$ through the cycle class map
is called cohomology tautological ring.

From an axiomatic point of view,
the {\it system of tautological rings}
$(R^*(\Mbar_{g,X}))$ is the minimal
system of subrings of
$(CH^*(\Mbar_{g,X}))$ is
the minimal system of subrings such that
\begin{itemize}
\item
every $R^*(\Mbar_{g,X})$ contains the fundamental class
$[\Mbar_{g,X}]$
\item
the system is closed under push-forward maps $\pi_*$,
$(\vartheta_{irr})_*$ and $(\vartheta_{g',I})_*$.
\end{itemize}

$R^*(\M_{g,X})$ is defined to be the image of
the restriction map
$R^*(\Mbar_{g,X})\rar CH^*(\M_{g,X})$.
The definition for the rational cohomology
is analogous
(where the role of $[\Mbar_{g,X}]$ is here played
by its Poincar\'e dual $1\in H^0(\Mbar_{g,X};\Q)$).

It is a simple fact to remark that all tautological rings contain
$\psi$ and $\kappa$ classes and in fact that
$R^*(\M_{g,X})$ is generated by them.
Really, this was the original definition of
$R^*(\M_{g,X})$.

\subsubsection{Faber's formula.}

The $\psi$ classes interact reasonably well
with the forgetful maps. In fact
\begin{align*}
(\pi_q)_*(\psi_{x_1}^{r_1}\cdots\psi_{x_n}^{r_n})=
\sum_{\{i| r_i>0\}} \psi_{x_1}^{r_1}\cdots\psi_{x_i}^{r_i-1}
\cdots\psi_{x_n}^{r_n} \\
(\pi_q)_*(\psi_{x_1}^{r_1}\cdots\psi_{x_n}^{r_n}\psi_q^{b+1})=
\psi_{x_1}^{r_1}\cdots\psi_{x_n}^{r_n}\k_b
\end{align*}
where the first one is the so-called {\it string equation} and
the second one for $b=0$ is the {\it dilaton equation}
(see \cite{witten:intersection}).
They have been generalized by Faber
for maps that forget more than one point:
Faber's formula (which we are going to describe below)
can be proven using the second equation above and
the relation
$\pi_q^*(\k_j)=\k_j-\psi_q^j$
(proven in \cite{arbarello-cornalba:combinatorial}).

Let $Q:=\{q_1,\dots,q_m\}$
and let $\pi_Q:\Mbar_{g,X\cup Q}\rar\Mbar_{g,X}$ be the forgetful map.
Then
\[
(\pi_Q)_*(\psi_{x_1}^{r_1}\cdots\psi_{x_n}^{r_n}
\psi_{q_1}^{b_1+1}\cdots\psi_{q_m}^{b_m+1})=
\psi_{x_1}^{r_1}\cdots\psi_{x_n}^{r_n}
K_{b_1\cdots b_m}
\]
where $K_{b_1\cdots b_m}=\sum_{\s\in\mathfrak{S}_m} \k_{b(\s)}$ and
$\k_{b(\s)}$ is defined in the following way.
%
%
If $\g=(c_1,\dots,c_l)$ is a cycle,
then set $b(\g):=\sum_{j=1}^l b_{c_j}$.
If $\s=\g_1\cdots\g_{\nu}$ is the decomposition in
disjoint cycles (including 1-cycles), then we let
$k_{b(\s)}:=\prod_{i=1}^{\nu} \k_{b(\g_i)}$.
We refer to \cite{kmz:volumes} for more details on Faber's
formula, to \cite{arbarello-cornalba:combinatorial} and
\cite{arbarello-cornalba:algebraic} for more properties
of tautological classes and to \cite{faber:conjectures} for
a conjectural description (which is
now partially proven) of the tautological rings.

\subsection{Kontsevich's compactification}\label{ss:konts}
\subsubsection{The line bundle $\mathbb{L}$.}

It has been observed by Witten \cite{witten:intersection} that
the intersection theory of $\kappa$ and $\psi$
classes can be reduced to that of
$\psi$ classes only by using the push-pull formula with respect
to the forgetful morphisms. Moreover recall that
\[
\psi_{x_i}=c_1(\omega_{\pi_{x_i}}(D_{x_i}))
\]
on $\Mbar_{g,X}$, where $D_{x_i}=\sum_{j\neq i} \d_{0,\{x_i,x_j\}}$
(as shown in \cite{witten:intersection}).
So, in order to find a ``minimal'' projective compactification
of $\M_{g,X}$ where to compute the intersection numbers of
the $\psi$ classes, it is natural to look at the maps induced
by the linear system $\mathbb{L}:=\sum_{x_i\in X}\omega_{\pi_{x_i}}(D_{x_i})$.
It is well-known that $\mathbb{L}$ is nef and big (Arakelov
\cite{arakelov:families} and Mumford \cite{mumford:towards}),
so that the problem is to decide whether $\mathbb{L}$ is semi-ample
and to determine its exceptional locus $Ex(\mathbb{L}^{\otimes d})$
for $d\gg 0$.

It is easy to see that $\mathbb{L}^{\otimes d}$ pulls back to
the trivial line bundle via the boundary map
\mbox{$\Mbar_{g',\{x'\}}\times\{C\}\lra\Mbar_{g,X}$},
where $C$ is a fixed curve of genus $g-g'$ with a $X\cup\{x''\}$-marking
and the map glues $x'$ with $x''$. Hence the map induced by the
linear system $\mathbb{L}^{\otimes d}$ (if base-point-free)
should restrict to the projection
\mbox{$\Mbar_{g,\{x'\}}\times\Mbar_{g-g',X\cup\{x''\}}
\lra \Mbar_{g-g',X\cup\{x''\}}$}
on these boundary components.

Whereas $\mathbb{L}$ is semi-ample in characteristic $p>0$, it is not so in
characteristic $0$ (Keel \cite{keel:basepoint}). However, one can still
topologically contract the exceptional (with respect to $\mathbb{L}$)
curves to obtain Kontsevich's map
\[
\xi':\Mbar_{g,X}\lra \Mbar^K_{g,X}
\]
which is a proper continuous surjection of orbispaces.
A consequence of Keel's result is that the coarse
$\ol{M}^K_{g,P}$ cannot be given
a scheme structure such that the contraction map is a morphism.
This is in some sense unexpected, because the morphism behaves as if
it were algebraic: in particular, the fiber product
$\ol{M}_{g,X}\times_{\ol{M}^K_{g,X}}\ol{M}_{g,X}$ is projective.

\begin{remark}
$\Mbar^K_{g,X}$ can be given the structure of a stratified
orbispace, where the stratification is again
by topological
type of the generic curve in the fiber of $\xi'$.
Also, the stabilizer of a point $s$ in $\Mbar^K_{g,X}$ will be
the same as the stabilizer of the generic point in $(\xi')^{-1}(s)$.
\end{remark}

\subsubsection{Visibly equivalent curves.}

So now we leave the realm of algebraic geometry and proceed
topologically to construct and describe this different compactification.
In fact we introduce a slight modification of Kontsevich's
construction (see \cite{kontsevich:intersection}).
We realize it as a quotient of $\Mbar_{g,X}\times \D_X$
by an equivalence relation, where
$\D_X$ is the standard simplex in $\R^X$

If $(\Si,\up)$ is an element of $\Mbar_{g,X}\times \D_X$, then
we say that an irreducible component of $\Si$
(and so the associated vertex of the dual graph $\zeta_{\Si}$) is
{\it visible} with respect to $\up$ if it contains a point
$x_i \in X$ such that $p_{i}>0$.

Next, we declare that $(\Si,\up)$ is equivalent to $(\Si',\up')$ if
$\up=\up'$ and there is a homeomorphism of pointed surfaces
$\Si \arr{\sim}{\lra} \Si'$, which is biholomorphic on the visible
components of $\Si$.
As this relation would not give back a Hausdorff space
we consider its closure, which we are now going to describe.

Consider the following two moves on the dual graph $\zeta_\Si$:
\begin{enumerate}
\item
if two non-positive vertices $w$ and $w'$ are joined by
an edge $e$, then we can build a new graph discarding $e$,
merging $w$ and $w'$ along $e$, thus obtaining
a new vertex $w''$, which we label
with $(g_{w''},X_{w''}):=(g_w+g_{w'},X_w\cup X_{w'})$
\item
if a non-positive vertex $w$ has a loop $e$, we can
make a new graph discarding $e$ and relabeling $w$
with $(g_w+1,X_w)$.
\end{enumerate}
Applying these moves to $\zeta_\Si$ iteratively until the process ends,
we end up with a
{\it reduced dual graph} $\zeta_{\Si,\up}^{red}$.
Call
$V_-(\Si,\up)$ the subset of invisible
vertices and
$V_+(\Si,\up)$ the subset of visible
vertices of $\zeta_{\Si,\up}^{red}$.

For every couple $(\Si,\up)$ denote by $\ol{\Si}$ the
quotient of $\Si$ obtained collapsing every non-positive
component to a point.

We say that $(\Si,\up)$ and $(\Si',\up')$ are {\it visibly equivalent}
if $\up=\up'$ and
there exist 
a homeomorphism
$\ol{\Si} \arr{\sim}{\lra} \ol{\Si}'$,
whose restriction to each component is analytic,
and a compatible isomorphism
$f^{red}: \zeta_{\Si,p}^{red} \arr{\sim}{\lra} \zeta_{\Si',p'}^{red}$
of reduced dual graphs.

\begin{remark}
In other words, $(\Si,\up),(\Si',\up')$
are visibly equivalent if and only if
$\up=\up'$
there exists a stable $\Si''$ and maps $h:\Si''\rar \Si$
and $h':\Si''\rar \Si'$ such that $h,h'$ are biholomorphic
on the visible components and are a stable
marking on the invisible components of $(\Si'',\up)$
(that is, they may shrink some disjoint simple closed curves
to nodes and are homeomorphisms elsewhere).
\end{remark}

Finally call
\[
\xi: \Mbar_{g,X}\times \D_X \lra
\Mbar^\Delta_{g,X}:=\Mbar_{g,X}\times\D_X/\!\sim
\]
the quotient map and remark that $\Mbar^\Delta_{g,X}$ is compact
and that $\xi$ commutes with the projection onto $\D_X$.

Similarly, one can say that two $(S,X)$-marked stable
surfaces $([f:S\rar \Si],\up)$ and $([f':S\rar \Si'],\up')$
are visibly equivalent if there exists a stable $(S,X)$
marked $[f'':S\rar \Si'']$ and maps
$h:\Si''\rar \Si$ and $h':\Si''\rar \Si'$
such that $h\circ f''\simeq f$, $h'\circ f''\simeq f'$
and $(\Si,\up),(\Si',\up')$ are visibly
equivalent through $h,h'$ (see the remark above).
Consequently, we can define $\ol{\Teich}^{\Delta}(S,X)$
as the quotient of $\ol{\Teich}(S,X)\times\Delta_X$
obtained by identifying visibly equivalent $(S,X)$-marked
surfaces.

For every $\up$ in $\D_X$, we will
denote by $\Mbar^\Delta_{g,X}(\up)$ the subset of points
of the type $[\Si,\up]$.
Then it is clear that $\Mbar^\Delta_{g,X}(\D^\circ_X)$
is in fact homeomorphic to a product $\Mbar^\Delta_{g,X}(\up)\times\D^\circ_X$
for any given $\up \in \D^\circ_X$.
Observe that $\Mbar^\Delta_{g,X}(\up)$ is isomorphic to $\Mbar^K_{g,X}$
for all \mbox{$\up \in \D^\circ_X$} in such a way that
\[
\xi_{\up}:\Mbar_{g,X}\cong\Mbar_{g,X}\times\{\up\}\lra\Mbar^\Delta_{g,X}(\up)
\]
identifies to $\xi'$.

Notice, by the way, that the fibers of $\xi$ are isomorphic to moduli spaces.
More precisely consider a point $[\Si,\up]$ of $\Mbar^\Delta_{g,X}$.
For every $w \in V_-(\Si,\up)$, call $Q_v$ the subset of edges
of $\zeta_{\Si,\up}^{red}$ outgoing from $w$.
Then we have the natural isomorphism
\[
\xi^{-1}([\Si,\up])\cong \prod_{w\in V_-(\Si,\up)} \Mbar_{g_w,X_w\cup Q_w}
\]
according to the fact that
\mbox{$\ol{M}_{g,X}\times_{\ol{M}^K_{g,X}}\ol{M}_{g,X}$}
is projective.
%
\section{Cell decompositions of the
moduli space of curves}\label{sec:triangulation}
%
\subsection{Harer-Mumford-Thurston construction}\label{ss:hmt}

One traditional way to associate a weighted arc system to a Riemann
surface endowed with weights at its marked points is to look at
critical trajectories of Jenkins-Strebel quadratic differentials.
Equivalently, to decompose the punctured
surface into a union of semi-infinite
flat cylinders with assigned lengths of their circumference.

\subsubsection{Quadratic differentials.}

Let $\Si$ be a compact Riemann surface and let $\varphi$ be a
{\it meromorphic quadratic differential}, that is $\varphi=\varphi(z)dz^2$
where $z$ is a local holomorphic coordinate and $\varphi(z)$ is a
meromorphic function. Being a quadratic differential means
that, if $w=w(z)$ is another local coordinate, then
$\dis \varphi=\varphi(w)\left(\frac{dz}{dw}\right)^2 dw^2$.

{\it Regular points} of $\Si$ for $\varphi$ are points where $\varphi$
has neither a zero nor a pole; {\it critical points} are zeroes
or poles of $\varphi$.

We can attach a metric to $\varphi$, by simply setting
$\dis |\varphi|:=\sqrt{\varphi\ol{\varphi}}$. In coordinates,
$|\varphi|=|\varphi(z)|dz\,d\ol{z}$.
The metric is well-defined and flat at the regular points
and it has conical singularities (with angle $\alpha=(k+2)\pi$)
at simple poles ($k=-1$) and at zeroes of order $k$.
Poles of order $2$ or higher are at infinite distance.

If $P$ is a regular point,
we can pick a local holomorphic coordinate $z$ at
$P\in U\subset\Si$ such that
$z(P)=0$ and
$\varphi=dz^2$ on $U$. The choice of $z$ is unique up sign.
Thus, $\{Q\in U\,|\,z(Q)\in \R\}$ defines a real-analytic curve through $P$
on $\Si$, which is called {\it horizontal trajectory} of $\varphi$.
Similarly, $\{Q\in U\,|\,z(Q)\in i\R\}$ defines the {\it vertical
trajectory} of $\varphi$ through $P$.

Horizontal (resp. vertical) trajectories $\tau$ are intrinsically
defined by asking that the restriction of $\varphi$ to $\tau$
is a positive-definite (resp. negative-definite) symmetric bilinear
form on the tangent bundle of $\tau$.

If $\varphi$ has at worst double poles,
then the local aspect of horizontal trajectories is as in
Figure~\ref{fig:trajectories}
(horizontal trajectories through $q$ are drawn thicker).

\begin{center}
\begin{figurehere}
\psfrag{q}{$q$}
\psfrag{0}{$f(z)=dz^2$}
\psfrag{1}{$f(z)=z\,dz^2$}
\psfrag{2}{$f(z)=z^2\,dz^2$}
\psfrag{-1}{$\dis f(z)=\frac{dz^2}{z}$}
\psfrag{-2-}{$\dis f(z)=-a\frac{dz^2}{z^2}$}
\psfrag{a}{$a>0$}
\psfrag{-2+}{$\dis f(z)=a\frac{dz^2}{z^2}$}
\includegraphics[width=0.8\textwidth]{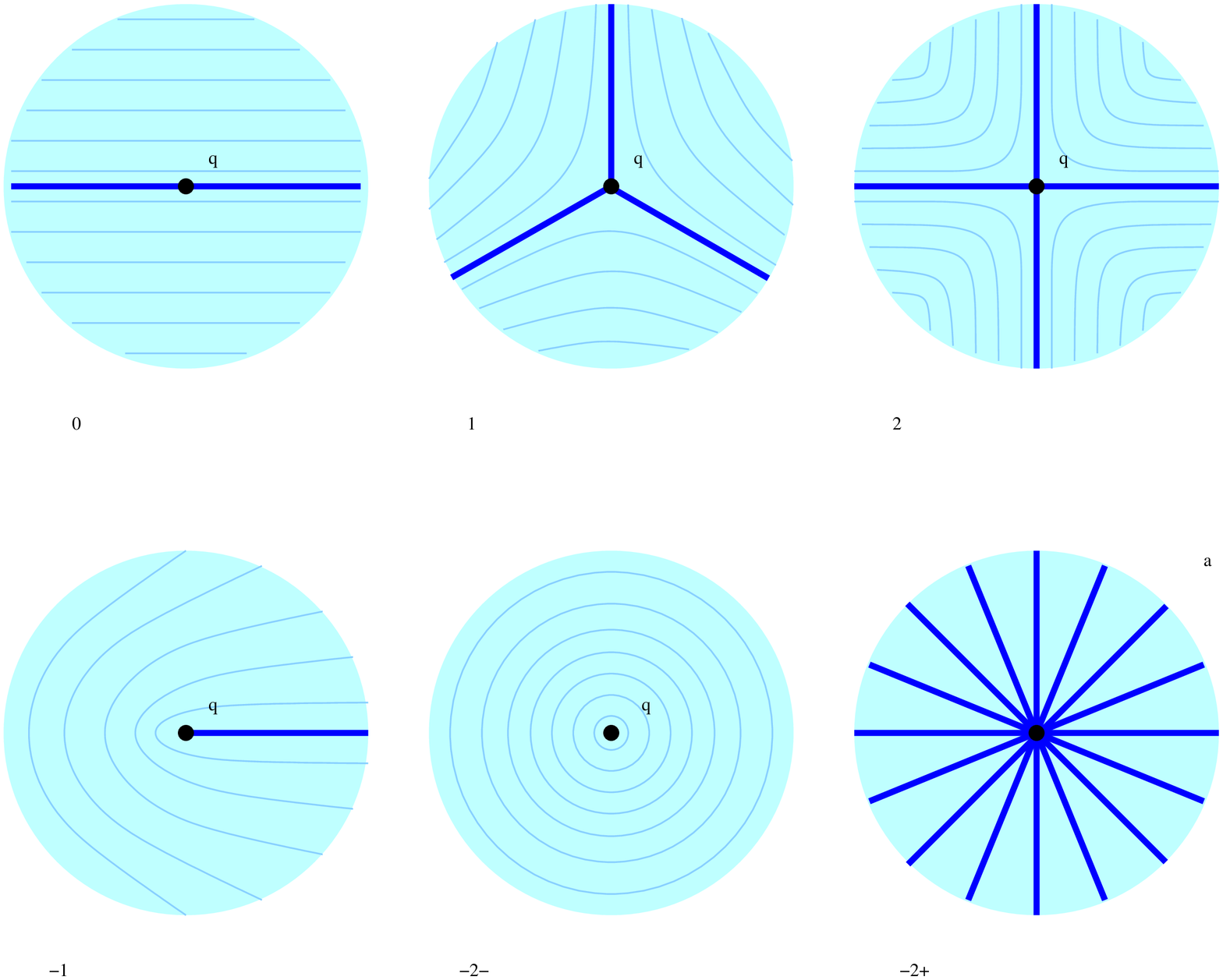}
\caption{Local structure of horizontal trajectories.}
\label{fig:trajectories}
\end{figurehere}
\end{center}

Trajectories are called {\it critical} if they meet a critical point.
It follows from the general classification (see \cite{strebel:book})
that
\begin{itemize}
\item
a trajectory
is {\it closed} if and only if it is either periodic or it
starts and ends at a critical point;
\item
if a horizontal trajectory $\tau$ 
is periodic, then there exists a maximal open annular
domain $A\subset \Si$ and a number $c>0$ such that
\[
\left(A,\varphi\Big|_A\right)
\arr{\sim}{\lra}
\left(\{z\in\C\,|\,r<|z|<R\},-c\frac{dz^2}{z^2}\right)
\]
and, under this identification,
$\tau=\{z\in\C\,|\, h=|z|\}$ for some $h\in(r,R)$;
\item
if all horizontal trajectories are closed of finite length,
then $\varphi$ has at worst double poles, where
it has negative quadratic residue (i.e. at a double pole,
it looks like $\dis -a \frac{dz^2}{z^2}$, with $a>0$).
\end{itemize}

\subsubsection{Jenkins-Strebel differentials.}

There are many theorems about existence
and uniqueness of quadratic differentials $\varphi$
with specific
behaviors of their trajectories and about their
characterization using extremal properties of
the associated metric $|\varphi|$ (see Jenkins
\cite{jenkins:existence}).
The following result is the one we are interested in.

\begin{theorem}[Strebel \cite{strebel:quadratic}]\label{thm:strebel}
Let $\Si$ be a compact Riemann surface of genus $g$
and $X=\{x_1,\dots,x_n\}
\subset\Si$ such that $2g-2+n>0$.
For every $(p_1,\dots,p_n)\in\R_+^X$
there exists a unique quadratic differential $\varphi$
such that
\begin{itemize}
\item[(a)]
$\varphi$ is holomorphic on $\Si\setminus X$
\item[(b)]
all horizontal trajectories of $\varphi$ are closed
\item[(c)]
it has a double pole at $x_i$ with quadratic
residue $\dis -\left(\frac{p_i}{2\pi} \right)^2$
\item[(d)]
the only annular domains of $\varphi$ are pointed
discs at the $x_i$'s.
\end{itemize}
Moreover, $\varphi$ depends continuously on $\Si$
and on $\up=(p_1,\dots,p_n)$.
\end{theorem}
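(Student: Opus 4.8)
The plan is to obtain $\varphi$ as the minimiser of a convex ``regularised area'' functional on a fixed affine space of candidates, to read off its trajectory structure from the resulting variational equation, and to deduce continuity from a normal-families argument.

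\emph{Set-up and uniqueness.} First I would observe that, by Riemann--Roch, the set $\Qq_{\up}(\Si)$ of meromorphic quadratic differentials on $\Si$ that are holomorphic on $\Si\setminus X$ and have at each $x_i$ a double pole of quadratic residue exactly $-(p_i/2\pi)^2$ is a non-empty affine space, a torsor under $H^0(\Si,K_\Si^{\otimes 2}(X))$ (quadratic differentials with at worst simple poles along $X$); in particular any two elements of $\Qq_{\up}(\Si)$ differ by an \emph{integrable} differential. Fixing a reference $\varphi_0\in\Qq_{\up}(\Si)$, the ``regularised area'' $F(\varphi):=\int_\Si(|\varphi|-|\varphi_0|)$ is then a finite, convex function of $\varphi$ (since $\big||\varphi|-|\varphi_0|\big|\le|\varphi-\varphi_0|$), and it is proper because all $L^1$-norms are equivalent on the finite-dimensional space $H^0(\Si,K_\Si^{\otimes 2}(X))$ and the mass of $\varphi-\varphi_0$ near $X$ can be made small; hence $F$ attains a minimum. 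Uniqueness of the minimiser follows because, if $\varphi,\varphi'$ both minimise $F$, then $F$ is affine on the segment joining them, which by pointwise equality in Cauchy--Schwarz forces the meromorphic function $(\varphi-\varphi')/\varphi$ to be real a.e., hence a real constant, which must be $0$ as $\varphi$ and $\varphi'$ have the same \emph{nonzero} residues. The Euler--Lagrange equation of this problem reads
\[
\operatorname{Re}\int_\Si\frac{\eta}{\varphi}\,|\varphi|=0\qquad\text{for all }\eta\in H^0(\Si,K_\Si^{\otimes 2}(X)),
\]
and a short computation in cylindrical coordinates shows that, conversely, any differential satisfying (a)--(d) is a critical point of $F$: on the $i$-th cylinder the constraint that $\eta$ have at worst a simple pole at $x_i$ is precisely the vanishing of the zero Fourier mode, which kills the integral there. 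Thus uniqueness in the theorem will be automatic, and existence will reduce to showing that the minimiser $\varphi$ has closed horizontal trajectories, with $\Si$ exhausted up to a measure-zero critical graph by $n$ pointed-disc cylinders.

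\emph{Trajectory structure --- the main obstacle.} Near $x_i$ the residue $-(p_i/2\pi)^2\neq 0$ forces, by the local classification of trajectories recalled above, a maximal annular domain $R_i$ around $x_i$ swept by closed trajectories; as $\varphi$ has only a double pole there, $R_i$ is a pointed disc, and the flat length of its core trajectories is $2\pi\sqrt{(p_i/2\pi)^2}=p_i$, giving (c). The hard part is to show that the $R_i$ fill $\Si$ up to the critical graph --- i.e. that no trajectory is recurrent and no further ``interior'' annular domain survives --- which yields (a), (b) and (d) at once. Here I would run the classical Jenkins--Strebel extremal-length argument: the minimiser is the extremal metric for maximising $\sum_i p_i^2\cdot(\text{reduced module of }R_i)$ over configurations of disjoint ring domains homotopic to the peripheral loops, and a recurrent trajectory, or an interior cylinder (which carries no weight in this functional), can be used to construct a compactly supported $\eta\in H^0(\Si,K_\Si^{\otimes 2}(X))$ that strictly decreases $F$ --- contradicting the Euler--Lagrange equation; see \cite{strebel:book}. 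The double poles make the modules and the area infinite, which is why one works throughout with the regularised functional and with reduced modules; handling this, together with the structure theory of measured foliations needed to ``straighten'' recurrent trajectories, is the genuine obstacle.

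\emph{Continuity.} Finally, for $(\Si,\up)$ varying in a family I would choose $\varphi_0=\varphi_0(\Si,\up)$ continuously; then $F(\varphi_{\up}(\Si))=\min F\le F(\varphi_0)=0$, so the uniform lower bound on $F$ from properness gives a uniform bound on $\int_K|\varphi_{\up}(\Si)|$ over compacta $K\subset\Si\setminus X$, plus the controlled double-pole behaviour near $X$. By the mean-value inequality for holomorphic functions $\{\varphi_{\up}(\Si)\}$ is a normal family, so any sequence subconverges in $C^\infty_{\mathrm{loc}}$ on $\Si\setminus X$ (after the usual identifications of nearby surfaces) to some $\varphi_*$; reading off Laurent coefficients shows $\varphi_*\in\Qq_{\up_0}(\Si_0)$, and passing to the limit in the Euler--Lagrange equation shows $\varphi_*$ is a critical point of the convex functional $F_0$, hence equals the unique minimiser $\varphi_{\up_0}(\Si_0)$. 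As every subsequence has a further subsequence with the same limit, $\varphi_{\up}(\Si)\to\varphi_{\up_0}(\Si_0)$, which is the claimed continuity.
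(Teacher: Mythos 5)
Your variational framework is genuinely different from the paper's, and parts of it are sound: the affine space $\Qq_{\up}(\Si)$ is indeed nonempty (the obstruction lives in $H^1(\Si,K_\Si^{\otimes 2}(X))$, which vanishes since $2g-2+n>0$), your convexity/properness argument produces a unique minimiser of the regularised area, and your cylindrical-coordinate computation correctly shows that any differential satisfying (a)--(d) annihilates the first variation (only the coefficient of $z^{-2}$ of $\eta$ survives the angular integration, and it vanishes because $\eta$ has at worst a simple pole). Since the functional is convex, critical points are minimisers, so uniqueness --- which the paper does not even sketch --- does follow cleanly from your setup, and the normal-families continuity argument is plausible once existence and the minimising characterisation are in place.

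The genuine gap is the existence step, i.e.\ the claim that the minimiser satisfies (b) and (d). Your proposed bridge does not work as stated: a nonzero element of $H^0(\Si,K_\Si^{\otimes 2}(X))$ is never compactly supported, so ``a compactly supported $\eta\in H^0(\Si,K_\Si^{\otimes 2}(X))$ that strictly decreases $F$'' cannot exist. More to the point, in your problem the only admissible variations are the finitely many global holomorphic directions in $H^0(\Si,K_\Si^{\otimes 2}(X))$, so you cannot localise a perturbation along a recurrent trajectory or inside an unwanted interior cylinder, which is exactly what the classical heights/extremal-length arguments exploit; and the identification of your minimiser with the extremal configuration for Jenkins' module problem $\sum_i p_i^2\cdot(\text{reduced module of }R_i)$ is asserted, not proved --- it is essentially the content of the theorem you are trying to establish, so as written the argument is circular at its crucial point. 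Compare with the paper's (traditional) route: there one maximises $F(s)=\sum_i p_i^2 m_i(s)$ over \emph{systems of annuli} (holomorphic injections of discs at the $x_i$), defines $\varphi$ explicitly as the cylinder differential on the extremal annuli and zero elsewhere, and then uses stationarity under flows $f_t=\exp(tV)$ of \emph{compactly supported} vector fields on $\Si\setminus X$ together with Weyl's lemma to conclude holomorphy; the availability of such local variations is precisely what your finite-dimensional variation space lacks, and what you would need to supply (or replace by the actual Jenkins--Strebel trajectory analysis) to close the proof.
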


\begin{center}
{\large
\begin{figurehere}
\psfrag{xi}{$x_i$}
\includegraphics[width=0.8\textwidth]{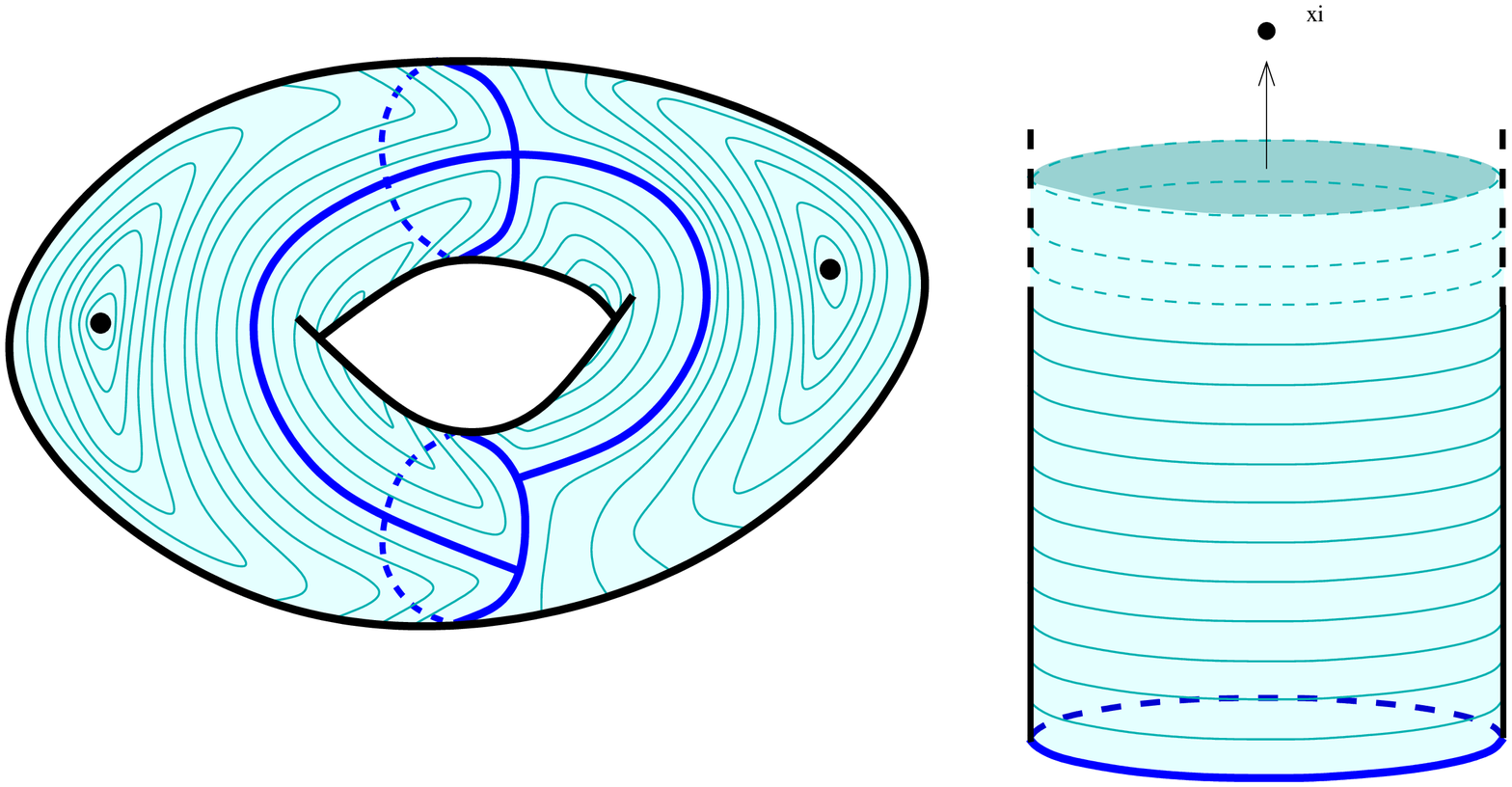}
\caption{Example of horizontal foliation of a Jenkins-Strebel differential.}
\label{fig:JS}
\end{figurehere}
}
\end{center}

\begin{remark}
Notice that the previous result establishes the
existence of a continuous map
\[
\R_+^X \lra \{\text{continuous sections of
$\mathcal{Q}(S,2X)\rar\Teich(S,X)$} \}
\]
where $\mathcal{Q}(S,2X)$ is the vector
bundle, whose fiber over $[f:S\rar\Si]$ is
the space of quadratic differentials on $\Si$,
which can have double poles at $X$ and
are holomorphic elsewhere.
Hubbard and Masur \cite{hubbard-masur:foliations} proved
(in a slightly different case, though) that the
sections of $\mathcal{Q}(S,2X)$ are piecewise
real-analytic and gave precise equations for
their image.
\end{remark}

Quadratic differentials that satisfy (a) and (b)
are called {\it Jenkins-Strebel differentials}.
They are particularly easy to understand because
their critical trajectories form a graph $G=G_{\Si,\up}$
embedded inside the surface $\Si$ and $G$
decomposes $\Si$ into a union of cylinders
(with respect to the flat metric $|\varphi|$),
of which horizontal trajectories are the circumferences.

Property (d) is telling us that $\Si\setminus X$
retracts by deformation onto $G$, flowing along
the vertical trajectories out of $X$.

\begin{remark}
It can be easily seen that Theorem~\ref{thm:strebel}
still holds for $p_1,\dots,p_n \geq 0$ but
$\up\neq 0$. Condition (d) can be rephrased
by saying that every annular domain corresponds
to some $x_i$ for which $p_i>0$, and that $x_j\in G$
if $p_j=0$. It is still true that $\Si\setminus X$
retracts by deformation onto $G$.
\end{remark}

We sketch the traditional existence proof of
Theorem~\ref{thm:strebel}.

\begin{definition}
The {\it modulus} of a standard annulus $A(r,R)=\{z\in\C\,
|\, r<|z|<R\}$ is $\dis m(A(r,R))=\frac{1}{2\pi}\log(R/r)$
and the modulus of an annulus $A$ is defined to be that
of a standard annulus biholomorphic to $A$.
Given a simply connected domain $0\in U\subset \C$
and let $z$ be a holomorphic coordinate at $0$.
The {\it reduced modulus} of the annulus
$U^*=U\setminus\{0\}$ is $m(U^*,w)=m(U^*\cap\{|z|>\e\})+
\frac{1}{2\pi}\log(\e)$, which is independent
of the choice of a sufficiently small $\e>0$.
\end{definition}

Notice that the {\it extremal length} $\mathrm{E}_\g$
of a circumference $\g$ inside $A(r,R)$ is exactly
$1/m(A(r,R))$.

\begin{proof}[Existence of Jenkins-Strebel differential]
Fix holomorphic coordinates $z_1,\dots,z_n$ at $x_1,\dots,x_n$.
A {\it system of annuli} is a holomorphic injection
$s:\Delta\times X\hra \Si$ such that $s(0,x_i)=x_i$,
where $\Delta$ is the unit disc in $\C$.
Call $m_i(s)$ the reduced modulus $m(s(\Delta\times\{x_i\}),z_i)$
and
define the functional
\[
F:
\xymatrix@R=0in{
\{\text{systems of annuli}\} \ar[rr] && \R \\
s \ar@{|->}[rr] && \dis \sum_{i=1}^n p_i^2 m_i(s)
}
\]
which is bounded above, because $\Si\setminus X$ is hyperbolic.
A maximizing sequence $s_n$ converges (up to extracting a subsequence)
to a system of annuli $s_\infty$ and let $D_i=s_\infty(\Delta\times\{x_i\})$.
Notice that the restriction of $s_{\infty}$ to $\Delta\times\{x_i\}$
is injective if $p_i>0$ and is constantly $x_i$ if $p_i=0$.

Clearly, $s_\infty$ is maximizing for every choice of $z_1,\dots,z_n$
and so we can assume that, whenever $p_i>0$,
$z_i$ is the coordinate induced by $s_{\infty}$.

Define the $L^1_{loc}$-quadratic
differential $\varphi$ on $\Si$ as
$\dis \varphi:=\left(-\frac{p_i^2}{4\pi^2}
\frac{dz_i^2}{z_i^2}\right)$ on $D_i$
(if $p_i>0$) and $\varphi=0$ elsewhere.
Notice that $F(s_\infty)=\|\varphi\|_{red}$,
where the {\it reduced norm} is given by
\[
\|\varphi\|_{red}:=
\int_{\Si}\left[ |\varphi|^2-\sum_{i:p_i>0}\frac{p_i^2}{4\pi^2}
\frac{dz_i\,d\ol{z_i}}{|z_i|^2}\chi(|z_i|<\e_i)\right]+
\sum_{i=1}^n p_i\log(\e_i)
\]
which is independent of the choice of sufficiently small
$\e_1,\dots,\e_n>0$.

As $s_\infty$ is a stationary point for $F$,
so is for $\|\cdot\|_{red}$. Thus, for every smooth vector
field $V=V(z){\pa}/{\pa z}$ on $\Si$,
compactly supported on $\Si\setminus X$,
the first order variation of
\[
\|f_t^*(\varphi)\|_{red}=\|\varphi\|_{red}+2t
\int_S \mathrm{Re}(\varphi\ol{\pa}V)
+o(t)
\]
must vanish, where $f_t=\exp(tV)$.
Thus, $\varphi$ is holomorphic on $\Si\setminus X$
by Weyl's lemma and it satisfies all the requirements.
\end{proof}

\subsubsection{The nonsingular case.}

Using the construction described above,
we can attach to every $(\Si,X,\up)$
a graph $G_{\Si,\up}\subset\Si$ (and thus
an $(S,X)$-marked ribbon graph $\GG_{\Si,\up}$)
which is naturally metrized by $|\varphi|$.
By arc-graph duality (in the nonsingular case, see
\ref{sss:ns-duality}), we also
have a weighted proper system of arcs in $\Si$.
Notice that, because of (c), the boundary weights
are exactly $p_1,\dots,p_n$.

If $[f:S\rar \Si]$ is a point in $\Teich(S,X)$
and $\up\in(\R_{\geq 0}^X)\setminus\{0\}$,
then the previous construction (which is first
explicitly mentioned by Harer in \cite{harer:virtual},
where he attributes it to Mumford and Thurston)
provides a point in $|\Ao(S,X)|\times\R_+$.
It is however clear that, if $a>0$,
then the Strebel differential associated to
$(\Si,a\up)$ is $a\varphi$. Thus, we can just
consider $\up\in \mathbb{P}(\R_{\geq 0}^X)\cong \Delta_X$,
so that the corresponding weighted arc system
belongs to $|\Ao(S,X)|$ (after multiplying by a factor $2$).

Because of the continuous dependence of $\varphi$
on $\Si$ and $\up$, the map
\[
\Psi_{JS}: \Teich(S,X)\times\Delta_X \lra |\Ao(S,X)|
\]
is {\it continuous}.

We now show that a point $\ol{w}\in|\Ao(S,X)|$
determines exactly one $(S,X)$-marked surface,
which proves that $\Psi_{JS}$ is {\it bijective}.

By \ref{sss:metrized},
we can associate a metrized $(S,X)$-marked 
nonsingular ribbon graph $\GG_{\ua}$ to
each $w\in |\Ao(S,X)|_{\R}$ supported on $\ua$.
However, if we realize $|\GG_{\ua}|$ by gluing
semi-infinite tiles $T_{\ora{\a_i}}$ of the type
$[0,w(\a_i)]_x\times[0,\infty)_y\subset \hat{\C}_z$,
which naturally come together with a complex structure and
a quadratic differential $dz^2$, then $|\GG_{\ua}|$ 
becomes a Riemann surface endowed with the (unique)
Jenkins-Strebel quadratic differential $\varphi$ determined
by Theorem~\ref{thm:strebel}.
Thus, $\Psi_{JS}^{-1}(w)=([f:S\rar |\GG_{\ua}|],\up)$,
where $p_i$ is obtained from the
quadratic residue of $\varphi$ at $x_i$.
Moreover, the length function defined
on $|\Ao(S,X)|_{\R}$ exactly corresponds to the
$|\varphi|$-length function on $\M_{g,X}$.

Notice that $\Psi_{JS}$ is $\G(S,X)$-equivariant by
construction and so induces a continuous
bijection
$\ol{\Psi}_{JS}:\M_{g,X}\times\Delta_X\rar |\Ao(S,X)|/\G(S,X)$
on the quotient. If we prove that $\ol{\Psi}_{JS}$
is {\it proper}, then $\ol{\Psi}_{JS}$ is a homeomorphism.
To conclude that $\Psi_{JS}$ is a homeomorphism too,
we will use the following.

\begin{lemma}\label{lemma:isom}
Let $Y$ and $Z$ be metric spaces
acted on discontinuously
by a discrete group of isometries $G$
and let $h:Y\rar Z$ be a $G$-equivariant
continuous injection such that the induced map
$\ol{h}:Y/G\rar Z/G$ is a homeomorphism.
Then $h$ is a homeomorphism.
\end{lemma}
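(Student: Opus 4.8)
The plan is as follows. Since $h$ is injective by hypothesis, the first task is to check surjectivity: given $z\in Z$, write $\ol{h}^{-1}([z])=[y]$ for some $y\in Y$; then $[h(y)]=\ol{h}([y])=[z]$, so $z=g\cdot h(y)=h(g\cdot y)$ for some $g\in G$ by equivariance. Hence $h$ is a continuous bijection, and it remains only to prove that $h^{-1}$ is continuous. As all spaces are metric, it suffices to argue sequentially: I will show that $z_n\to z_0:=h(y_0)$ in $Z$ forces $y_n:=h^{-1}(z_n)\to y_0$ in $Y$.

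First I would equip $Y/G$ and $Z/G$ with the quotient metrics $\ol{d}([a],[b])=\inf_{g\in G}d(a,gb)$; because the actions are discontinuous and isometric, these are genuine metrics (orbits are locally separated) that induce the quotient topologies, so $\ol{h}$ is a homeomorphism of these metric spaces. From $z_n\to z_0$, continuity of the projection $\pi_Z$, and continuity of $\ol{h}^{-1}$, I get $[y_n]=\ol{h}^{-1}([z_n])\to\ol{h}^{-1}([z_0])=[y_0]$ in $Y/G$. Unwinding the definition of $\ol{d}$, there are elements $g_n\in G$ with $d(y_n,g_ny_0)\to 0$. Applying the isometry $g_n^{-1}$ gives $d(g_n^{-1}y_n,y_0)\to 0$, i.e. $g_n^{-1}y_n\to y_0$; then continuity and equivariance of $h$ yield $g_n^{-1}z_n=h(g_n^{-1}y_n)\to h(y_0)=z_0$. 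So both $z_n\to z_0$ and $g_n^{-1}z_n\to z_0$.

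The crucial step is then to pin down the $g_n$. Since $G$ acts discontinuously on $Z$, there is a neighbourhood $W$ of $z_0$ for which $F:=\{g\in G\mid gW\cap W\neq\emptyset\}$ is finite. For $n$ large, both $z_n$ and $g_n^{-1}z_n$ lie in $W$, and then $g_n^{-1}z_n\in g_n^{-1}W\cap W$, so $g_n^{-1}\in F$. Thus the $g_n$ range over the finite set $F^{-1}$, so along any given subsequence we may assume $g_n$ equals a constant $g\in G$. Then $d(y_n,gy_0)\to 0$, i.e. $y_n\to gy_0$, whence $z_n=h(y_n)\to h(gy_0)=g z_0$; comparing with $z_n\to z_0$ forces $h(gy_0)=z_0=h(y_0)$, and injectivity of $h$ gives $gy_0=y_0$. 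Hence $y_n\to y_0$ along that subsequence. Since every subsequence of $(y_n)$ has a further subsequence converging to $y_0$, the whole sequence converges to $y_0$, so $h^{-1}$ is continuous and $h$ is a homeomorphism.

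The main obstacle is the bookkeeping around the quotients: one must verify that $\ol{d}$ is an honest metric inducing the quotient topology (this is precisely where discontinuity of the action on $Y$ is used) and that the operative notion of ``acts discontinuously'' on $Z$ furnishes a neighbourhood $W$ meeting only finitely many of its $G$-translates. Once these two points are secured, the rest is a formal chase through equivariance, continuity, and injectivity of $h$.
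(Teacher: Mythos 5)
Your proof is correct and follows essentially the same route as the paper's: lift the convergence $[y_n]\to[y_0]$ from the quotient to representatives (you via the quotient pseudometric, the paper implicitly), use equivariance and the isometric action to see that the group elements nearly fix $h(y_0)$, invoke discontinuity on $Z$ to force them (up to a finite set and constant subsequence) into the stabilizer, and finish with injectivity. The only cosmetic difference is that the paper concludes $g_m\in\mathrm{stab}(h(y))$ directly for large $m$, whereas you pass through the finite set $F$ and a subsequence argument; the content is the same.
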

\begin{proof}
To show that $h$ is surjective, let $z\in Z$.
Because $\ol{h}$ is bijective, $\exists ! [y]\in Y/G$
such that $\ol{h}([y])=[z]$. Hence, $h(y)=z\cdot g$ for
some $g\in G$ and so $h(y\cdot g^{-1})=z$.

To prove that $h^{-1}$ is continuous,
let $(y_m)\subset Y$ be a sequence such that $h(y_m)\rar h(y)$
as $m\rar \infty$ for some $y\in Y$.
Clearly, $[h(y_m)]\rar[h(y)]$ in $Z/G$ and so $[y_m]\rar [y]$
in $Y/G$, because $\ol{h}$ is a homeomorphism.
Let $(v_m)\subset Y$ be a sequence such that $[v_m]=[y_m]$
and $v_m\rar y$ and call $g_m\in G$ the element such that
$y_m=v_m\cdot g_m$. By continuity of $h$, we have $d_Z(h(v_m), h(y))\rar 0$
and by hypothesis $d_Z(h(v_m)\cdot g_m,h(y))\rar 0$.
Hence, $d_Z(h(y),h(y)\cdot g_m)\rar 0$ and so
$g_m\in\mathrm{stab}(h(y))=\mathrm{stab}(h)$ for large $m$,
because $G$ acts discontinuously on $Z$.
As a consequence, $y_m\rar y$ and so $h^{-1}$ is continuous.
\end{proof}

The final step is the following.

\begin{lemma}
$\ol{\Psi}_{JS}:\M_{g,X}\times\Delta_X\rar |\Ao(S,X)|/\G(S,X)$ is proper.
\end{lemma}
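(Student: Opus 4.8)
The plan is to use Lemma~\ref{lemma:compact} to turn compactness of a subset $K\subset |\Ao(S,X)|/\G(S,X)$ into a uniform lower bound on systoles, and then to convert this \emph{combinatorial} bound into a lower bound on the \emph{hyperbolic} systole of the underlying Riemann surface, so that the Mumford--Mahler criterion applies. First, since $\Delta_X$ is compact and $\ol{\Psi}_{JS}$ is continuous, $\ol{\Psi}_{JS}^{-1}(K)$ is closed and it suffices to show that its image under the projection to $\M_{g,X}$ is relatively compact. By Lemma~\ref{lemma:compact} there is $\e>0$ with $\mathrm{sys}([\ol{w}])\geq\e$ for every $[\ol{w}]\in K$. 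Given $([\Si],\up)\in\ol{\Psi}_{JS}^{-1}(K)$, let $\varphi=\varphi_{\Si,\up}$ be the associated Jenkins--Strebel differential, $G\subset\Si$ its critical graph and $C_1,\dots,C_n$ the annular domains, so that $\Si\setminus G=\coprod_i C_i$ with $C_i$ isometric (for $|\varphi|$) to $(\R/p_i\Z)\times(0,\infty)$, the circle at $0$ mapping onto $G$ and $x_i$ sitting at $\infty$; the perimeters $p_i$ stay uniformly bounded because $\up$ ranges over the compact set $\Delta_X$. By the correspondence between the combinatorial length function on $|\Ao(S,X)|_{\R}$ and the $|\varphi|$-length function recalled in \ref{ss:hmt}, every $\g\in\mathcal{C}(S,X)$ satisfies $\iota(\g,\ol{w})\geq\e$; equivalently, every loop contained in $G$ and freely homotopic to $\g$ has $|\varphi|$-length $\geq\e$.

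Next I would bound from below the extremal length $\mathrm{Ext}_{\Si}(\g)$ of each $\g\in\mathcal{C}(S,X)$ by a \emph{truncation} of the flat metric $|\varphi|$, which fails to have finite area only because the cylinders $C_i$ are semi-infinite. Let $y:\Si\setminus X\to[0,\infty)$ be the globally defined ``height'' function ($y\equiv 0$ on $G$, $y$ the second coordinate on each $C_i$), so $y^{-1}(0)=G$ and $|\nabla y|\leq 1$ for $|\varphi|$, and note that $(x,y)\mapsto(x,0)$ defines a deformation retraction $R:\Si\setminus X\to G$ which is $1$-Lipschitz on each $C_i$. Put $h:=\e/2$, let $N_h=\{y\leq h\}$, and let $\rho:=|\varphi|$ on $N_h$ and $\rho:=0$ elsewhere; then $\mathrm{Area}_{\rho}(\Si)=h\sum_i p_i$ is bounded by a constant depending only on $\e$ and on $g,n$. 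For any loop $\g'$ freely homotopic to $\g$: if $\g'\subset N_h$ then $R(\g')\subset G$ is freely homotopic to $\g$, so $L_{\rho}(\g')=L_{|\varphi|}(\g')\geq L_{|\varphi|}(R(\g'))\geq\iota(\g,\ol{w})\geq\e$; if instead $\g'$ leaves $N_h$, then, since $\g$ is essential, $\g'$ also meets $G$, and running along $\g'$ in both directions from a point of $\g'\cap G$ to the first level $\{y=h\}$ produces two arcs inside $N_h$, disjoint except at that point, along each of which $y$ varies from $0$ to $h$; hence $L_{\rho}(\g')\geq 2h=\e$. Therefore $L_{\rho}(\g):=\inf_{\g'\sim\g}L_{\rho}(\g')\geq\e$, and so $\mathrm{Ext}_{\Si}(\g)\geq L_{\rho}(\g)^2/\mathrm{Area}_{\rho}(\Si)\geq c(\e,g,n)>0$.

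Finally I would apply this to the shortest closed hyperbolic geodesic $\g$ of $\Si$ (which is simple, hence in $\mathcal{C}(S,X)$): its collar is an embedded annulus whose modulus tends to $\infty$ as the hyperbolic length of $\g$ tends to $0$, while $\mathrm{Ext}_{\Si}(\g)$ is at most the reciprocal of that modulus; so the bound $\mathrm{Ext}_{\Si}(\g)\geq c(\e,g,n)$ forces the hyperbolic systole of $\Si$ to stay above a positive constant $\d=\d(\e,g,n)$. By the Mumford--Mahler criterion the surfaces occurring in $\ol{\Psi}_{JS}^{-1}(K)$ then form a relatively compact subset of $\M_{g,X}$, so $\ol{\Psi}_{JS}^{-1}(K)$ is a closed subset of the product of a compact set with $\Delta_X$, hence compact. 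The step I expect to be the main obstacle is the flat estimate of the second paragraph: arranging matters so that the infinite $|\varphi|$-area of the Strebel cylinders does not spoil the extremal-length lower bound. This is exactly what the truncation together with the ``stay low versus climb high'' dichotomy is designed to handle, and it is also the only place where the compactness of $\Delta_X$ (through the bound on the perimeters $p_i$) is used. An alternative route avoiding hyperbolic geometry would be to show that $\ol{\Psi}_{JS}$ extends to a continuous map $\Mbar_{g,X}\times\Delta_X\to|\Af(S,X)|/\G(S,X)$ carrying $\pa\Mbar_{g,X}\times\Delta_X$ into $|\A8(S,X)|/\G(S,X)$, so that $\ol{\Psi}_{JS}^{-1}(K)$ is the preimage of $K$ under a map out of a compact space and is automatically contained in $\M_{g,X}\times\Delta_X$; but setting up this extension requires the Jenkins--Strebel theory for nodal curves.
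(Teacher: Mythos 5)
Your proposal is correct and is essentially the paper's own proof run in the opposite (direct) direction: the paper takes a divergent sequence in $\M_{g,X}\times\Delta_X$, uses Mumford--Mahler to produce a hyperbolic geodesic of length tending to $0$, compares hyperbolic with extremal length, bounds extremal length via a truncated flat metric (cylinders cut at height $1$ and capped by flat discs, playing the role of your $N_h$ with $\rho=0$ outside), and concludes via the dichotomy ``a geodesic is either long or lies in the critical graph'' that the combinatorial systole tends to $0$, so Lemma~\ref{lemma:compact} gives divergence in $|\Ao(S,X)|/\G(S,X)$. Your stay-low/climb-high estimate and the retraction onto the critical graph are the same length--area argument, so there is no gap.
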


\begin{proof}
Let $([\Si_m],\up_m)$ be a diverging sequence in $\M_{g,X}\times\Delta_X$
and call $\l_m$ the hyperbolic metric on $\Si\setminus X$.
By Mumford-Mahler criterion, there exist simple
closed hyperbolic geodesics $\g_m\subset\Si_m$ such that
$\ell_{\l_m}(\g_m)\rar 0$.
Because the hyperbolic length and the extremal length
are approximately proportional for short curves,
we conclude that extremal length $E(\g_m)\rar 0$.

Consider now the metric $|\varphi_m|$ induced by
the Jenkins-Strebel differential $\varphi_m$
uniquely determined by $(\Si_m,\up_m)$.
Call $\ell_{\varphi}(\g_m)$ the length of the unique
geodesic $\tilde{\g}_m$
with respect to the metric $|\varphi_m|$,
freely homotopic to $\g_m\subset\Si_m$.
Notice that $\tilde{\g}_m$ is a union of critical
horizontal trajectories.

Because $|\varphi_m|$ has infinite area, define
a modified metric $g_m$ on $\Si_m$ in the same conformal
class as $|\varphi_m|$ as follows.
\begin{itemize}
\item
$g_m$ agrees with $|\varphi_m|$
on the critical horizontal trajectories of $\varphi_m$
\item
Whenever $p_{i,m}>0$, consider a coordinate $z$ at $x_i$ such that
the annular domain of $\varphi_m$ at $x_i$ is
exactly $\Delta^*=\{z\in \C\,|\, 0<|z|<1\}$
and $\dis \varphi_m=-\frac{p_{i,m}^2dz^2}{4\pi^2 z^2}$.
Then define $g_m$ to agree with $|\varphi_m|$
on $\exp(-2\pi/p_{i,m})\leq |z|<1$ (which becomes
isometric to a cylinder of circumference $p_{i,m}$
and height $1$, so with area $p_{i,m}$) and to be the metric
of a flat Euclidean disc of circumference $p_{i,m}$
centered at $z=0$ (so with area $\pi p_{i,m}^2$)
on $|z|<\exp(-2\pi/p_{i,m})$.
\end{itemize}
Notice that the total area $A(g_m)$ is
$\pi(p_{1,m}^2+\dots+p_{n,m}^2)+(p_{1,m}+\dots+p_{n,m})\leq \pi+1$.

Call $\ell_g(\g_m)$ the length of the shortest $g_m$-geodesic
$\hat{\g}_m$ in the class of $\g_m$.
By definition, $\ell_g(\g_m)^2/A(g_m)\leq E(\g_m)\rar 0$
and so $\ell_g(\g_m)\rar 0$.
As a $g_m$-geodesic is either longer than $1$ or contained
in the critical graph of $\varphi$, then $\hat{\g}_m$
coincides with $\tilde{\g}_m$ for $m\gg 0$.

Hence, $\ell_\varphi(\g_m)\rar 0$ and so
$\mathrm{sys}(\ol{w}_m)\rar 0$.
By Lemma~\ref{lemma:compact}, we conclude that
$\ol{\Psi}_{JS}(\Si_m,\up_m)$ is diverging in $|\Ao(S,X)|/\G(S,X)$.
\end{proof}

\begin{remark}\label{rmk:zero}
Suppose that $([f_m:S\rar\Si_m],\up_m)$ is converging to
$([f:S\rar \Si],\up)
\in \ol{\Teich}_{g,X}\times\Delta_X$ and let $\Si'\subset\Si$
be an invisible component.  Then $S'=f^{-1}(\Si')$ is bounded
by simple closed curves $\g_1,\dots,\g_k\subset S$
and $\ell_{\varphi_m}(\g_i)\rar 0$ for $i=1,\dots,k$.
Just analyzing the shape of the critical graph of $\varphi_m$,
one can check that $\ell_{\varphi_m}(\g)\leq \sum_{i=1}^k
\ell_{\varphi_m}(\g_i)$ for all $\g\subset S'$.
Hence, $\ell_{\varphi_m}(\g)\rar 0$ and so $f_m^*\varphi_m$
tends to zero uniformly on the compact subsets of $(S')^\circ$.
\end{remark}

\subsubsection{The case of stable curves.}

We want to extend the map $\Psi_{JS}$ to Deligne-Mumford's augmentation:
will call still $\Psi_{JS}:\ol{\Teich}(S,X)\times\Delta_X\rar |\Af(S,X)|$
this extension.

Given $([f:S\rar \Si],\up)$, we can construct a Jenkins-Strebel
differential $\varphi$
on each visible component of $\Si$, by considering
nodes as marked points with zero weight. Extend $\varphi$
to zero over the invisible components.
Clearly, $\varphi$ is a holomorphic
section of $\omega_\Si^{\otimes 2}(2X)$ (the square of the
logarithmic dualizing sheaf on $\Si$):
call it the Jenkins-Strebel differential
associated to $(\Si,\up)$. Notice that it
clearly maximizes the functional $F$, used
in the proof of Theorem~\ref{thm:strebel}.

As $\varphi$ defines a metrized
ribbon graph for each visible component of $\Si$,
one can easily see that thus we have
an $(S,X)$-marked enriched ribbon graph $\GG^{en}$
(see \ref{sss:enriched}), where $\zeta$
is the dual graph of $\Si$ and $V_+$ is the set of
visible components of $(\Si,\up)$, $m$ is determined
by the $X$-marking and $s$ by the position of the nodes.

By arc-graph duality (see \ref{sss:arc-graph}),
we obtain a system of arcs
$\ua$ in $(S,X)$ and the metrics provide a system
of weights $\ol{w}$ with support on $\ua$.
This defines the set-theoretic extension of $\Psi_{JS}$.
Clearly, it is still $\G(S,X)$-equivariant and
it identifies visibly equivalent $(S,X)$-marked
surfaces. Thus, it descends to
a bijection
$\Psi_{JS}:\ol{\Teich}^\Delta(S,X)\rar |\Af(S,X)|$
and we also have
\[
\ol{\Psi}_{JS}:\Mbar^\Delta_{g,X} \lra |\Af(S,X)|/\G(S,X)
\]
where $|\Af(S,X)|/\G(S,X)$ can be naturally given
the structure of an orbispace (essentially, forgetting
the Dehn twists along curves of $S$ that are shrunk to points,
so that the stabilizer of an arc system just becomes
the automorphism group of the corresponding enriched
$X$-marked ribbon graph).

The only thing left to prove is that $\Psi_{JS}$ is continuous.
In fact, $\Mbar^\Delta_{g,X}$ is compact and $|\Af(S,X)|/\G(S,X)$
is Hausdorff: hence, $\ol{\Psi}_{JS}$
would be (continuous and)
automatically proper, and so a homeomorphism.
Using Lemma~\ref{lemma:isom} again (using a metric pulled
back from $\Mbar^\Delta_{g,X}$), we could
conclude that $\Psi_{JS}$ is a
homeomorphism too.

\begin{proof}[Continuity of $\Psi_{JS}$]
Consider a differentiable stable family
\[
\xymatrix@R=0.3in{
S \times[0,\e] \ar[r]^{\qquad f} \ar[rd] & \mathcal{C} \ar[d]\\
& [0,\e]
}
\]
of $(S,X)$-marked curves (that is, obtained restricting
to $[0,\e]$ a holomorphic family over the unit disc $\Delta$),
such that $g$ is topologically
trivial over $(0,\e]$ with fiber a curve with $k$ nodes.
Let also $\up:[0,\e]\rar \Delta_X$ be a differentiable family
of weights.

We can assume that there are disjoint
simple closed curves $\g_1,\dots,\g_k,\eta_1,\dots,\eta_h
\subset S$ such that $f(\g_i\times\{t\})$ is a node
for all $t$, that $f(\eta_j\times\{t\})$ is a node for $t=0$
and that $\mathcal{C}_t$ is smooth away from these nodes.

Fix $K$ a nonempty open relatively compact subset 
of $S\setminus(\g_1\cup \dots\cup \g_k\cup\eta_1\cup\dots
\cup \eta_h)$ that intersects every connected component.
Define a reduced $L^1$ norm of a section $\psi_t$ of
$\omega_{\mathcal{C}_t}^{\otimes 2}(2X)$ to be
$\|\psi\|_{red}=\int_{f_t(K)} |\psi|$.
Notice that $L^1$ convergence of holomorphic sections
$\psi_t$ as $t\rar 0$
implies uniform convergence of $f_t^*\psi_t$
on the compact subsets
of $S\setminus(\g_1\cup \dots\cup \g_k\cup\eta_1\cup\dots
\cup \eta_h)$.

Call $\varphi_t$ the Jenkins-Strebel differential associated
to $(\mathcal{C}_t,\up_t)$ with annular domains
$D_{1,t},\dots,D_{n,t}$.

As all the components of $\mathcal{C}_t$ are hyperbolic,
$\|\varphi_t\|_{red}$ is uniformly bounded and we
can assume (up to extracting a subsequence) that
$\varphi_t$ converges to a holomorphic section $\varphi'_0$
of $\omega_{\mathcal{C}_0}^{\otimes 2}(2X)$ in the reduced norm.
Clearly, $\varphi'_0$ will have double poles at $x_i$
with the prescribed residue.

Remark~\ref{rmk:zero} implies that $\varphi'_0$
vanishes on the invisible components of $\mathcal{C}_0$,
whereas it certainly does not on the visible ones.

For all those $(i,t)\in \{1,\dots,n\}\times[0,\e]$
such that $p_{i,t}>0$,
let $z_{i,t}$ be the coordinate at $x_i$ (uniquely
defined up to phase) given by $\dis z_{i,t}=u_{i,t}^{-1}\Big|_{D_{i,t}}$
and
\[
u_{i,t}:\ol{\Delta}\lra \ol{D}_{i,t}\subset \mathcal{C}_t
\]
is continuous on $\ol{\Delta}$ and biholomorphic in the interior
for all $t>0$
and $\dis \varphi_t\Big|_{D_{i,t}}=
-\frac{p_{i,t}^2 dz_{i,t}^2}{4\pi^2 z_{i,t}^2}$ for $t\geq 0$.
Whenever $p_{i,t}=0$, choose $z_{i,t}$ such that
$\varphi_t\Big|_{D_{i,t}}=z^k\,dz^2$, with $k=\mathrm{ord}_{x_i}\varphi_t$.
When $p_{i,t}>0$,
we can choose the phases of $u_{i,t}$ in such a way that
$u_{i,t}$ vary continuously with $t\geq 0$.

If $p_{i,0}=0$, then set $D_{i,0}=\emptyset$.
Otherwise, $p_{i,0}>0$ and so $D_{i,0}$ cannot shrink
to $\{x_i\}$ (because $F_t$ would go to $-\infty$
as $t\rar 0$).
In this case, call $D_{i,0}$
the region $\{|z_{i,0}|<1\}\subset \mathcal{C}_0$.
Notice that $\varphi'_0$ has a double pole at $x_i$ with
residue $p_{i,0}>0$ and
clearly $\dis \varphi'_0\Big|_{D_{i,0}}=
-\frac{p_{i,0}^2 dz_{i,0}^2}{4\pi^2 z_{i,0}^2}$.

We want to prove that the visible subsurface of $\mathcal{C}_0$
is covered by $\bigcup_i \ol{D}_{i,0}$ and so $\varphi'_0$
is a Jenkins-Strebel differential on each visible component
of $\mathcal{C}_0$. By uniqueness, it must coincide with $\varphi_0$.

Consider a point $y$ in
the interior of $f_0^{-1}(\mathcal{C}_{0,+})\setminus X$.
For every $t>0$ there exists
an $y_t \in S$ such that $f_t(y_t)$ does not belong to
the critical graph of $\varphi_t$ and the $f_t^*|\varphi_t|$-distance
$d_t(y,y_t)<t$. As $\varphi_t\rar\varphi_0$ in reduced norm
and $y,y_t \notin X$, then $d_0(y,y_t)\rar 0$ as $t\rar 0$.

We can assume (up to discarding some $t$'s) that
$f_{t}(y_{t})$ belongs to $D_{i,t}$ for a fixed $i$
and in particular that $f_t(y_t)=u_{i,t}(c_t)$
for some $c_t\in \Delta$.
Up to discarding some $t$'s, we can also assume that
$c_t\rar c_0\in \ol{\Delta}$. Call $y'_t$ the point
given by $f_0(y'_t)=u_{i,0}(c_t)$.
\begin{align*}
d_0(y'_t,y) & \leq d_0(y_t,y) + d_0(y'_t,y_t) 
\leq  d_0(y_t,y)  +  d_0(f_0^{-1}u_{i,0}(c_t),f_t^{-1}u_{i,t}(c_t))  \leq \\
 & \leq  d_0(y_t,y)  +  d_0(f_0^{-1}u_{i,0}(c_t),f_0^{-1}u_{i,0}(c_0)) +\\
& \qquad \quad +  d_0(f_0^{-1}u_{i,0}(c_0),f_t^{-1}u_{i,t}(c_0)) 
+ d_0(f_t^{-1}u_{i,t}(c_0),f_t^{-1}u_{i,t}(c_t))
\end{align*}
and all terms go to zero as $t\rar 0$.
Thus, every point in the smooth
locus $\mathcal{C}_{0,+}\setminus X$ is at $|\varphi_0|$-distance
zero from some $D_{i,0}$. Hence, $\varphi_0$ is a Jenkins-Strebel
differential on the visible components.

With a few simple considerations, one can easily conclude that
\begin{itemize}
\item
the zeroes of $\varphi_t$ move with continuity for $t\in[0,\e]$
\item
if $e_t$ is an edge of the critical graph of $\varphi_t$
which starts at $y_{1,t}$ and ends at $y_{2,t}$, and if
$y_{i,t}\rar y_{i,0}$ for $i=1,2$, then
$e_t\rar e_0$ the corresponding edge of the critical graph of $\varphi_0$
starting at $y_{1,0}$ and ending at $y_{2,0}$; moreover,
$\ell_{|\varphi_t|}(e_t)\rar \ell_{|\varphi_0|}(e_0)$
\item
the critical graph of $\varphi_t$ converges to that of $\varphi_0$
for the Gromov-Hausdorff distance.
\end{itemize}
Thus, the associated weighted arc systems $\ol{w}_t\in |\Af(S,X)|$
converge to $\ol{w}_0$ for $t\rar 0$. 
\end{proof}

Thus, we have proved the following result, claimed first
by Kontsevich in \cite{kontsevich:intersection}
(see Looijenga's \cite{looijenga:cellular}
and Zvonkine's \cite{zvonkine:strebel}).

\begin{proposition}
The map defined above
\[
\Psi_{JS}: \ol{\Teich}^\Delta(S,X) \lra |\Af(S,X)|
\]
is a $\G(S,X)$-equivariant homeomorphism, which
commutes with the projection onto $\Delta_X$.
Hence, $\ol{\Psi}_{JS}:\Mbar^\Delta_{g,X}\rar |\Af(S,X)|/\G(S,X)$
is a homeomorphism of orbispaces too.
\end{proposition}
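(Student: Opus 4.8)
The plan is to combine the facts already assembled above. First I would record the purely set-theoretic content: by its very construction $\Psi_{JS}$ is $\G(S,X)$-equivariant, it commutes with the projection to $\Delta_X$ (the boundary weights of the dual weighted arc system being exactly the coordinates $p_1,\dots,p_n$), and it identifies two $(S,X)$-marked stable surfaces precisely when they are visibly equivalent. Hence it factors through a bijection $\Psi_{JS}:\ol{\Teich}^\Delta(S,X)\to|\Af(S,X)|$ and, after passing to $\G(S,X)$-quotients, through a bijection $\ol{\Psi}_{JS}:\Mbar^\Delta_{g,X}\to|\Af(S,X)|/\G(S,X)$. Bijectivity over the interior of $\Delta_X$ is exactly the nonsingular case treated above, and over the boundary it follows from the arc/graph duality of~\ref{sss:arc-graph} together with the uniqueness clause of Theorem~\ref{thm:strebel}, applied to each visible component of the curve (with nodes regarded as weight-zero marked points).

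Next I would invoke the continuity of $\Psi_{JS}$ proved above; this gives continuity of $\ol{\Psi}_{JS}$ as well. Since $\Mbar^\Delta_{g,X}$ is the image of the compact space $\Mbar_{g,X}\times\Delta_X$ under the quotient map $\xi$, it is compact, while $|\Af(S,X)|/\G(S,X)$ is Hausdorff (this is precisely where one needs the metric, rather than the standard, topology on the realization). A continuous bijection from a compact space onto a Hausdorff space is a homeomorphism, so $\ol{\Psi}_{JS}$ is a homeomorphism; it is then automatically a homeomorphism of orbispaces, since $\Psi_{JS}$ being an equivariant bijection matches the stabilizer of each arc system with that of the dual enriched ribbon graph. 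In particular $\ol{\Psi}_{JS}$ is proper.

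Finally, to upgrade this to a homeomorphism upstairs I would apply Lemma~\ref{lemma:isom} with $Z=|\Af(S,X)|$ carrying the path metric $d$, and with $Y=\ol{\Teich}^\Delta(S,X)$ carrying the metric obtained by pulling back, along the quotient map $Y\to\Mbar^\Delta_{g,X}$, a metric compatible with the (compact) topology of $\Mbar^\Delta_{g,X}$. With these metrics $\G(S,X)$ acts by isometries, $\Psi_{JS}:Y\to Z$ is a $\G(S,X)$-equivariant continuous injection, and the induced map on quotients is the homeomorphism $\ol{\Psi}_{JS}$ just obtained; so Lemma~\ref{lemma:isom} yields that $\Psi_{JS}$ is a homeomorphism, and it commutes with the projection to $\Delta_X$ because $\Psi_{JS}$ does. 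The one point that requires care---and the only genuine obstacle left once continuity is in hand---is verifying the hypothesis of Lemma~\ref{lemma:isom} that $\G(S,X)$ acts discontinuously on $Y$ and on $Z$, i.e.\ that orbits of a point do not accumulate at that point except through its (possibly infinite, but simplicially trivial) stabilizer: for $Z$ this comes from the local structure of the simplicial complex $|\Af(S,X)|$ in the metric topology, and for $Y$ it reflects exactly the fact that passing from $\ol{\Teich}^{DM}(S,X)$ to $\ol{\Teich}^\Delta(S,X)$ has forgotten the Dehn twists about the pinched curves. All the analytic substance---the convergence in reduced norm of the Jenkins-Strebel differentials and of their critical graphs in the Gromov-Hausdorff sense---is contained in the continuity proof above, so the remainder is formal.
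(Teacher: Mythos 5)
Your assembly of the proof follows the paper's route exactly: the set-theoretic extension via Jenkins--Strebel differentials on the visible components (nodes as weight-zero points), bijectivity from the nonsingular case, componentwise uniqueness in Theorem~\ref{thm:strebel} and the arc/graph duality of \ref{sss:arc-graph}, then compactness of $\Mbar^\Delta_{g,X}$ plus Hausdorffness of $|\Af(S,X)|/\G(S,X)$ to make $\ol{\Psi}_{JS}$ a homeomorphism, and finally Lemma~\ref{lemma:isom} with a metric pulled back from $\Mbar^\Delta_{g,X}$ to upgrade to $\Psi_{JS}$ itself. All of this is exactly how the paper reduces the statement, and your remark about where the discontinuity hypothesis of Lemma~\ref{lemma:isom} must be checked is a fair (and honest) gloss on a point the paper also treats lightly.

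The gap is that you invoke ``the continuity of $\Psi_{JS}$ proved above'' as if it were prior material, whereas continuity of the extended map at points of the Deligne--Mumford boundary is part of what the proposition asserts and is precisely the substantive content of the paper's proof; everything else is, as you say yourself, formal. What is actually needed, and what the paper supplies, is the degeneration argument: for a continuous (differentiable) family of $(S,X)$-marked stable curves over $[0,\e]$ with weights $\up_t$, one shows that the Strebel differentials $\varphi_t$ have uniformly bounded reduced $L^1$ norm, extracts a limit $\varphi'_0$, checks via Remark~\ref{rmk:zero} that it vanishes on invisible components, proves that the annular domains $D_{i,t}$ with $p_{i,0}>0$ do not collapse (otherwise the functional $F$ would diverge to $-\infty$) and that their closures cover the visible subsurface of $\mathcal{C}_0$, so that $\varphi'_0$ is Jenkins--Strebel on the visible part and hence equals $\varphi_0$ by uniqueness; one then tracks zeroes, edges and lengths of the critical graphs to conclude that the dual weighted arc systems converge in $|\Af(S,X)|$. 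Without an argument of this kind your proposal only restates the reduction of the theorem to its analytic core; with it granted, the rest of what you wrote is correct and coincides with the paper.
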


A consequence of the previous proposition and of 
\ref{sss:arc-graph} is that the realization
$B\mathfrak{RG}_{g,X,ns}$ is the classifying space
of $\G(S,X)$ and that $B\mathfrak{RG}_{g,X}\rar \Mbar_{g,X}$
is a homotopy equivalence (in the orbifold category).

\subsection{Penner-Bowditch-Epstein construction}\label{ss:pbe}

The other traditional way to obtain a weighted arc system out
of a Riemann surface with weighted marked points is to look at
the spine of the truncated surface obtained by removing horoballs
of prescribed circumference. Equivalently, to
decompose the surface into a union of hyperbolic cusps.

\subsubsection{Spines of hyperbolic surfaces.}

Let $[f:S\rar \Si]$ be an $(S,X)$-marked hyperbolic surface
and let $\up\in\Delta_X$.
Call $H_i\subset\Si$ the horoball at $x_i$ with circumference $p_i$
(as $p_i\leq 1$, the horoball is embedded in $\Si$)
and let $\Si_{tr}=\Si\setminus \bigcup_i H_i$ be the
{\it truncated surface}.
The datum $(\Si,\pa H_1,\dots,\pa H_n)$ is also called
a {\it decorated surface}.

For every $y\in \Si\setminus X$ at finite distance
from $\pa\Si_{tr}$, let the {\it valence} $\mathrm{val}(y)$
be the number of paths that realize $\mathrm{dist}(y,\pa\Si_{tr})$,
which is generically $1$. We will call a {\it projection} of $y$
a point on $\pa\Si_{tr}$ which is at shortest distance from $y$:
clearly, there are $\mathrm{val}(y)$ of them.

Let the {\it spine} $\mathrm{Sp}(\Si,\up)$ be the locus of points
of $\Si$ which are at finite distance from $\pa\Si_{tr}$
and such that $\mathrm{val}(y)\geq 2\}$
(see Figure~\ref{fig:hcoord}).
In particular, $\mathrm{val}^{-1}(2)$
is a disjoint union of finitely many geodesic arcs
(the {\it edges}) and $\mathrm{val}^{-1}([3,\infty))$ is a finite
collection of points (the {\it vertices}).
If $p_i=0$, then we include $x_i$ in $\mathrm{Sp}(\Si,\up)$
and we consider it a vertex. Its valence is defined to be
the number of half-edges of the spine incident at $x_i$.

There is a deformation retraction of $\Si_{tr}\cap \Si_+$
(where $\Si_+$ is the visible subsurface) onto
$\mathrm{Sp}(\Si,\up)$, defined on $\mathrm{val}^{-1}(1)$
simply flowing away from $\pa\Si_{tr}$ along the unique
geodesic that realizes the distance from $\pa\Si_{tr}$.

This shows that $\mathrm{Sp}(\Si,\up)$ defines an $(S,X)$-marked
enriched ribbon graph $\GG^{en}_{sp}$. By arc-graph duality, we also have
an associated {\it spinal arc system} $\ua_{sp}\in \Af(S,X)$.

\subsubsection{Horocyclic lengths and weights.}

As $\Si$ is a hyperbolic surface, we could metrize $\mathrm{Sp}(\Si,\up)$
by inducing a length on each edge. However, the relation between
this metric and $\up$ would be a little involved.

Instead, for every edge $e$ of $\GG^{en}_{sp}$
(that is, of $\mathrm{Sp}(\Si,\up)$),
consider one of its two projections $pr(e)$ to $\pa\Si_{tr}$
and define $\ell(e)$ to the be hyperbolic length of $pr(e)$,
which clearly does not depend on the chosen projection.
Thus, the boundary weights vector $\ell_{\pa}$ is exactly $\up$.

\begin{center}
{\large
\begin{figurehere}
\psfrag{Str}{$\Si_{tr}$}
\psfrag{ai}{{\color{Red}$\a_i$}}
\psfrag{ei}{{\color{Blue}$e_i$}}
\psfrag{wi}{{\color{MidnightBlue}$w_i$}}
\includegraphics[width=0.7\textwidth]{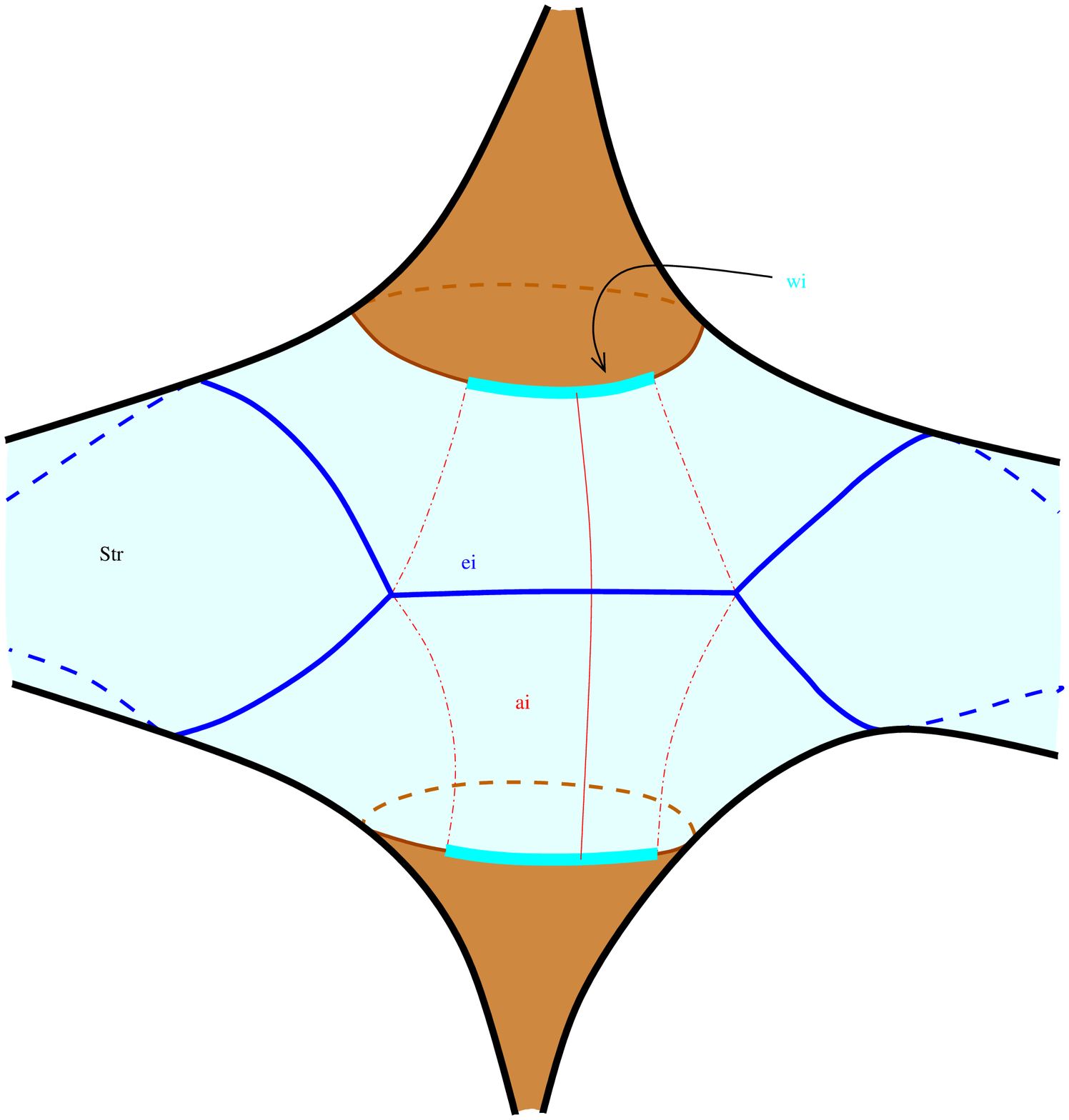}
\caption{Weights come from lengths of horocyclic arcs.}
\label{fig:hcoord}
\end{figurehere}
}
\end{center}

This endows $\GG^{en}_{sp}$ with a metric and so $\ua_{sp}$
with a projective weight $\ol{w}_{sp}\in |\Af(S,X)|$.
Notice that visibly equivalent surfaces are associated
the same point of $|\Af(S,X)|$.

This defines a $\G(S,X)$-equivariant map
\[
\Phi_{0}: \ol{\Teich}^\Delta(S,X) \lra |\Af(S,X)|
\]
that commutes with the projection onto $\Delta_X$.

Penner \cite{penner:decorated}
proved that the restriction of $\Phi_{0}$ to $\Teich(S,X)\times\Delta^\circ$
is a homeomorphism; the proof that $\Phi_{0}$ is a homeomorphism first
appears in
Bowditch-Epstein's \cite{bowditch-epstein:natural}
(and a very detailed treatment will appear in \cite{acgh:II}).
We refer to these papers for a proof of this result.

\subsection{Hyperbolic surfaces with boundary}\label{ss:boundary}

The purpose of this informal subsection is to briefly illustrate
the bridge
between the cellular decomposition of the Teichm\"uller space
obtained using Jenkins-Strebel differentials and that obtained
using spines of decorated surfaces.

\subsubsection{Teichm\"uller and moduli space of hyperbolic surfaces.}

Fix $S$ a compact oriented surface as before and $X=\{x_1,\dots,x_n\}
\subset S$ a nonempty subset.

A (stable) hyperbolic surface $\Si$ is a nodal surface
such that $\Si\setminus\{\text{nodes}\}$ is hyperbolic
with geodesic boundary and/or cusps.
Notice that, by convention, $\pa\Si$ does not include the
possible nodes of $\Si$.

An $X$-marking
of a (stable) hyperbolic surface $\Si$ is
a bijection $X \rar \pi_0(\pa\Si)$.

An $(S,X)$-marking of the (stable) hyperbolic surface $\Si$ 
is an isotopy class of maps $f:S\setminus X \rar \Si$,
that may shrink disjoint simple
closed curves to nodes and are homeomorphisms onto
$\Si\setminus(\pa\Si\cup\{\text{nodes}\})$ elsewhere.

Let $\ol{\Teich}^{\pa}(S,X)$ be the Teichm\"uller space
of $(S,X)$-marked stable hyperbolic surfaces.
There is a natural map $\ell_{\pa}:\ol{\Teich}^{\pa}(S,X)\rar \R_{\geq 0}^X$
that associates to $[f:S\rar \Si]$ the boundary lengths of $\Si$,
which thus descends to $\ol{\ell}_{\pa}:\Mbar^{\pa}_{g,X}\rar \R_{\geq 0}^X$.
Call $\ol{\Teich}^{\pa}(S,X)(\up)$
(resp. $\Mbar^{\pa}_{g,X}(\up)$) the leaf $\ell_{\pa}^{-1}(\up)$
(resp. $\ol{\ell}_{\pa}^{-1}(\up)$).

There is an obvious identification between $\ol{\Teich}^{\pa}(S,X)(0)$
(resp. $\Mbar^{\pa}_{g,X}(0)$) and $\ol{\Teich}(S,X)$ (resp.
$\Mbar_{g,X}$).

Call $\widehat{\M}_{g,X}$ the blow-up of $\Mbar^{\pa}_{g,X}$ along
$\Mbar^{\pa}_{g,X}(0)$: the exceptional locus
can be naturally identified to
the space of decorated surfaces with cusps (which
is homeomorphic to $\Mbar_{g,X}\times\Delta_X$).
Define similarly, $\widehat{\Teich}(S,X)$.

\subsubsection{Tangent space to the moduli space.}

The conformal analogue of a hyperbolic surface with geodesic
boundary $\Si$ is a Riemann surface with real boundary.
In fact, the double of $\Si$ is a hyperbolic surface with
no boundary and an orientation-reversing involution,
that is a Riemann surface with an anti-holomorphic involution.
As a consequence, $\pa\Si$ is a real-analytic submanifold.

This means that first-order deformations are determined
by Beltrami differentials on $\Si$ which are real on $\pa \Si$,
and so $T_{[\Si]}\Mbar^{\pa}_{g,X}\cong H^{0,1}(\Si,T_{\Si})$,
where $T_{\Si}$ is the sheaf of tangent vector fields
$V=V(z)\pa/\pa z$, which are real on $\pa\Si$.

Dually, the cotangent space $T^{\vee}_{[\Si]}\Mbar^{\pa}_{g,X}$ is given
by the space $\mathcal{Q}(\Si)$ of holomorphic quadratic
differentials that are real on $\pa\Si$.
If we call $\mathcal{H}(\Si)=\{\ol{\varphi}/\lambda\,|\,
\varphi\in\mathcal{Q}(\Si)\}$, where $\lambda$ is the hyperbolic
metric on $\Si$, then $H^{0,1}(\Si,T_{\Si})$ identifies
to the space of harmonic Beltrami differentials $\mathcal{H}(\Si)$.

As usual, if $\Si$ has a node, then quadratic differentials
are allowed to have a double pole at the node, with the
same quadratic residue on both branches.

If a boundary component of $\Si$ collapses to a cusp $x_i$,
then the cotangent cone to $\Mbar^{\pa}_{g,X}$ at $[\Si]$
is given by quadratic differentials that may have at worst
a double pole at $x_i$ with positive residue.
The phase of the residue being zero corresponds to the fact that,
if we take Fenchel-Nielsen coordinates on the double of $\Si$
which are symmetric under the real involution, then
the twists along $\pa\Si$ are zero.

\subsubsection{Weil-Petersson metric.}

Mimicking what done for surfaces with cusps,
we can define Hermitean pairings on $\mathcal{Q}(\Si)$
and $\mathcal{H}(\Si)$, where $\Si$ is a hyperbolic surface
with boundary.
In particular,
\begin{align*}
h(\mu,\nu)        & =\int_{\Si} \mu\,\ol{\nu}\cdot\lambda\\
h^{\vee}(\varphi,\psi) & =\int_{\Si} \frac{\varphi\,\ol{\psi}}{\lambda}
\end{align*}
where $\mu,\nu\in\mathcal{H}(\Si)$ and $\varphi,\psi\in\mathcal{Q}(\Si)$.

Thus, if $h=g+i\omega$, then $g$ is the Weil-Petersson Riemannian metric
and $\omega$ is the Weil-Petersson form. Write similarly
$h^{\vee}=g^{\vee}+i\omega^{\vee}$, where $g^{\vee}$
is the cometric dual to $g$
and $\omega^{\vee}$ is the Weil-Petersson bivector field.

Notice that $\omega$ and $\omega^{\vee}$ are degenerate. This can
be easily seen, because Wolpert's formula
$\omega=\sum_i d\ell_i\wedge d\tau_i$ still holds.
We can also conclude that the symplectic leaves of $\omega^{\vee}$
are exactly the fibers of the boundary length map $\ell_{\pa}$.

\subsubsection{Spines of hyperbolic surfaces with boundary.}

The spine construction can be carried on, even in a more natural
way, on hyperbolic surfaces with geodesic boundary.

In fact, given such a $\Si$ whose boundary components
are called $x_1,\dots,x_n$, we can define the distance from
$\pa\Si$ and so the valence of a point in $\Si$ and consequently
the spine $\mathrm{Sp}(\Si)$, with no need of further information.

Similarly, if $\Si$ has also nodes (that is, some holonomy
degenerates to a parabolic element), then $\mathrm{Sp}(\Si)$
is embedded inside the {\it visible components of $\Si$},
i.e. those components of $\Si$ that contain a boundary circle
of positive length.

The weight of an arc $\a_i\in \ua_{sp}$ dual to the
edge $e_i$ of $\mathrm{Sp}(\Si)$ is still defined
as the hyperbolic length of one of the two
projections of $e_i$ to $\pa\Si$.
Thus, the construction above gives a point
$w_{sp} \in |\Af(S,X)|\times(0,\infty)$.

\begin{center}
{\large
\begin{figurehere}
\psfrag{S}{$\Si$}
\psfrag{ai}{{\color{Red}$\a_i$}}
\psfrag{ei}{{\color{Blue}$e_i$}}
\psfrag{wi}{{\color{MidnightBlue}$w_i$}}
\includegraphics[width=0.7\textwidth]{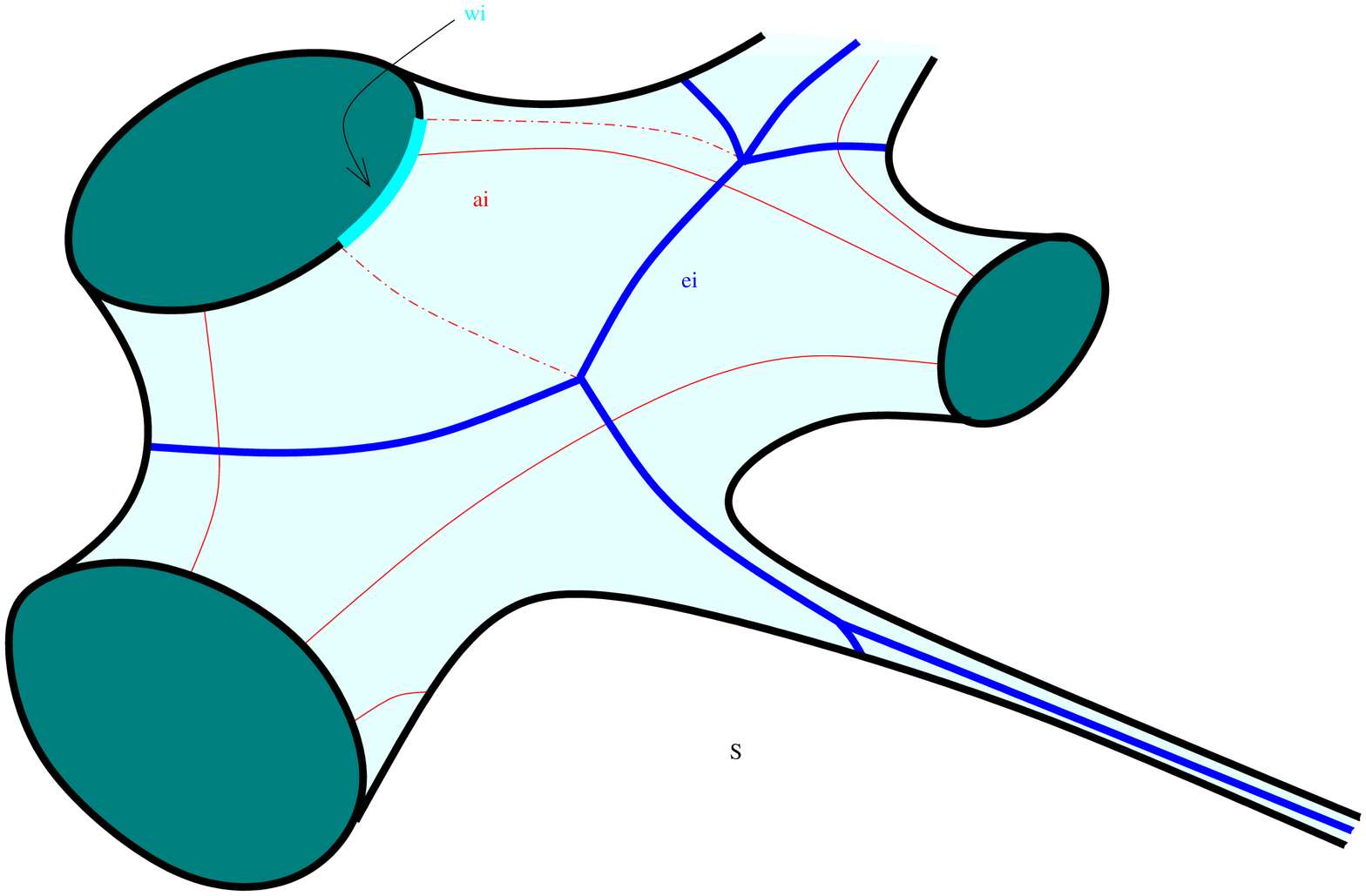}
\caption{Weights come from lengths of geodesic boundary arcs.}
\label{fig:surf-bound}
\end{figurehere}
}
\end{center}

It is easy to check (see \cite{mondello:wp} or \cite{mondello:triang})
that $w_{sp}$ converges to the $\ol{w}_{sp}$ defined above
when the hyperbolic surface with boundary converges to
a decorated surface with cusps in $\widehat{\Teich}(S,X)$.
Thus, the $\G(S,X)$-equivariant map
\[
\Phi:\widehat{\Teich}(S,X)\lra |\Af(S,X)|\times[0,\infty)
\]
reduces to $\Phi_0$ for decorated surfaces with cusps.

\begin{theorem}[Luo \cite{luo:decomposition}]
The restriction of $\Phi$ to smooth surfaces with no boundary
cusps gives a homeomorphism onto its image.
\end{theorem}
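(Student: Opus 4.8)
The plan is to prove the statement in three steps: continuity of $\Phi$ on the relevant locus, injectivity of $\Phi$ there, and then an invariance-of-domain argument to conclude that it is a homeomorphism onto its (necessarily open) image. For continuity, observe that if $\Si$ is a smooth hyperbolic surface whose boundary $\partial\Si$ consists of $n$ geodesics of strictly positive length, then $\dist(\cdot,\partial\Si)$ depends real-analytically on the hyperbolic structure, its ridge locus $\mathrm{Sp}(\Si)$ varies continuously, and the lengths of the projections $pr(e)$ of its edges to $\partial\Si$ — the weights of the dual arc system — are real-analytic functions of Fenchel-Nielsen parameters away from the walls where the combinatorial type of $\mathrm{Sp}(\Si)$ jumps. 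Across such a wall a valence-$4$ vertex resolves into two valence-$3$ vertices joined by an edge whose $\partial\Si$-projection length goes to $0$, which is precisely the face identification inside $|\Af(S,X)|$; hence $\Phi$ passes continuously through the walls as well.

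The heart of the proof is injectivity, which I would establish by reconstructing the marked hyperbolic surface from its metrized spine. Work on the stratum of $\widehat{\Teich}(S,X)$ on which $\mathrm{Sp}(\Si)$ has a fixed combinatorial type $\GG$; it suffices to treat the generic case of a trivalent $\GG$, with dual ideal triangulation $\ua$. Dropping the perpendiculars from the vertices of $\mathrm{Sp}(\Si)$ to $\partial\Si$ cuts the visible truncated surface into right-angled hexagons, one for each corner of a triangle of $\ua$, each having one side on $\partial\Si$ (whose length is the weight of the arc of $\ua$ crossing that corner), two ``spine sides'', and the two intervening perpendicular segments. By the cosine law for right-angled hexagons, the isometry type of each hexagon — and hence the way it glues to its neighbours along the common spine sides — is determined by the weights of the three arcs bounding the relevant triangle together with the length of the boundary component it faces; conversely these data can be prescribed freely and solved for, because the pertinent trigonometric relations arise as the gradient of a strictly convex function of the geometric coordinates. (In the cusped limit this is Penner's $\lambda$-length formalism, and the extension to geodesic boundary is the content of Luo's argument \cite{luo:decomposition}.) Thus $\Phi$ restricts to a real-analytic embedding on each cell, and since the reconstruction is canonical it agrees on common faces of adjacent cells, so $\Phi$ is injective on the whole locus of smooth surfaces with no boundary cusps.

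To finish, note that this locus is an open manifold of real dimension $6g-6+3n$ (e.g.\ by Fenchel-Nielsen coordinates for bordered surfaces, or because it is $\widehat{\Teich}(S,X)$ with its exceptional locus and the collapsed-boundary loci removed), while its $\Phi$-image lies in $\mathrm{int}\,|\Ao(S,X)|\times(0,\infty)$ — the interior, since all boundary components have positive length forces every marked point to be visible — which has dimension $(6g-7+3n)+1=6g-6+3n$ using the piecewise-affine manifold structure on $|\Ao(S,X)|$ recalled in \ref{ss:arcs}. A continuous injection between manifolds of the same dimension is open by invariance of domain, so $\Phi$ is a homeomorphism onto its image.

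The main obstacle is the reconstruction step: it must be carried out uniformly over all combinatorial types of the spine, and one has to verify the non-degeneracy of the resulting Jacobian — equivalently the strict convexity of the auxiliary functional — controlling in particular its behaviour when a spine edge degenerates. This is exactly where Luo's convexity argument does the essential work; the remaining steps are formal once it is in place.
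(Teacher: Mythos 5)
Your outer frame (continuity, then injectivity, then invariance of domain to get openness and hence a homeomorphism onto the image) is reasonable, and the dimension count and the observation that positive boundary lengths keep the image away from the locus where some boundary weight vanishes are fine. But the heart of the theorem --- injectivity, i.e.\ reconstruction of the marked hyperbolic surface from the weighted spinal arc system --- is not actually proved, and the local claim you base it on is false as stated. The isometry type of the right-angled hexagon cut out by a triangle of $\ua_{sp}$ is \emph{not} determined by the weights of the three arcs bounding it (together with boundary lengths): a weight $w_{\a_i}$ is the length of the projection of the dual spine edge to $\pa\Si$, and it splits as a sum of two contributions, one from each of the two hexagons adjacent to $\a_i$. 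Equivalently, in the notation of the sketch following the theorem, the weights are linear functions $W=RB$ of the $2M$ lengths of the hexagon sides lying on $\pa\Si$, and the fibre $E_W$ of this linear map is an $M$-dimensional affine space; so no triangle-by-triangle reconstruction from $W$ is possible, and your pieces (which are quadrilaterals with two right angles, two per arc, rather than ``hexagons, one per corner'') each see only part of a weight. The entire difficulty is to show that each fibre $E_W$ contains exactly one point satisfying the gluing constraint that the two sides of every arc have equal length; this is what Luo's functional is for --- strictly concave on $E_W$ with its unique maximum exactly at the gluing-compatible point (injectivity), $E_W$ nonempty (surjectivity). At precisely this step you write that the data ``can be prescribed freely and solved for, because the pertinent trigonometric relations arise as the gradient of a strictly convex function'' and cite Luo. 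Since the statement to be proved \emph{is} Luo's theorem, invoking his convexity argument without defining the functional, proving its concavity on $E_W$, or identifying its critical point with the glued-up surface leaves the essential content unproved --- as your own closing paragraph concedes. The continuity discussion and the invariance-of-domain endgame would be acceptable once that core is supplied, but as it stands the proposal reduces the theorem to itself.
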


The continuity of the whole $\Phi$ is proven in \cite{mondello:triang},
using Luo's result.\\

The key point of Luo's proof is the following.
Pick a generic hyperbolic surface with geodesic boundary $\Si$
and suppose that the spinal arc system is the
ideal triangulation $\ua_{sp}=\{\a_1,\dots,\a_M\}\in\Ao(\Si,X)$
with weight $w_{sp}$.
We can define the length $\ell_{\a_i}$ as the hyperbolic length
of the shortest geodesic $\tilde{\a}_i$
in the free homotopy class of $\a_i$.

The curves $\{\tilde{\a}_i\}$ cut $\Si$ into hyperbolic hexagons,
which are completely determined by $\{\ell_{\b_1},\dots
\ell_{\b_{2M}}\}$, where the $\b_j$'s are the
sides of the hexagons lying on $\pa\Si$.
Unfortunately, going from the $\ell_{\b_j}$'s to $w_{sp}$
is much easier than the converse. In fact, $w_{\a_1},\dots,
w_{\a_M}$ can be written as explicit
linear combinations of the $\ell_{\b_j}$'s:
in matrix notation, $B=(\ell_{\b_j})$ is
a solution of the system $W=RB$, where
$R$ is a fixed $(M\times 2M)$-matrix
(that encodes the combinatorics is $\ua_{sp}$)
and $W=(w_{\a_i})$.
Clearly, there is a whole affine space $E_W$ of dimension
$M$ of solutions of $W=RB$.
The problem is that a random point in $E_W$ would
determine hyperbolic structures on the hexagons
of $\Si\setminus\ua_{sp}$ that do not glue, because
we are not requiring the two sides of each $\a_i$
to have the same length.

Starting from very natural quantities associated to hyperbolic
hexagons with right angles, Luo defines a functional
on the space $(b_1,\dots,b_{2M})\in \R_{\geq 0}^{2M}$.
For every $W$, the space $E_W$ is not empty
(which proves the surjectivity of $\Phi$)
and the restriction of Luo's functional to $E_W$
is strictly concave
and achieves
its (unique) maximum exactly when $B=(\ell_{\b_j})$
(which proves the injectivity of $\Phi$).

The geometric meaning of this functional is still not entirely
clear, but it seems related to some volume of a three-dimensional
hyperbolic manifold associated to $\Si$.
Quite recently, Luo \cite{luo:rigidity}
(see also \cite{guo:parametrizations}) has introduced
a modified functional $F_{c}$, which depends on a parameter $c\in \R$,
and he has produced other realizations
of the Teichm\"uller space as a polytope,
and so different systems of ``simplicial'' coordinates.

\subsubsection{Surfaces with large boundary components.}

To close the circle, we must relate the limit of
$\Phi$ for surfaces whose boundary lengths diverge
to $\Psi_{JS}$. This is the topic of \cite{mondello:triang}.
Here, we only sketch the main ideas. To simplify the
exposition, we will only deal with smooth surfaces.

Consider an $X$-marked
hyperbolic surface with geodesic boundary $\Si$.
Define $\gr8 (\Si)$ to be the surface
obtained by gluing semi-infinite flat cylinders at $\pa\Si$
of lengths $(p_1,\dots,p_n)=\ell_{\pa}(\Si)$.

Thus, $\gr8(\Si)$ has a hyperbolic core and flat ends and the
underlying conformal structure is that of an $X$-punctured
Riemann surface. This {\it grafting} procedure defines a map
\[
( \gr8 ,\ell_{\pa}):\Teich^{\pa}(S,X)
\lra \Teich(S,X)\times \R_{\geq 0}^N
\]

\begin{center}
{\large
\begin{figurehere}
\psfrag{S}{$\Si$}
\includegraphics[width=0.6\textwidth]{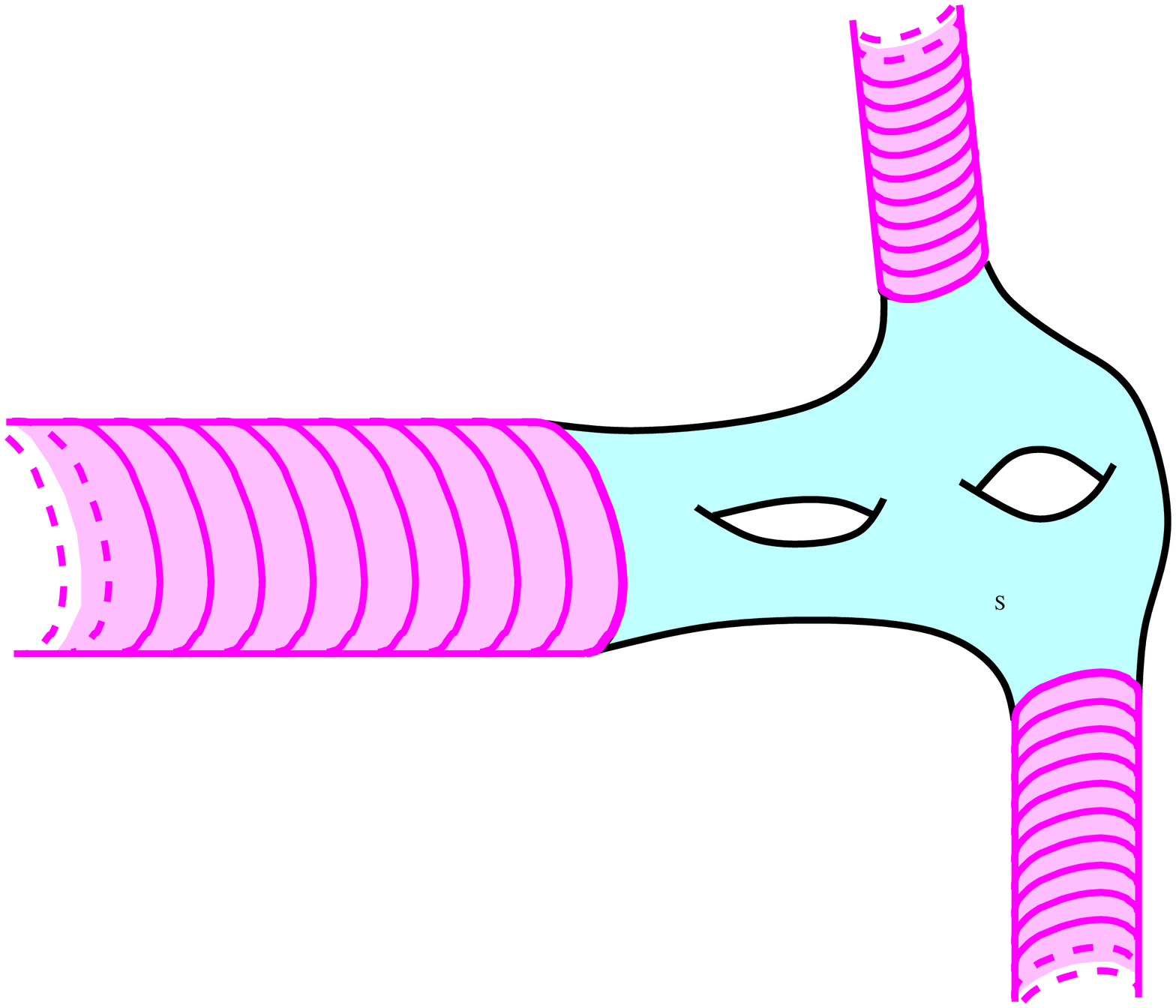}
\caption{A grafted surface $\gr8(\Si)$.}
\label{fig:grafted}
\end{figurehere}
}
\end{center}

\begin{proposition}[\cite{mondello:triang}]
The map $( \gr8 ,\ell_{\pa})$ is a $\G(S,X)$-equivariant
homeomorphism.
\end{proposition}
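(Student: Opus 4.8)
\emph{Plan.} The map is manifestly $\G(S,X)$-equivariant, since grafting uses only the hyperbolic metric on $\Si$ and its boundary lengths, both of which are natural. It is continuous: the hyperbolic metric varies (real-analytically) with the point of $\Teich^\pa(S,X)$, and attaching semi-infinite flat cylinders of circumferences $\ell_\pa(\Si)$ is a continuous operation on the underlying conformal structures. By construction $(\gr8,\ell_\pa)$ commutes with the projection to $\R_{\ge0}^X$, its second component \emph{being} $\ell_\pa$; so a point of the target is the same as a pair $(R,\up)$ with $R\in\Teich(S,X)$, $\up\in\R_{\ge0}^X$, and the fibre over $\up$ in the source is the leaf $\Teich^\pa(S,X)(\up)=\ell_\pa^{-1}(\up)$, a cell of real dimension $6g-6+2n=\dim_\R\Teich(S,X)$ (recall $\dim_\R\Teich^\pa(S,X)=6g-6+3n$). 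The strategy is: show the restriction $\gr8\colon\Teich^\pa(S,X)(\up)\rar\Teich(S,X)$ is a proper injection; then invariance of domain makes it open, properness makes its image closed, and the connectedness of $\Teich(S,X)$ forces it onto. Since the total map is continuous, proper and fibrewise bijective, it is then a homeomorphism (a proper continuous bijection onto a locally compact Hausdorff space).

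\emph{Properness.} A diverging sequence $[f_m\colon S\rar\Si_m]$ in $\Teich^\pa(S,X)$ either has some boundary length $\ell_{\pa,i}(\Si_m)\rar\infty$ — then the second component diverges in $\R_{\ge0}^X$ — or some interior simple closed geodesic with hyperbolic length tending to $0$ (the remaining possibility, an interior geodesic whose length diverges while the boundary lengths stay bounded, is reduced to a pinching via the collar lemma). In the pinching case the corresponding curve on $S$ stays short in extremal length after grafting, because the attached cylinders retract onto $\pa\Si_m$ and therefore neither shorten a core curve nor can they prevent a pinching curve from pinching; by the Mumford–Mahler criterion (compare Lemma~\ref{lemma:compact} and \ref{sss:metrized}), $\gr8(\Si_m)$ then leaves every compact subset of $\Teich(S,X)$. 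Hence $(\gr8,\ell_\pa)$ is proper.

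\emph{Injectivity, and the main obstacle.} Fix $\up$ and suppose $\Si_1,\Si_2\in\Teich^\pa(S,X)(\up)$ together with a marking-preserving biholomorphism $\gr8(\Si_1)\cong\gr8(\Si_2)=:R$. Transporting the hyperbolic-plus-flat metric of each grafted surface to $R$ yields two conformal metrics $\mu_1,\mu_2$ on $R$, each equal to the standard flat half-cylinder metric $\tfrac{p_i^2}{4\pi^2}|dw|^2/|w|^2$ of circumference $p_i$ on a punctured-disc neighbourhood of $x_i$, of constant curvature $-1$ on the complement of these ends, and with no atomic curvature along the interface circles (geodesic on both sides). Writing $\mu_2=e^{2u}\mu_1$, the function $u$ extends continuously across the punctures by $0$, since near $x_i$ the two metrics both coincide with the above standard model; and $u$ satisfies the curvature equation $\D_{\mu_1}u=K_{\mu_1}-K_{\mu_2}e^{2u}$ with $K_{\mu_j}\in\{-1,0\}$. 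The assertion to prove — this is the genuine content, carried out in \cite{mondello:triang} — is that $u\equiv0$, i.e.\ that $(R,\up)$ determines the cutting multicurve $\pa\Si\subset R$ and hence $\Si$; equivalently, that $R$ carries at most one conformal metric which is hyperbolic away from standard flat ends of prescribed circumferences. This is the main obstacle: a naive maximum principle does not close, precisely because of the loci where one metric is flat and the other hyperbolic, and one must exploit subharmonicity of $u$ on the flat ends together with the normalisation $u\to0$ at the punctures (and, ultimately, that the ends have the \emph{same} circumferences $p_i$ on both sides). Granting this uniqueness, the biholomorphism of $R$'s restricts to a marking-compatible isometry $\Si_1\cong\Si_2$, injectivity follows, and by the first paragraph $(\gr8,\ell_\pa)$ is a $\G(S,X)$-equivariant homeomorphism commuting with the projection to $\R_{\ge0}^X$.
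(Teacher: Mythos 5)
The paper itself gives no argument beyond the remark that the proof is a variation of Scannell--Wolf \cite{scannell-wolf:grafting}, so the relevant comparison is with that scheme. Your skeleton (equivariance, continuity, properness, fibrewise injectivity, then invariance of domain plus connectedness for surjectivity) is sensible, and the reduction of injectivity to the uniqueness of a conformal metric on $R$ that is hyperbolic away from standard flat semi-infinite ends of prescribed circumferences is the right way to phrase what is at stake. But that uniqueness is precisely the analytic heart of the proposition, and you do not prove it: you write ``granting this uniqueness'' and cite \cite{mondello:triang} for it --- which is circular, since the proposition being proved \emph{is} the statement of \cite{mondello:triang}. Worse, the route you sketch (a global maximum principle for the conformal factor $u$, normalised by $u\to 0$ at the punctures) is exactly the one you concede ``does not close'': at an interior maximum of $u$ lying in the hyperbolic region of $\mu_1$ and the flat region of $\mu_2$ the curvature comparison gives no contradiction, and no device is offered to overcome this. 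The variation of Scannell--Wolf that the paper has in mind is not a global uniqueness argument but an infinitesimal one: one shows that the differential of the grafting map is injective (a maximum-principle/elliptic argument applied to the \emph{linearized} equation for the conformal factor), and then properness plus the fact that a proper local homeomorphism onto a connected, simply connected manifold of the same dimension is a homeomorphism finishes the proof --- this also replaces your invariance-of-domain step. So the decisive idea is missing from the proposal.

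Two secondary points. First, your properness argument only treats length degenerations (a boundary length tending to $\infty$ or a systole tending to $0$); a sequence can diverge in $\Teich^{\pa}(S,X)$ through the marking while staying in a compact part of moduli space, and this case needs the standard supplement: write $\Si_m=\phi_m\cdot\Si'_m$ with $\Si'_m$ convergent and $\phi_m$ eventually nontrivial in $\G(S,X)$, and use equivariance together with proper discontinuity of the $\G(S,X)$-action on $\Teich(S,X)$ to conclude that $\gr8(\Si_m)$ diverges. Second, your observation that $u$ vanishes identically near the punctures (both metrics equal the standard flat model $\frac{p_i^2}{4\pi^2}|dw|^2/|w|^2$ there) is correct and would be a useful normalisation in any completed argument, but by itself it does not rescue the maximum principle, for the reason above.
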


The proof is a variation of Scannell-Wolf's
\cite{scannell-wolf:grafting} that finite grafting is
a self-homeomorphism of the Teichm\"uller space.

Thus, the composition of $(\gr8,\ell_{\pa})^{-1}$ and
$\Phi$ gives (after blowing up the locus $\{\ell_{\pa}=0\}$)
the homeomorphism
\[
\Psi:\Teich(S,X)\times \Delta_X\times[0,\infty) \lra |\Ao(S,X)|\times[0,\infty)
\]

\begin{proposition}[\cite{mondello:triang}]
The map $\Psi$ extends to a $\G(S,X)$-equivariant homeomorphism
\[
\Psi:\Teich(S,X)\times\Delta_X\times[0,\infty]\lra |\Ao(S,X)|\times[0,\infty]
\]
and $\Psi_{\infty}$ coincides with Harer-Mumford-Thurston's $\Psi_{JS}$.
\end{proposition}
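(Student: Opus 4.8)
The plan is to bootstrap from the facts already in hand: the map $\Psi$ is a $\G(S,X)$-equivariant homeomorphism of $\Teich(S,X)\times\Delta_X\times[0,\infty)$ onto $|\Ao(S,X)|\times[0,\infty)$ which commutes with the projections to $[0,\infty)$ (that coordinate being, up to a fixed positive factor, both the total boundary length of the hyperbolic surface with geodesic boundary produced by $(\gr8,\ell_\pa)^{-1}$ and the total weight of its spinal arc system), while $\Psi_{JS}$ restricts to a homeomorphism $\Teich(S,X)\times\Delta_X\rar|\Ao(S,X)|$ (a special case of the Jenkins--Strebel homeomorphism $\ol{\Teich}^\Delta(S,X)\rar|\Af(S,X)|$ established above). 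So it suffices to set $\Psi(\Si,\up,\infty):=(\Psi_{JS}(\Si,\up),\infty)$ and prove that $\Psi$, so extended to $\Teich(S,X)\times\Delta_X\times[0,\infty]$, is a homeomorphism: equivariance, continuity within each slice, and the homeomorphism property over $[0,\infty)$ are inherited, so the only new points are continuity of $\Psi$ and of $\Psi^{-1}$ along the slice $\{\infty\}$. This also establishes the identity $\Psi_\infty=\Psi_{JS}$ at the same time.

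The geometric core is a \emph{degrafting} / collapse statement. Take a sequence $(\Si_m,\up_m,L_m)$ with $\Si_m\rar\Si$ in $\Teich(S,X)$, $\up_m\rar\up$ in $\Delta_X$ and $L_m\rar\infty$, and let $Y_m$ be the hyperbolic surface with geodesic boundary with $\gr8(Y_m)=\Si_m$ and $\ell_\pa(Y_m)=L_m\up_m$. Rescale the flat metric on $\gr8(Y_m)$ by $1/L_m$: the grafted semi-infinite cylinders retain circumferences $w_{i,m}\rar w_i$, while the hyperbolic core, whose area $2\pi(2g-2+n)$ is fixed, becomes a bordered surface of total boundary length $\to 1$ and area $\to 0$, hence collapses (in the Gromov--Hausdorff sense) onto its spine $\mathrm{Sp}(Y_m)$ carrying the rescaled weights $w(\a)/L_m$. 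I would then identify this collapsed limit with the critical graph of the Jenkins--Strebel differential $\varphi$ attached to $(\Si,\up)$ by Theorem~\ref{thm:strebel}: the cylinders converge to the annular domains of $\varphi$ and the collapsing core to the $|\varphi|$-critical graph, so that both the combinatorial type of the spinal arc system and the rescaled weights converge to those of $\Psi_{JS}(\Si,\up)$. The inputs are the continuity of $\varphi$ in $(\Si,\up)$ and comparisons, uniform as $L\rar\infty$, between the horocyclic boundary-projection weights $w(\a)$ and the $|\varphi|$-lengths of the corresponding edges. This yields $\Psi(\Si_m,\up_m,L_m)\rar(\Psi_{JS}(\Si,\up),\infty)$, i.e.\ continuity at $\{\infty\}$.

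For continuity of $\Psi^{-1}$ at $\{\infty\}$, suppose $\Psi(\Si_m,\up_m,L_m)\rar(\ol{w}_\infty,\infty)$ with $\ol{w}_\infty\in|\Ao(S,X)|$; as $\Psi$ respects the last coordinate, $L_m\rar\infty$, and one must show $(\Si_m,\up_m)\rar\Psi_{JS}^{-1}(\ol{w}_\infty)$. If $(\Si_m,\up_m)$ remains in a compact subset of $\Teich(S,X)\times\Delta_X$, then any subsequential limit $(\Si',\up')$ satisfies $\Psi_{JS}(\Si',\up')=\ol{w}_\infty$ by the continuity just proved, hence $(\Si',\up')=\Psi_{JS}^{-1}(\ol{w}_\infty)$ by injectivity of $\Psi_{JS}$, and we are done. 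If instead $(\Si_m,\up_m)$ escapes every compact, then (by the Mumford--Mahler criterion applied to $\gr8(Y_m)$, or because $\up_m\rar\pa\Delta_X$) there are essential curves of extremal length $\rar 0$; arguing as in the proof that $\ol{\Psi}_{JS}$ is proper, by comparing the flat/hyperbolic metric with an auxiliary bounded-area one, one gets $\mathrm{sys}\big(\Psi(\Si_m,\up_m,L_m)\big)\rar 0$, so by Lemma~\ref{lemma:compact} the image diverges in $|\Ao(S,X)|/\G(S,X)$, contradicting $\ol{w}_\infty\in|\Ao(S,X)|$. Hence $\Psi^{-1}$ is continuous, and $\Psi:\Teich(S,X)\times\Delta_X\times[0,\infty]\rar|\Ao(S,X)|\times[0,\infty]$ is the asserted $\G(S,X)$-equivariant homeomorphism commuting with the projection to $[0,\infty]$.

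The step I expect to be the genuine obstacle is the collapse and identification of the second paragraph: showing, with estimates uniform as $L\rar\infty$, that the rescaled spine of a hyperbolic surface with geodesic boundary of diverging length converges to the Jenkins--Strebel critical graph. This amounts to controlling the geometry of the thin collars and of the right-angled hexagons cut out by the spinal arcs as the boundary lengths blow up, and to matching the weights $w(\a)$ with the $|\varphi|$-edge-lengths; it is the technical heart of \cite{mondello:triang}. Granting it, the rest is the by now familiar ``continuity $+$ properness $\Rightarrow$ homeomorphism'' pattern already used for $\Psi_{JS}$ and for Luo's map $\Phi$.
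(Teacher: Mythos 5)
Your proposal follows essentially the same route as the paper's own (explicitly informal) treatment: the paper also reduces the statement to the fact that, after rescaling, the hyperbolic core of a surface with very long geodesic boundary collapses onto its spine while the flat cylinder differential (extended by zero on the core) converges in $L^1_{red}$ to the Jenkins--Strebel differential, and it defers the uniform estimates to \cite{mondello:triang}. Your packaging via continuity at the $\{\infty\}$ slice plus a properness argument, with the collapse/identification step granted, matches the paper's sketch and correctly isolates where the real technical work lies.
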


The main point is to show that a surface $\Si$ with large boundaries
and with spine $\mathrm{Sp}(\Si)$ is very close in $\Teich(S,X)$
to the flat surface whose Jenkins-Strebel differential has
critical graph isomorphic to $\mathrm{Sp}(\Si)$ (as metrized
ribbon graphs).

To understand why this is reasonable, consider a sequence
of hyperbolic surfaces $\Si_m$
whose spine has fixed isomorphism type $\GG$
and fixed projective metric
and such that $\ell_{\pa}(\Si_m)=c_m(p_1,\dots,p_n)$,
where $c_m$ diverges as $m\rar \infty$.
Consider the grafted surfaces $\gr8(\Si_m)$ and
rescale them so that $\sum_i p_i=1$. The flat metric
on the cylinders is naturally induced by a holomorphic
quadratic differential, which has negative quadratic
residue at $X$. Extend this differential to zero on
the hyperbolic core.

Because of the rescaling,
the distance between the flat cylinders and
the spine goes to zero and the differential converges
in $L^1_{red}$ to a Jenkins-Strebel differential.

Dumas \cite{dumas:schwarzian} has shown that an
analogous phenomenon occurs for closed surfaces
grafted along a measured lamination $t\lambda$
as $t\rar +\infty$.

\subsubsection{Weil-Petersson form and Penner's formula.}

Using Wolpert's result and hyperbolic geometry,
Penner \cite{penner:WP-volumes} has proved that
pull-back of
the Weil-Petersson form on the space of decorated hyperbolic
surfaces with cusps, which can be identified to
$\Teich(S,X)\times\Delta_X$, can be neatly written 
in the following way.
Fix a triangulation $\ua=\{\a_1,\dots,\a_M\}\in\Ao(S,X)$.
For every $([f:S\rar \Si],\up)\in\Teich(S,X)\times\Delta_X$,
let $\tilde{\a}_i$ be the geodesic representative in
the class of $f_*(\a_i)$ and
call $a_i:=\ell(\tilde{\a}_i\cap\Si_{tr})$,
where $\Si_{tr}$ be the truncated hyperbolic surface.
Then
\[
\pi^*\omega_{WP}=\sum_{t\in T}( da_{t_1}\wedge da_{t_2}+
da_{t_2}\wedge da_{t_3}+da_{t_3}\wedge da_{t_1} )
\]
where $\pi:\Teich(S,X)\times\Delta_X\rar \Teich(S,X)$ is the
projection, $T$ is the set of ideal triangles in which
the $\tilde{\a}_i$'s decompose $\Si$, and the sides of
$t$ are $(\a_{t_1},\a_{t_2},\a_{t_3})$ in the cyclic
order induced by the orientation of $t$ (see Figure~\ref{fig:triang}).

\begin{center}
{\large
\begin{figurehere}
\psfrag{Str}{$\Si_{tr}$}
\psfrag{ai}{{\color{Red}$\a_{t_1}$}}
\psfrag{aj}{{\color{Red}$\a_{t_3}$}}
\psfrag{ak}{{\color{Red}$\a_{t_2}$}}
\psfrag{t}{$t$}
\includegraphics[width=0.6\textwidth]{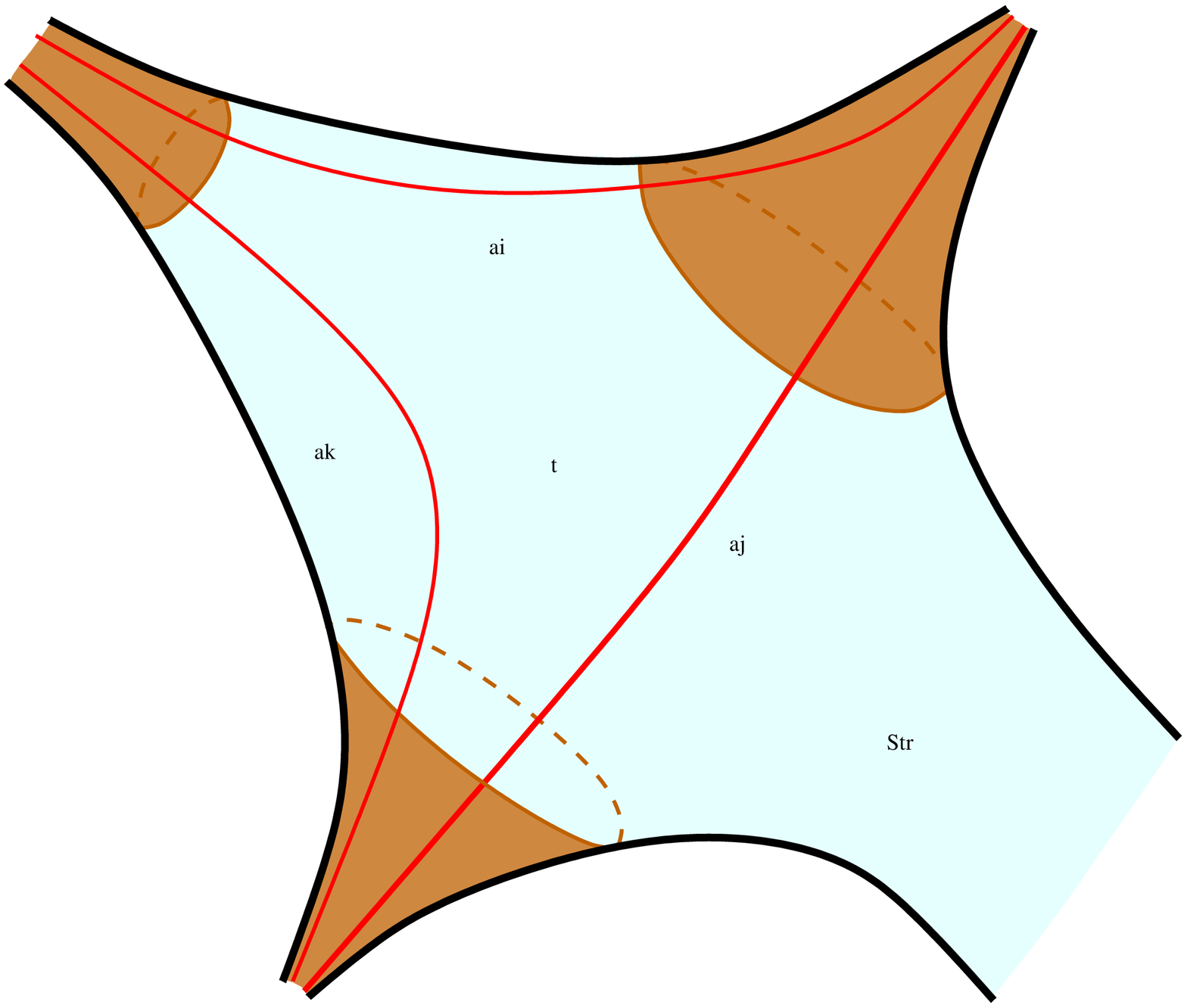}
\caption{An ideal triangle in $T$.}
\label{fig:triang}
\end{figurehere}
}
\end{center}

To work on $\M_{g,X}\times\Delta_X$ (for instance, to
compute Weil-Petersson volumes),
one can restrict to the interior of the cells
$\Phi_0^{-1}(|\ua|)$ whose associated system of arcs $\ua$
is triangulation and write the pull-back of $\omega_{WP}$
with respect to $\ua$.

\subsubsection{Weil-Petersson form for surfaces with boundary.}

Still using methods of Wolpert \cite{wolpert:symplectic},
one can generalize Penner's formula to hyperbolic
surfaces with boundary.
The result is better expressed
using the Weil-Petersson bivector field than the $2$-form.

\begin{proposition}[\cite{mondello:wp}]
Let $\Si$ be a hyperbolic surface with boundary components
$C_1,\dots,C_n$ and let $\ua=\{\a_1,\dots,\a_M\}$ be a
triangulation. Then the Weil-Petersson bivector
field can be written as
\[
\omega^{\vee}=\frac{1}{4}\sum_{b=1}^n
\sum_{\substack{y_i \in \a_i\cap C_b \\ y_j\in\a_j\cap C_b}}
\frac{\sinh(p_b/2-d_b(y_i,y_j))}{\sinh(p_b/2)}
\frac{\pa}{\pa a_i}\wedge \frac{\pa}{\pa a_j}
\]
where $a_i=\ell(\a_i)$ and $d_b(y_i,y_j)$ is the length of
the geodesic arc running from $y_i$ to $y_j$ along $C_b$
in the positive direction (according to the orientation
induced by $\Si$ on $C_b$).
\end{proposition}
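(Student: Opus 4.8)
Since $a_1,\dots,a_M$ (with $M=6g-6+3n$) form a global real-analytic coordinate system on $\Teich^{\pa}(S,X)$, it is enough to compute the Weil--Petersson Poisson brackets $\{a_i,a_j\}$, and then $\omega^{\vee}=\tfrac12\sum_{i,j}\{a_i,a_j\}\,\frac{\pa}{\pa a_i}\wedge\frac{\pa}{\pa a_j}$. In this language the claim is that $\{a_i,a_j\}$ equals $\tfrac14$ times the sum over boundary components $C_b$ of $\sum_{y_i\in\a_i\cap C_b,\,y_j\in\a_j\cap C_b}\sinh(p_b/2-d_b(y_i,y_j))/\sinh(p_b/2)$, the ordered pair $(y_j,y_i)$ contributing the opposite value by the identity $d_b(y_j,y_i)=p_b-d_b(y_i,y_j)$, so that the two orderings agree once one also swaps $\frac{\pa}{\pa a_i}\wedge\frac{\pa}{\pa a_j}$. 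In particular the formula predicts $\{a_i,a_j\}=0$ whenever $\tilde\a_i$ and $\tilde\a_j$ have no common endpoint circle, and it is consistent with the fact (recalled above) that the boundary lengths $p_b=\sum_{\b_k\subset C_b}b_k$ are Casimirs of $\omega^{\vee}$, since $\sinh(p_b/2-d)$ has vanishing total integral as $d$ runs once around $C_b$; both serve as sanity checks.

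The plan is to combine Wolpert's formula $\omega_{WP}=\sum_k d\ell_{\g_k}\wedge d\tau_{\g_k}$ (still valid for hyperbolic surfaces with geodesic boundary, as recalled, with the $p_b$ not appearing because they are Casimirs) with Wolpert's cosine formula $\pa_{\tau_{\g}}\ell_{\d}=\sum_{p\in\g\cap\d}\cos\theta_p$, which applies verbatim when $\d$ is a geodesic arc meeting $\pa\Si$ orthogonally and $\g$ is a simple closed geodesic crossing it transversally. Concretely, I would cut $\Si$ along the geodesic representatives $\tilde\a_1,\dots,\tilde\a_M$ into right-angled hyperbolic hexagons, whose shape is governed by the lengths $b_k=\ell(\b_k)$ of the boundary segments cut out by the arc endpoints; each $a_i$ is then an explicit function of the $b_k$ through the hexagon relations, and conversely. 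Computing $\{a_i,a_j\}$ via Wolpert's formula for a pants decomposition adapted to the $\tilde\a$'s reduces, after the chain rule, to (i) the twist derivatives $\pa_{\tau_{\g_k}}a_j$, given by the cosine formula, and (ii) the length derivatives $\pa_{\ell_{\g_k}}a_j$, which one evaluates inside the hexagons. Substituting these into the hexagon side-length identities and using $\sinh(x-y)=\sinh x\cosh y-\cosh x\sinh y$ is exactly what produces the kernel $\sinh(p_b/2-d_b(y_i,y_j))/\sinh(p_b/2)$, the half-length $p_b/2$ entering because the relevant right-angled hexagon relation for a boundary geodesic of length $p_b$ is naturally phrased through $p_b/2$; keeping track of the orientation of $C_b$ then fixes the sign and the constant $\tfrac14$.

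The main obstacle is the combinatorics and the sign bookkeeping: one must show that, after summing the hexagon contributions over all the gluings, the only surviving terms are those attached to a boundary circle shared by $\a_i$ and $\a_j$ — i.e.\ the a priori contributions from interior pants curves cancel in pairs, leaving no ``bulk'' term — and that the residual coefficient is precisely the asserted hyperbolic ratio with the correct sign. This is the delicate heart of the argument; once it is done, consistency with Penner's formula for the cusped limit $p_b\to 0$ and with Wolpert's $\omega=\sum d\ell\wedge d\tau$ are immediate. An alternative route avoiding the explicit dissection is to use a Gardiner-type variational formula for the differential of a geodesic-arc-length function and to evaluate $\omega^{\vee}(da_i,da_j)$ as a contour integral along $\pa\Si$, where the same identity reappears through residue computations; the bookkeeping difficulty, however, is of exactly the same nature.
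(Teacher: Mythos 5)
What you have written is a plan, and the step you yourself isolate as ``the delicate heart'' --- showing that after the chain rule every contribution not attached to a boundary circle shared by $\a_i$ and $\a_j$ cancels, and that the surviving coefficient is exactly $\sinh(p_b/2-d_b(y_i,y_j))/\sinh(p_b/2)$ with the factor $\frac14$ --- is precisely the content of the proposition; nothing in the proposal produces the kernel, it is only asserted that the hexagon identities ``are exactly what produces'' it. Moreover, the framework you set up makes that step intractable as described. On a bordered surface there is no pants decomposition ``adapted to the $\tilde{\a}_i$'s'': the $\tilde{\a}_i$ are arcs, not closed curves, and the right-angled hexagon decomposition they induce is not subordinate to any pants decomposition, so the Fenchel--Nielsen coordinates $(\ell_{\g_k},\tau_{\g_k})$ you feed into Wolpert's formula are unrelated to the hexagon data. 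In particular, while the twist derivatives $\pa a_j/\pa\tau_{\g_k}$ are indeed given by a cosine-type formula (its extension to orthogeodesic arcs needs justification, but is true), the length derivatives $\pa a_j/\pa\ell_{\g_k}$ at fixed remaining Fenchel--Nielsen coordinates are not local quantities that can be ``evaluated inside the hexagons'': varying one pants-curve length deforms all the hexagons simultaneously, and your scheme provides no closed expression for them. So the cancellation of ``bulk'' terms is not deferrable bookkeeping; as set up, the individual terms are not even computable, and no mechanism for their cancellation is offered.

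The proof the paper points to is organized to avoid exactly this difficulty by passing to the double $d\Si$: since $\ua$ is a triangulation, each doubled arc is a simple closed geodesic of length $2a_i$, and the collection of doubled arcs is a pair of pants decomposition of $d\Si$ (each right-angled hexagon doubles, along its boundary sides, to a pair of pants). Wolpert's formula $\omega^{\vee}=-\sum_i\pa_{\ell_i}\wedge\pa_{\tau_i}$ is then applied in Fenchel--Nielsen coordinates adapted to the arcs themselves, and the whole computation reduces to the first-order effect on the $a_i$'s of twisting $d\Si$ along the $\a_j$'s --- a local hyperbolic-trigonometric calculation, which is where the ratio $\sinh(p_b/2-d_b(y_i,y_j))/\sinh(p_b/2)$ and the constant $\frac14$ actually arise, with no global cancellation left to verify. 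Your sanity checks (antisymmetry via $d_b(y_j,y_i)=p_b-d_b(y_i,y_j)$, the boundary lengths being Casimirs, compatibility with Penner's formula in the cusped limit) are correct, but they are consistency checks, not a substitute for the missing computation.
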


The idea is to use Wolpert's formula
$\omega^{\vee}=-\sum_i \pa_{\ell_i}\wedge \pa_{\tau_i}$
on the double $d\Si$ of $\Si$ with the pair of pants
decomposition induced by doubling the arcs $\{\a_i\}$.
Then one must compute the (first-order) effect on
the $a_i$'s of twisting $d\Si$ along $\a_j$.

Though not immediate, the formula above can be shown
to reduce to Penner's, when the boundary lengths go to zero,
as we approximate $\sinh(x)\approx x$ for small $x$.
Notice that Penner's formula shows that $\omega$ linearizes
(with constant coefficients!) in the coordinates given
by the $a_i$'s.

More interesting is to analyze what happens for
$(\Si,t\up)$ with $\up\in\Delta_X$,
as $t\rar +\infty$.
Assume the situation is generic and so $\Psi_{JS}(\Si)$
is supported on a triangulation, whose dual graph is $\GG$.

Once again, the formula dramatically simplifies 
as we approximate $2\sinh(x)\approx \exp(x)$ for $x\gg 0$.
Under the rescalings $\dis \tilde{\omega}^{\vee}=c^2\omega^{\vee}$
and $\dis
\tilde{w}_i=w_i/c$ with $c=\sum_b p_b/2$,
we obtain that
\[
\lim_{t\rar \infty} \tilde{\omega}^{\vee}=
\omega^{\vee}_\infty:=\frac{1}{2}\sum_{v\in E_0(\GG)}
\left(
\frac{\pa}{\pa \tilde{w}_{v_1}}\wedge \frac{\pa}{\pa\tilde{w}_{v_2}}+
\frac{\pa}{\pa \tilde{w}_{v_2}}\wedge \frac{\pa}{\pa\tilde{w}_{v_3}}+
\frac{\pa}{\pa \tilde{w}_{v_3}}\wedge \frac{\pa}{\pa\tilde{w}_{v_1}}
\right)
\]
where $v=\{v_1,v_2,v_3\}$ and $\s_0(v_j)=v_{j+1}$ (and $j\in\Z/3\Z$).

\begin{center}
{\large
\begin{figurehere}
\psfrag{v}{$v$}
\psfrag{ei}{{\color{Blue}$e_{v_1}$}}
\psfrag{ej}{{\color{Blue}$e_{v_2}$}}
\psfrag{ek}{{\color{Blue}$e_{v_3}$}}
\includegraphics[width=0.4\textwidth]{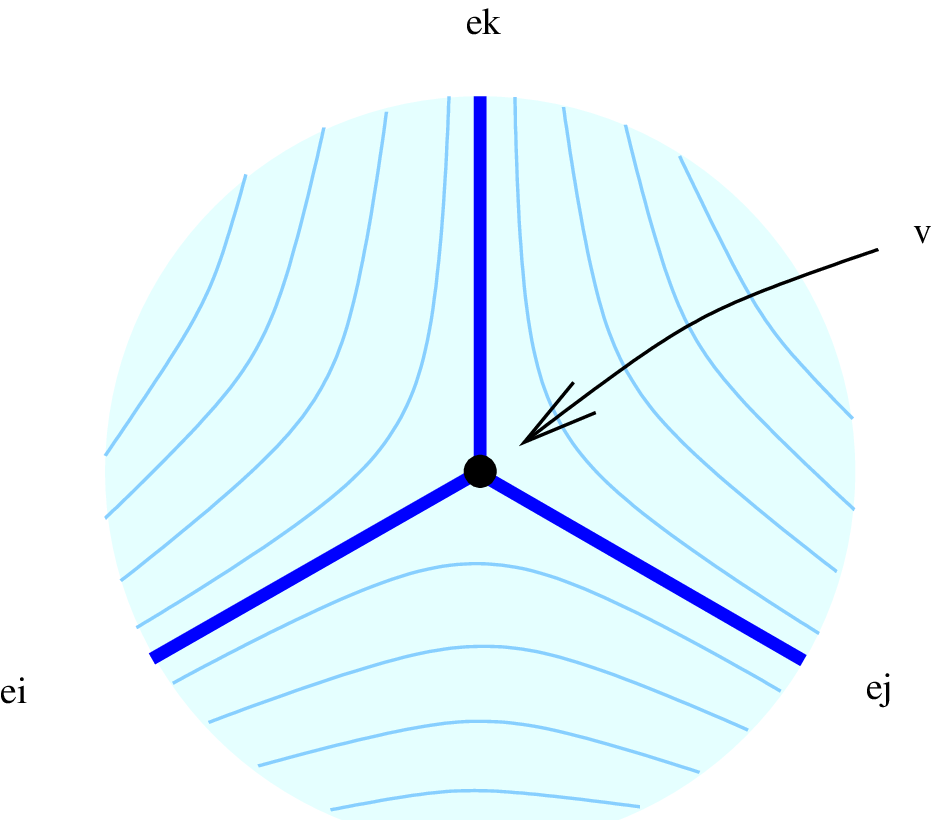}
\caption{A trivalent vertex $v$ of $\GG$.}
\label{fig:trivalent}
\end{figurehere}
}
\end{center}

Thus, the Weil-Petersson symplectic structure is again linearized
(and with constant coefficients!), but in the system of coordinates
given by the $w_j$'s, which are in some sense dual to the $a_i$'s.

It would be nice to exhibit a clear geometric argument for
the perfect symmetry of these two formulae.

\section{Combinatorial classes}\label{sec:combinatorial}
\subsection{Witten cycles.}\label{ss:witten}

Fix as usual a compact oriented surface $S$ of genus $g$ and a subset
$X=\{x_1,\dots,x_n\}\subset S$ such that $2g-2+n>0$.

We introduce some remarkable $\G(S,X)$-equivariant subcomplexes 
of $\Af(S,X)$, which define interesting cycles in the homology
of $\Mbar^K_{g,X}$ as well as in the Borel-Moore
homology of $\M_{g,X}$ and so, by Poincar\'e duality,
in the cohomology of $\M_{g,X}$ (that is, of $\G(S,X)$).

These subcomplexes are informally defined as the locus
of points of $|\Ao(S,X)|$, whose associated ribbon graphs
have prescribed odd valences of their vertices.
It can be easily shown that, if we assign even valence to some vertex,
the subcomplex we obtain is not a cycle (even with $\Z/2\Z$ coefficients!).

We follow Kontsevich (\cite{kontsevich:intersection}) for the
orientation of the combinatorial cycles, but an alternative
way is due to Penner \cite{penner:poincare} and
Conant and Vogtmann \cite{conant-vogtmann:onatheorem}.

Later, we will mention
a slight generalization of the combinatorial classes by
allowing some vertices to be marked.

Notice that we are going to use the cellularization of the moduli
space of curves given by $\Psi_{JS}$, and so we will identify
$\Mbar^\Delta_{g,X}$ with the orbispace $|\Af(S,X)|/\G(S,X)$.
As the arguments will be
essentially combinatorial/topological, any of the decompositions
described before would work.

\subsubsection{Witten subcomplexes.}

Let
$m_*=(m_0,m_1,\dots)$ be a sequence of nonnegative integers
such that
\[
\sum_{i\geq 0} (2i+1)m_i=4g-4+2n
\]
and define $(m_*)!:=\prod_{i\geq 0}m_i!$ and $r:=\sum_{i\geq 0} i\, m_i$.

\begin{definition}
The {\it combinatorial subcomplex} $\Af_{m_*}(S,X)\subset \Af(S,X)$
is the smallest simplicial subcomplex that contains
all proper simplices $\ua\in\Ao(S,X)$ such that
$S\setminus\ua$ is the disjoint union of exactly $m_i$
polygons with $2i+3$ sides.
\end{definition}

It is convenient to set $|\Af_{m_*}(S,X)|_{\R}:=|\Af_{m_*}(S,X)|\times\R_+$.
Clearly, this subcomplex is $\G(S,X)$-equivariant.
Hence, if we call
$\Mbarcomb_{g,X}:=\Mbar^\Delta_{g,X}\times\R_+\cong|\Af(S,X)|_{\R}/\G(S,X)$,
then we can define $\Mbarcomb_{m_*,X}$ to be the subcomplex
of $\Mbarcomb_{g,X}$ induced by $\Af_{m_*}(S,X)$.

\begin{remark}
We can introduce also univalent vertices by allowing
$m_{-1}>0$. It is still possible to define
the complexes $\Af_{m_*}(S,X)$
and $\mathfrak{A}^{\circ}_{m_*}(S,X)$, just
allowing (finitely many) contractible loops (i.e. unmarked tails
in the corresponding ribbon graph picture).
However, $\Af_{m_*}(S,X)$ would no longer be
a subcomplex of $\Af(S,X)$. Thus, we should construct
an associated family of Riemann surfaces
over $\Mbarcomb_{m_*,X}$ (which can be easily done)
and consider the
classifying map $\Mbarcomb_{m_*,X} \rar \Mbarcomb_{g,X}$,
whose existence is granted by the universal property
of $\Mbar_{g,X}$, but
which would no longer be cellular.
\end{remark}

For every $\up \in \Delta_X\times\R_+$
call
$\Mbarcomb_{g,X}(\up):=\bar{\ell}_{\pa}^{-1}(\up)
\subset \Mbarcomb_{g,X}$
and define $\Mbarcomb_{m_*,X}(\up):=\Mbarcomb_{m_*,X}\cap
\Mbarcomb_{g,X}(\up)$.

Notice that the dimensions of the slices are
the expected ones because in every cell they are described
by $n$ independent linear equations.
%
\subsubsection{Combinatorial $\psi$ classes.}

Define $L_i$ as the space of couples $(\GG,y)$, where
$\GG$ is a $X$-marked metrized ribbon graph
in $\Mbarcomb_{g,X}(\{p_i>0\})$ and
$y$ is a point of $|G|\subset |\GG|$ belonging to an edge that
borders the $x_i$-th hole.

Clearly $L_i \lra \Mbarcomb_{g,X}(\{p_i>0\})$ is a topological
bundle with fiber homeomorphic to $S^1$. It is easy to see that,
for a fixed $\up\in \D_X\times\R_+$ such that $p_i>0$,
the pull-back of $L_i$ via
\[
\xi_{\up}:\Mbar_{g,X} \lra \Mbarcomb_{g,X}(\up)
\]
is isomorphic (as a topological bundle) to the sphere
bundle associated to $\mathcal{L}^{\vee}_i$.

\begin{lemma}[\cite{kontsevich:intersection}]\label{lemma:eta}
Fix $x_i$ in $X$ and $\up\in \Delta_X\times\R_+$
such that $p_i>0$.
Then on every simplex $|\ua|(\up) \in \Mbarcomb_{g,X}(\up)$ define
\[
\ol{\eta}_i|_{|\ua|(\up)}:=\sum_{1\leq s<t\leq k-1}
d\tilde{e}_s \wedge d\tilde{e}_t
\]
where $\dis\tilde{e}_j=\frac{\ell(e_j)}{2 p_i}$
and
$x_i$ marks a hole with cyclically ordered sides
$(e_1,\dots,e_k)$. These $2$-forms glue to
give a piecewise-linear $2$-form
$\ol{\eta}_i$ on $\Mbarcomb_{g,X}(\up)$,
that represents $c_1(L_i)$. Hence,
the pull-back class
$\xi_{\up}^*[\ol{\eta}_i]$ is exactly
$\psi_i=c_1(\mathcal{L}_i)$ in $H^2(\Mbar_{g,X})$.
\end{lemma}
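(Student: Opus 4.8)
Here is the proof plan.

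\medskip

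The statement splits into two essentially independent parts: a purely combinatorial one, that the piecewise-linear $2$-form $\ol{\eta}_i$ is well defined on $\Mbarcomb_{g,X}(\up)$ and represents the Euler class $c_1(L_i)$ of the circle bundle $\pi\colon L_i\to \Mbarcomb_{g,X}(\up)$; and a comparison, via the contraction $\xi_{\up}$, with $\psi_i$ on $\Mbar_{g,X}$, which is immediate once the first part is in hand. For well-definedness: on a slice $\Mbarcomb_{g,X}(\up)$ the edges $e_1,\dots,e_k$ bounding the $x_i$-th hole satisfy $\ell(e_1)+\dots+\ell(e_k)=p_i$, so $\sum_j d\tilde{e}_j=0$; substituting $d\tilde{e}_k=-\sum_{j<k}d\tilde{e}_j$ turns $\sum_{1\le s<t\le k-1}d\tilde{e}_s\wedge d\tilde{e}_t$ into the cyclically symmetric expression $\sum_{1\le s<t\le k}d\tilde{e}_s\wedge d\tilde{e}_t$, which depends only on the cyclic (ribbon) order of the edges of the hole, hence is invariant under the automorphisms of the ribbon graph and descends to each orbicell. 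Continuity across a codimension-one face is checked edge by edge: such a face corresponds to contracting an edge $e$ of the underlying ribbon graph; if $e$ does not border the $x_i$-th hole the formula is unchanged, and if it does, then $\ell(e)$ is identically zero along the face, so $d\tilde{e}$ restricts to zero there and the symmetric sum restricts to the sum over the surviving edges, which is exactly the formula read off from the smaller cell. On each simplex $\ol{\eta}_i$ is a constant-coefficient form in the affine coordinates $\tilde{e}_j$, hence closed, so these local forms patch to a global closed piecewise-linear $2$-form on $\Mbarcomb_{g,X}(\up)$.

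The heart of the matter is the identification $[\ol{\eta}_i]=c_1(L_i)$. The fibre of $L_i$ over a metrized ribbon graph $\GG$ is the boundary circle of the $x_i$-th hole, of total length $p_i$, subdivided into arcs by the $k$ boundary edges. Over the open cell of $\GG$, choose one boundary edge of the hole and trivialize $L_i$ by the section sending $\GG$ to the initial vertex of that edge; then $\frac{1}{p_i}(\text{distance from the chosen vertex})$ is a fibre coordinate valued in $\R/\Z$. Passing between two such trivializations (over a common face, or between adjacent top cells) changes this coordinate by the base-valued function recording the offset between the two chosen vertices, and these offsets form the $\underline{\R/\Z}$-valued \v{C}ech cocycle classifying $L_i$. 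Running the \v{C}ech--de Rham machine — equivalently, patching the local angular $1$-forms $\tfrac{1}{p_i}\,d(\text{arclength})$ on the total space with a partition of unity subordinate to a refinement of the edge-arcs and computing the exterior derivative — one finds that the jump $\tfrac{1}{p_i}\,d\ell(e_s)$ occurring at each vertex contributes, after summation over the hole, exactly $\sum_{s<t}d\tilde{e}_s\wedge d\tilde{e}_t$, the normalization $\tilde{e}_j=\ell(e_j)/(2p_i)$ absorbing the universal constant. Hence $[\ol{\eta}_i]=e(L_i)=c_1(L_i)$. This combinatorial curvature computation, together with the bookkeeping of orientations, is the only genuine obstacle, and it is essentially the computation of \cite{kontsevich:intersection}.

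Finally, pull back along $\xi_{\up}\colon\Mbar_{g,X}\to\Mbarcomb_{g,X}(\up)$. By the identification recorded just above, $\xi_{\up}^*L_i$ is the unit circle bundle of $\mathcal{L}_i^{\vee}$, and with the orientation conventions fixed following \cite{kontsevich:intersection} — so that the combinatorial circle $L_i$, running around the $x_i$-th hole in the direction prescribed by the ribbon structure, is matched (with a sign) to the complex orientation, which is checked near $x_i$ by writing the Strebel differential as $-(p_i/2\pi)^2(dz/z)^2$ and noting that a boundary point of the hole determines a tangent direction at $x_i$ by a degree-one map — one gets $\xi_{\up}^*c_1(L_i)=c_1(\mathcal{L}_i)=\psi_i$. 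Combining with the previous paragraph, $\xi_{\up}^*[\ol{\eta}_i]=\psi_i$ in $H^2(\Mbar_{g,X})$, as claimed.
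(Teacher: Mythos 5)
Your proposal is correct and follows essentially the route the paper intends: the paper itself offers no argument beyond the remark that the proof is ``very easy'' (deferring to Kontsevich, with Figure~\ref{fig:chern} indicating the normalized angular coordinates $\ol{\phi}_j$ on the fiber of $L_i$), and your plan --- well-definedness from the perimeter constraint and cyclic invariance, continuity under edge contraction, the Euler-class computation via arclength trivializations over the cells, and the identification of $\xi_{\up}^*L_i$ with the circle bundle of $\mathcal{L}_i^{\vee}$ through the Jenkins--Strebel differential --- is exactly that standard argument. The only place you defer rather than compute is the curvature/\v{C}ech evaluation that fixes the overall constant (where the paper's normalization $\tilde{e}_j=\ell(e_j)/2p_i$ versus $\ell(e_j)/p_i$ would be checked), but this is the same routine computation the paper itself leaves to \cite{kontsevich:intersection}.
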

\begin{center}
\begin{figurehere}
\psfrag{xi}{$x_i$}
\psfrag{e1}{$e_1$}
\psfrag{e2}{$e_2$}
\psfrag{e3}{$e_3$}
\psfrag{e4}{$e_4$}
\psfrag{e5}{$e_5$}
\psfrag{e6}{$e_6$}
\psfrag{e7}{$e_7$}
\psfrag{p1}{$\ol{\phi}_1$}
\psfrag{p2}{$\ol{\phi}_2$}
\psfrag{p3}{$\ol{\phi}_3$}
\psfrag{p4}{$\ol{\phi}_4$}
\psfrag{p5}{$\ol{\phi}_5$}
\psfrag{p6}{$\ol{\phi}_6$}
\psfrag{p7}{$\ol{\phi}_7$}
\psfrag{y}{$y$}
\includegraphics[width=0.45\textwidth]{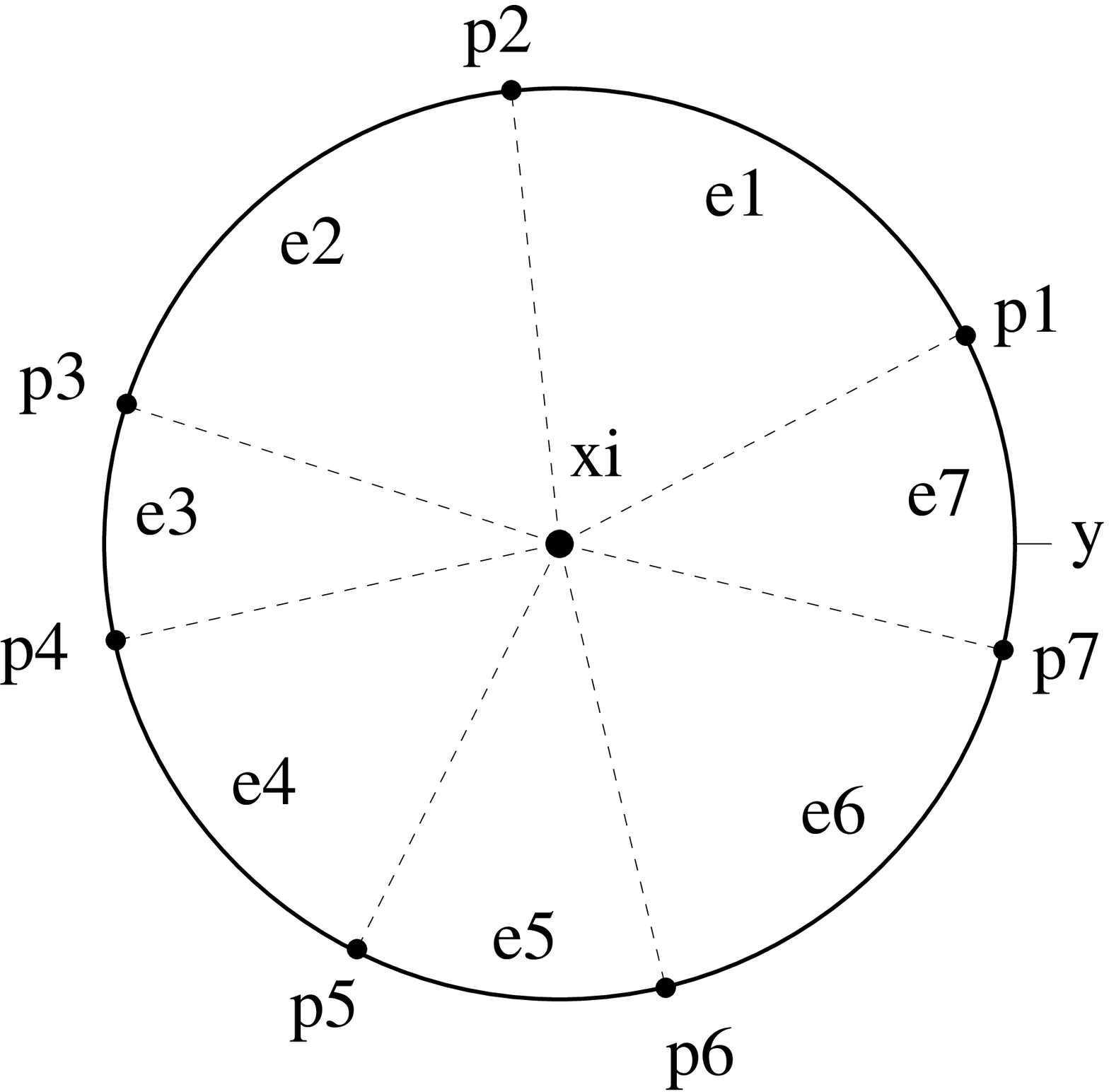}
\caption{A fiber of the bundle $L_i$ over a hole with $7$ sides.}
\label{fig:chern}
\end{figurehere}
\end{center}

The proof of the previous lemma is very easy.

\subsubsection{Orientation of Witten subcomplexes.}

The following lemma says that the $\eta$ forms can be assembled
in a piecewise-linear ``symplectic form'', that can be used
to orient maximal cells of Witten subcomplexes.

\begin{lemma}[\cite{kontsevich:intersection}]\label{lemma:orient}
For every $\up\in\Delta_X\times\R_+$
the restriction of
\[
\ol{\Omega}:=\sum_{i=1}^n p_i^2 \ol{\eta}_i
\]
to the maximal simplices of $\Mbarcomb_{m_*,X}(\up)$
is a non-degenerate symplectic form. Hence,
$\ol{\Omega}^r$ defines an orientation on $\Mbarcomb_{m_*,X}(\up)$.
Also, $\ol{\Omega}^r\wedge\bar{\ell}_{\pa}^*\mathrm{Vol}_{\R^X}$
is a volume form on $\Mbarcomb_{m_*,X}$.
\end{lemma}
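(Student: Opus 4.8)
The plan is to follow Kontsevich \cite{kontsevich:intersection}, reducing the statement to a finite linear-algebra computation on a single top cell. By Lemma~\ref{lemma:eta} each $\ol\eta_i$ is the restriction to $\Mbarcomb_{g,X}(\up)$ of a globally defined piecewise-linear $2$-form, hence so is $\ol\Omega=\sum_i p_i^2\ol\eta_i$, and so is its restriction to $\Mbarcomb_{m_*,X}(\up)$. Since $\ol\Omega$ is already defined on the whole complex, it is enough to check non-degeneracy in the interior of each top-dimensional simplex: granting that, $\ol\Omega^{d}$ with $2d=\dim\Mbarcomb_{m_*,X}(\up)$ is a piecewise-linear form of top degree which is nowhere zero, i.e. an orientation. (The automorphism group of a ribbon graph permutes edges and preserves cyclic orders, so it preserves the combinatorial data defining $\ol\Omega$ and fixes $\ol\Omega^{d}$; thus the orientation descends to the orbispace quotient.)

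First I would fix a maximal simplex: a proper arc system $\ua\in\Ao(S,X)$ whose dual ribbon graph $\GG=\GG_\ua$ has exactly $m_i$ vertices of valence $2i+3$, so that every vertex of $\GG$ is odd-valent. Using the edge-lengths $\ell_e$, $e\in E_1(\GG)$, as affine coordinates on $|\ua|$, the slice $|\ua|(\up)$ is the affine subspace where the perimeters $p_1,\dots,p_n$ take the prescribed values. Substituting $\tilde e_j=\ell_{e_j}/(2p_i)$ into the formula of Lemma~\ref{lemma:eta} and summing over $i$ turns $\ol\Omega$, in these coordinates, into a constant-coefficient skew form $\tfrac14\sum_i\sum_{e,e'}d\ell_e\wedge d\ell_{e'}$, with $e,e'$ ranging over ordered pairs of sides of the $i$-th hole; note that this form does not involve $\up$. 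It is then equivalent to prove that the kernel of $\ol\Omega$, viewed as a skew form on $\R^{E_1(\GG)}$, has dimension exactly $n$ and is sent isomorphically onto $\R^X$ by the differential of $\bar\ell_\pa$. Indeed, $\ker\ol\Omega$ is then a complement to the tangent space of $|\ua|(\up)$, and $\ol\Omega$ is automatically non-degenerate on $|\ua|(\up)$: if $\xi$ is tangent to $|\ua|(\up)$ and $\ol\Omega(\xi,\cdot)$ vanishes along $|\ua|(\up)$, then, since it vanishes on $\ker\ol\Omega$ as well, it vanishes identically, so $\xi\in\ker\ol\Omega$ and hence $\xi=0$.

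The core of the proof is this kernel computation — the only genuinely non-formal step, and the place where the odd valences enter. It is a local analysis at the vertices of $\GG$: the elementary building block is the skew bilinear form $\sum_{1\le a<b\le k}\epsilon_a\wedge\epsilon_b$ on $\R^k$, which is non-degenerate when $k$ is even but has a one-dimensional kernel (spanned by the alternating vector) when $k$ is odd. Every vertex of $\GG$ has odd valence, so each would on its own create a degeneracy direction; the content of Kontsevich's argument is that, once these directions are assembled over the graph and confronted with the $n$ linear relations ``$p_i$ fixed'', the kernel of $\ol\Omega$ is cut down to exactly the $n$-dimensional space on which the perimeters are coordinates. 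I expect this parity and sign bookkeeping over $\GG$ to be the main obstacle; it is carried out in \cite{kontsevich:intersection}, with alternative routes to the orientation in Penner \cite{penner:poincare} and Conant--Vogtmann \cite{conant-vogtmann:onatheorem}. The mechanism fails as soon as some valence is even, which is exactly why (as remarked above) the even-valence subcomplexes are not cycles.

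Granting non-degeneracy on every top cell, $\ol\Omega^{d}$ is a nowhere-vanishing piecewise-linear form of top degree on $\Mbarcomb_{m_*,X}(\up)$, which is the asserted orientation. For the last statement, observe that on each top cell of $\Mbarcomb_{m_*,X}$ the map $\bar\ell_\pa$ is an affine submersion onto $\Delta_X\times\R_+$ whose fibres are the slices $|\ua|(\up)$; since $\ol\Omega^{d}$ restricts to a volume form on each fibre and $\bar\ell_\pa^*\mathrm{Vol}_{\R^X}$ is the pullback of a volume form on the $n$-dimensional base, the product $\ol\Omega^{d}\wedge\bar\ell_\pa^*\mathrm{Vol}_{\R^X}$ is a nowhere-zero form of top degree on all of $\Mbarcomb_{m_*,X}$, i.e. a volume form.
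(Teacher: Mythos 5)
Your reduction is sound as far as it goes: restricting to a maximal cell, writing $\ol{\Omega}$ in the edge-length coordinates as a constant-coefficient skew form, and observing that non-degeneracy on the slice $|\ua|(\up)$ would follow from knowing that $\ker\ol{\Omega}$ on $\R^{E_1(\GG)}$ is $n$-dimensional and transverse to $\ker(d\bar\ell_{\pa})$, is a legitimate reformulation; the closing argument for $\ol{\Omega}^r\wedge\bar\ell_{\pa}^*\mathrm{Vol}_{\R^X}$ being a volume form is also fine. But the one genuinely non-formal step --- proving non-degeneracy on a top cell, which is the entire content of the lemma --- is not carried out: you state the kernel claim, call it ``the main obstacle'', and defer it to \cite{kontsevich:intersection}. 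A proof proposal that outsources exactly the step where the hypothesis ``all valences odd'' is used has a genuine gap. Moreover, the heuristic you offer in its place is off target: the elementary blocks $\sum_{a<b}\epsilon_a\wedge\epsilon_b$ out of which $\ol{\Omega}$ is built are indexed by the \emph{holes}, with $k$ equal to the number of sides of the hole (which may be even or odd irrespective of $m_*$), not by the vertices; so ``each odd-valent vertex creates a degeneracy direction of a building block'' does not describe the actual mechanism, and no amount of ``sign bookkeeping'' along those lines is set up here.

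For comparison, the paper proves non-degeneracy by exhibiting an explicit right-inverse on each maximal cell: the bivector
$B(de)=\sum_{i=1}^{2s}(-1)^i\,\pa/\pa[\s_0^i(\ora{e})]_1+\sum_{j=1}^{2t}(-1)^j\,\pa/\pa[\s_0^j(\ola{e})]_1$,
where $2s+1$ and $2t+1$ are the valences of the two endpoints of $e$; the odd valences are used precisely so that these alternating sums have even length, and the identities
$p_{F}^2\,\ol{\eta}_{F}\bigl(\pa/\pa f_i-\pa/\pa f_{i+1}\bigr)=df_i+df_{i+1}$ and
$p_{E_\pm}^2\,\ol{\eta}_{E_\pm}\bigl(\pa/\pa h_{2s}-\pa/\pa f_1\bigr)=dh_{2s}+df_1+2de$
give $\ol{\Omega}B(de)=4de$ for every edge $e$, i.e. non-degeneracy on the slice where the perimeters are fixed (this $B$ is the piecewise-linear incarnation of the Weil--Petersson bivector field). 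To complete your write-up you would either have to reproduce a computation of this kind, or actually perform the kernel/parity analysis you allude to; as it stands, the lemma is assumed rather than proved.
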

\begin{proof}
Let $|\ua|(\up)$ be a cell of $\Mbarcomb_{g,X}(\up)$, whose associated
ribbon graph $\GG_{\ua}$ has only vertices of odd valence.

On $|\ua|(\up)$,
the differentials $de_i$ span the cotangent space.
As the $p_i$'s are fixed, we have the
relation $dp_i=0$ for all $i=1,\dots,n$. Hence 
\[
T^{\vee}\Mbarcomb_{g,X}(\up)\Big|_{|\ua|(\up)}\cong |\ua|(\up)\times
\bigoplus_{e\in E_1(\ua)}\R\cdot de \Big/
\left(\sum_{[\ora{e}]_0=x_i} de\,\Big|\,i=1,\dots,n\right)
\]

On the other hand the tangent bundle is
\[
T\Mbarcomb_{g,X}(\up)\Big|_{|\ua|(\up)}\cong |\ua|(\up)
\times
\left\{
\sum_{e\in E_1(\ua)} b_e\frac{\pa}{\pa e}\ \Big|
\ \sum_{[\ora{e}]_0\in x_i}b_e=0
\quad\text{for all $i=1,\dots,n$}\,
\right\}.
\]
In order to prove that $\ol{\Omega}|_{\ua}:T|\ua|(\up)\lra
T^{\vee}|\ua|(\up)$
is non-degenerate,
we construct its right-inverse. Define $B:T^{\vee}|\ua|(\up)\lra
T|\ua|(\up)$ as
\[
B(de)=\sum_{i=1}^{2s} (-1)^i \frac{\pa}{\pa [\s_0^i(\ora{e})]_1}
+\sum_{j=1}^{2t} (-1)^j \frac{\pa}{\pa [\s_0^j(\ola{e})]_1}
\]
where $\vec{e}$ is any orientation of $e$, while $2s+1$ and $2t+1$
are the cardinalities of $[\ora{e}]_0$ and $[\ola{e}]_0$
respectively.
We want to prove that $\ol{\Omega}B(de)=4de$ for every $e\in E_1(\ua)$.

\begin{center}
{\large
\begin{figurehere}
\psfrag{f1}{$f_1$}
\psfrag{f2}{$f_2$}
\psfrag{f3}{$f_3$}
\psfrag{f4}{$f_4$}
\psfrag{h1}{$h_1$}
\psfrag{h2}{$h_2$}
\psfrag{H1}{$H_1$}
\psfrag{F1}{$F_1$}
\psfrag{F2}{$F_2$}
\psfrag{F3}{$F_3$}
\psfrag{E+}{$E_+$}
\psfrag{E-}{$E_-$}
\psfrag{e}{$\ora{e}$}
\includegraphics[width=0.6\textwidth]{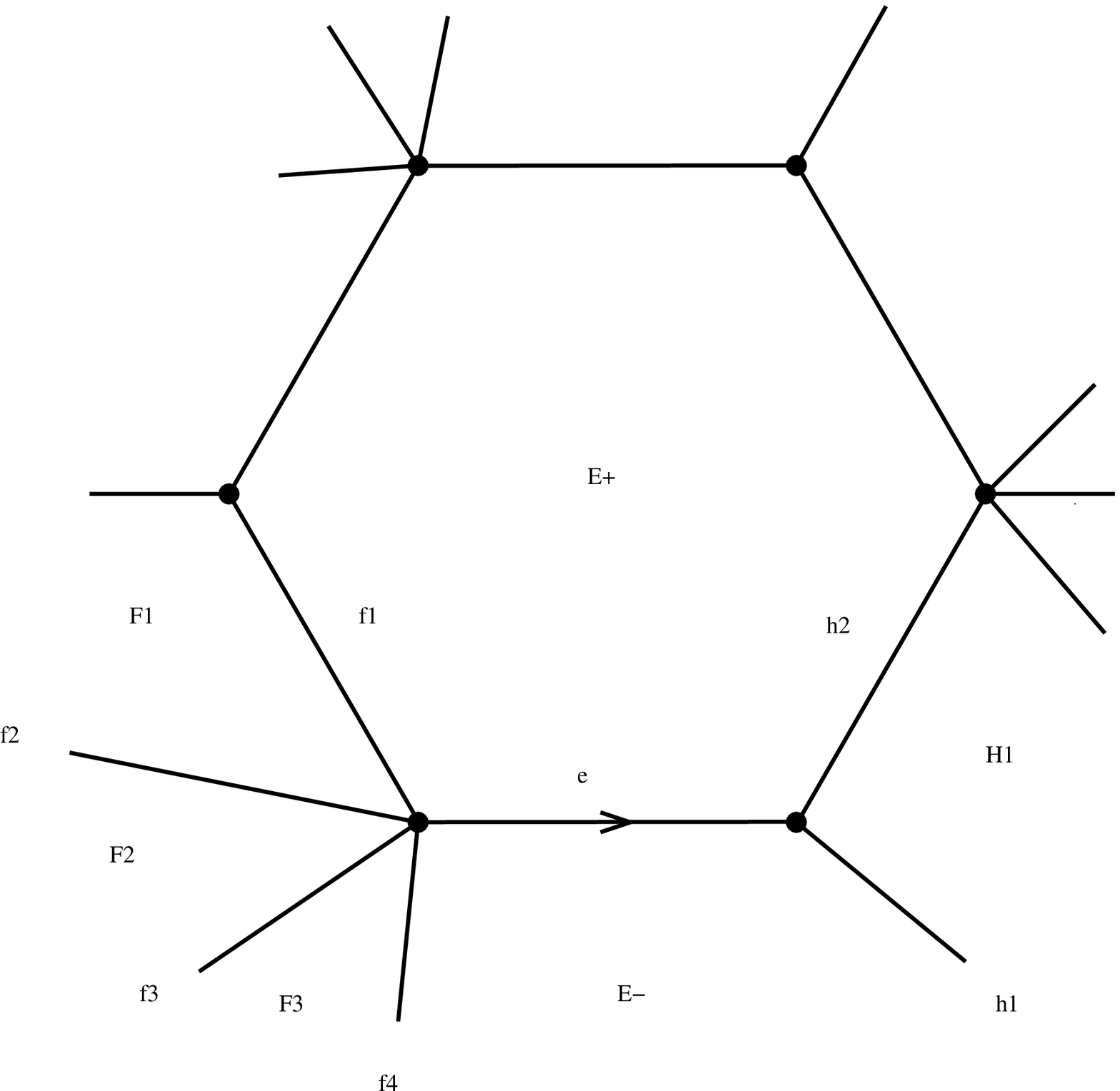}
\caption{An example with $s=2$ and $t=1$.}
\label{fig:orient}
\end{figurehere}
}
\end{center}

To shorten the notation, set
$f_i:=[\s_0^i(\ora{e})]_1$ and $h_j:=[\s_0^j(\ola{e})]_1$
and call $F_i:=[\s_0^i(\ora{e})]_{\infty}$ for $i=1,\dots,2s-1$ and
$H_j:=[\s_0^j(\ola{e})]_{\infty}$ for $j=1,\dots,2t-1$
the holes bordered
respectively by $\{f_i,f_{i+1}\}$ and $\{h_j,h_{j+1}\}$.
Finally call $E_+$ and $E_-$ the holes adjacent to $e$
as in Figure~\ref{fig:orient}.
Remark that neither the edges $f$ and $h$ nor the holes $F$ and $H$
are necessarily distinct. This however has no importance in the
following computation.
\[
B(de)=-\sum_{i=1}^{2s}(-1)^i\frac{\pa}{\pa f_i}-
\sum_{j=1}^{2t}(-1)^j\frac{\pa}{\pa h_j}
\]
It is easy to see (using that the perimeters
are constant) that
\[
p_{F_i}^2\ol{\eta}_{{F_i}}
\left(
\frac{\pa}{\pa f_i}-\frac{\pa}{\pa f_{i+1}}
\right)
= df_i+df_{i+1}
\]
and analogously for the $h$'s.
Moreover
\[
p_{E_+}^2
\ol{\eta}_{{E_+}}
\left(
\frac{\pa}{\pa h_{2s}}-\frac{\pa}{\pa f_1}
\right)
=dh_{2s}+df_1+2de
\]
and similarly for $E_-$.
Finally, we obtain $\ol{\Omega}B(de)=4de$.
\end{proof}

\begin{remark}
Notice that $B$ is the piecewise-linear extension
of the Weil-Petersson bivector field $2\tilde{\omega}^{\vee}_\infty$.
Thus, $\ol{\Omega}$ is the piecewise-linear extension
of $2\tilde{\omega}_\infty$.
\end{remark}

Finally, we can show that the (cellular) chain obtained
by adding maximal simplices of Witten subcomplexes
(with the orientation determined by $\Omega$) is
in fact a cycle.

\begin{lemma}[\cite{kontsevich:intersection}]
With the given orientation $\Mbarcomb_{m_*,X}(\up)$ is a cycle
for all $\up\in\Delta_X\times \R_+$ and
$\Mbarcomb_{m_*,X}(\R_+^X)$ is
a cycle with non-compact support.
\end{lemma}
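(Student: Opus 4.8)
The plan is to show that the top-dimensional cellular chain $C(\up)$ obtained by adding the maximal simplices of $\Mbarcomb_{m_*,X}(\up)$, each oriented by the volume form $\ol{\Omega}^r$ of Lemma~\ref{lemma:orient}, has zero boundary; the statement for $\Mbarcomb_{m_*,X}(\R_+^X)$ will then follow by letting $\up$ vary, the transverse orientation being supplied by $\bar{\ell}_{\pa}^*\mathrm{Vol}_{\R^X}$, and the resulting cycle having closed (non-compact) support because $\R_+^X$ is not compact.

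The first step is to list the codimension-one faces of a maximal simplex $|\ua|(\up)$, whose dual ribbon graph $\GG_{\ua}$ carries exactly $m_i$ vertices of valence $2i+3$. Such a face is obtained by letting the weight of one arc $\a_j$ go to zero, i.e.\ by contracting the edge $e_j$ of $\GG_{\ua}$, and there are two cases. If $e_j$ is not a loop it joins two vertices of odd valences $2a+3$ and $2b+3$, and the contracted graph has a single vertex of \emph{even} valence $2c:=2(a+b+2)$, all other vertices retaining the prescribed odd valences; call this an even-defect face, with distinguished vertex $v$. If $e_j$ is a loop, then $\ua\setminus\{\a_j\}$ is no longer proper — a disc in the complement is replaced by a cylinder — and the face lies on the locus of $\Mbarcomb_{g,X}(\up)$ parametrising singular (enriched) configurations.

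Next I would prove that all these faces cancel in $\pa C(\up)$. For a non-proper face of the second kind, the maximal simplices of $C(\up)$ to which it is incident are exactly those obtained by re-inserting an arc that cuts the offending cylinder into a $(2a+3)$-gon; one checks, using the $\ol{\Omega}^r$-orientation, that their contributions sum to zero. For an even-defect face $\tau$ with distinguished even vertex $v$ of valence $2c$ and cyclically ordered half-edges $(h_1,\dots,h_{2c})$, the maximal simplices of $C(\up)$ having $\tau$ as a face are precisely the ``blow-ups'' of $v$ into two vertices of the prescribed odd valences joined by a new edge — obtained by cutting the cyclic sequence $(h_1,\dots,h_{2c})$ at two slots whose two arcs have even length. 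Using the explicit formula for $\ol{\Omega}$ and its right inverse $B$ from the proof of Lemma~\ref{lemma:orient}, one computes the incidence number of $\tau$ in each such blow-up and checks that, as the pair of cutting slots is rotated around $v$, the combinatorial sign and the sign carried by $\ol{\Omega}^r$ change together, so that the signed sum telescopes to zero. (This is exactly where even-valent vertices are unavoidable, which is the reason odd valences are imposed in the definition.)

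The step I expect to be the main obstacle is this last sign bookkeeping: reconciling the piecewise-linear symplectic orientation $\ol{\Omega}^r$ with the purely combinatorial incidence numbers of the cellular boundary, and verifying the cancellation around every even vertex. This is the technical heart of Kontsevich's argument in \cite{kontsevich:intersection} (the alternative sign conventions of Penner and of Conant--Vogtmann lead to the same conclusion), and I would follow it. Once $\pa C(\up)=0$ is known for every $\up$, the claim that $\ol{\Omega}^r\wedge\bar{\ell}_{\pa}^*\mathrm{Vol}_{\R^X}$ orients $\Mbarcomb_{m_*,X}(\R_+^X)$ as a cycle with non-compact support is immediate, since the cancellations above hold cell-by-cell and uniformly in $\up$.
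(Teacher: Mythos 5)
Your plan coincides step for step with the paper's proof: you sum the maximal cells with the $\ol{\Omega}^r$-orientation, you separate the codimension-one faces into those obtained by merging two odd vertices into one even vertex and those where the contracted edge is a loop so that the face lies in the nodal (non-proper) locus, and you identify the incident maximal cells as the splittings of the even vertex, respectively the arcs re-cutting the cylinder into a $(2s+3)$-gon. But the decisive content of the proof is exactly the part you declare to be the expected obstacle and defer to \cite{kontsevich:intersection}: the verification that the induced orientations cancel. In the paper this is done concretely by observing that the orientation induced on a face $|\ua'|(\up)$ is governed by $\i_{de}(B_{\ua})$, where $B_{\ua}$ is the right-inverse bivector constructed in the proof of Lemma~\ref{lemma:orient}, and by computing it explicitly: in the merging case $\i_{de}(B_{\ua})=\pm \sum_{i=1}^{2(t_1+t_2)+4}(-1)^i\,\pa/\pa f_i$, where the $f_i$ are the edges at the merged vertex, and in the nodal case $\i_{de}(B_{\ua})=\pm 2\sum_{i=1}^{2t_1+2}(-1)^i\,\pa/\pa f_i$, the sum running over the (even-sized) side of the node. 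Since shifting the re-insertion slot by one step flips the sign of these expressions, and the number of incident maximal cells is even --- $2(t_1+t_2)+4$, or $t_1+t_2+2$ when $t_1=t_2$, in the first case, and $(2t_1+2)(2t_2+3)$ with the relevant factor even in the second --- half of the incident cells induce one orientation and half the other. Without this computation (or an equivalent one) the proposal asserts the lemma rather than proving it, so as a blind attempt it is incomplete precisely at its core.

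Two smaller points. First, in the merging case the incident maximal cells are not all splittings ``at two slots whose two arcs have even length'': the two arcs must have lengths exactly $2t_1+2$ and $2t_2+2$, since any other even--even splitting changes the valence profile and produces a cell outside $\Mbarcomb_{m_*,X}(\up)$, which must not enter the boundary count; your main clause (``the prescribed odd valences'') is what is correct, and the parenthetical characterization should be tightened before any sign bookkeeping is attempted. Second, your heuristic that ``the combinatorial sign and the sign carried by $\ol{\Omega}^r$ change together'' is not the mechanism: what happens is that the induced boundary orientation itself alternates as the slot rotates, and cancellation follows from the evenness of the number of incident cells, as above.
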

\begin{proof}
Given a top-dimensional cell $|\ua|(\up)$ in $\Mbarcomb_{m_*,X}(\up)$,
each face in the boundary $\pa|\ua|(\up)$
is obtained shrinking one edge of $\GG_{\ua}$.
This contraction may merge two vertices as in Fig.~\ref{fig:contraction}.

\begin{center}
{\large
\begin{figurehere}
\psfrag{e1}{{\color{Blue}$e_1$}}
\psfrag{e2}{{\color{Blue}$e_2$}}
\psfrag{e3}{{\color{Blue}$e_3$}}
\psfrag{e4}{{\color{Blue}$e_4$}}
\psfrag{e5}{{\color{Blue}$e_5$}}
\psfrag{e6}{{\color{Blue}$e_6$}}
\includegraphics[width=0.6\textwidth]{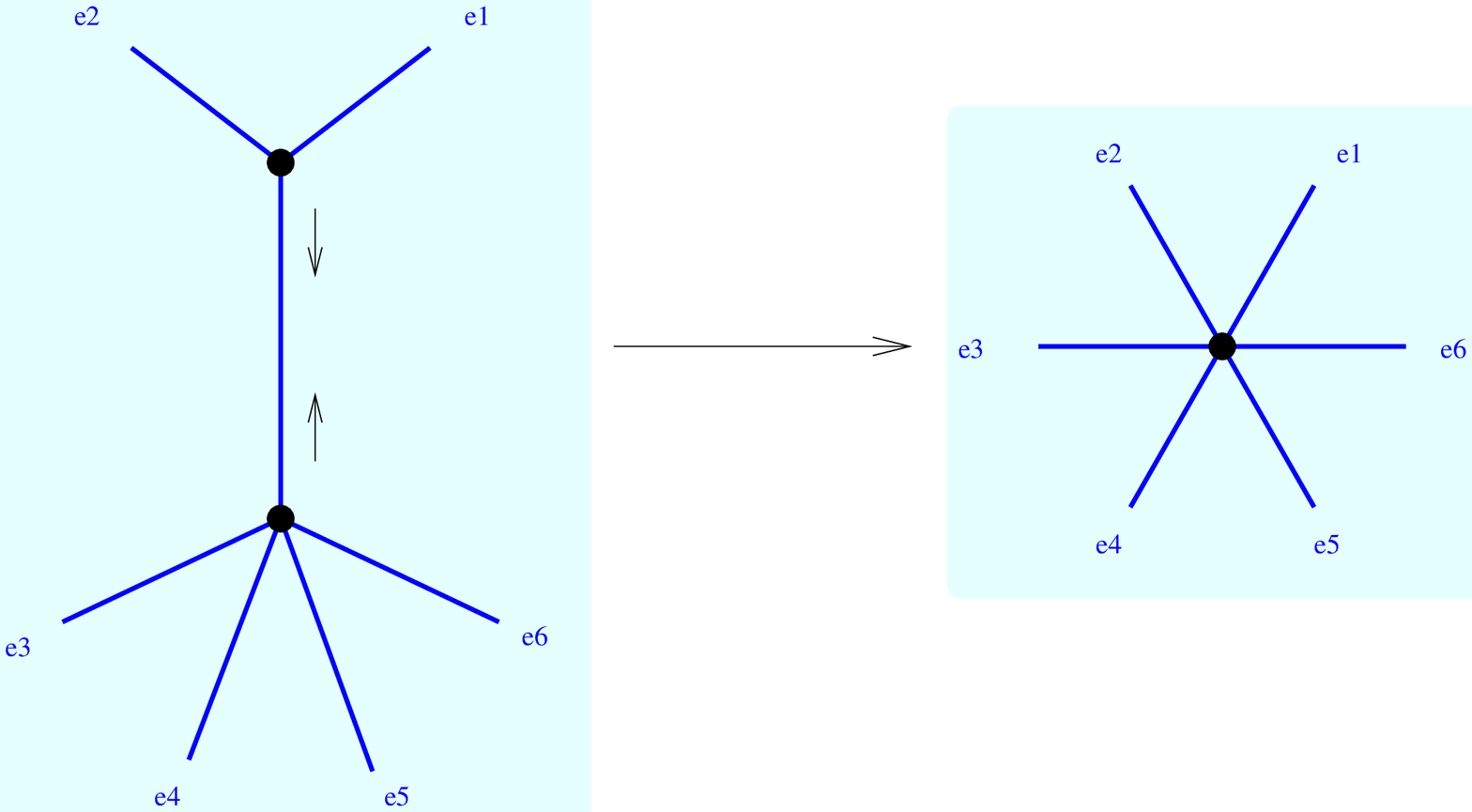}
\caption{A contraction that merges a $3$-valent and a $5$-valent vertex.}
\label{fig:contraction}
\end{figurehere}
}
\end{center}

Otherwise the shrinking produces a node,
as in Fig.~\ref{fig:contraction2}.

\begin{center}
{\large
\begin{figurehere}
\psfrag{e1}{{\color{Blue}$e_1$}}
\psfrag{e2}{{\color{Blue}$e_2$}}
\psfrag{e3}{{\color{Blue}$e_3$}}
\psfrag{e4}{{\color{Blue}$e_4$}}
\psfrag{e5}{{\color{Blue}$e_5$}}
\includegraphics[width=0.6\textwidth]{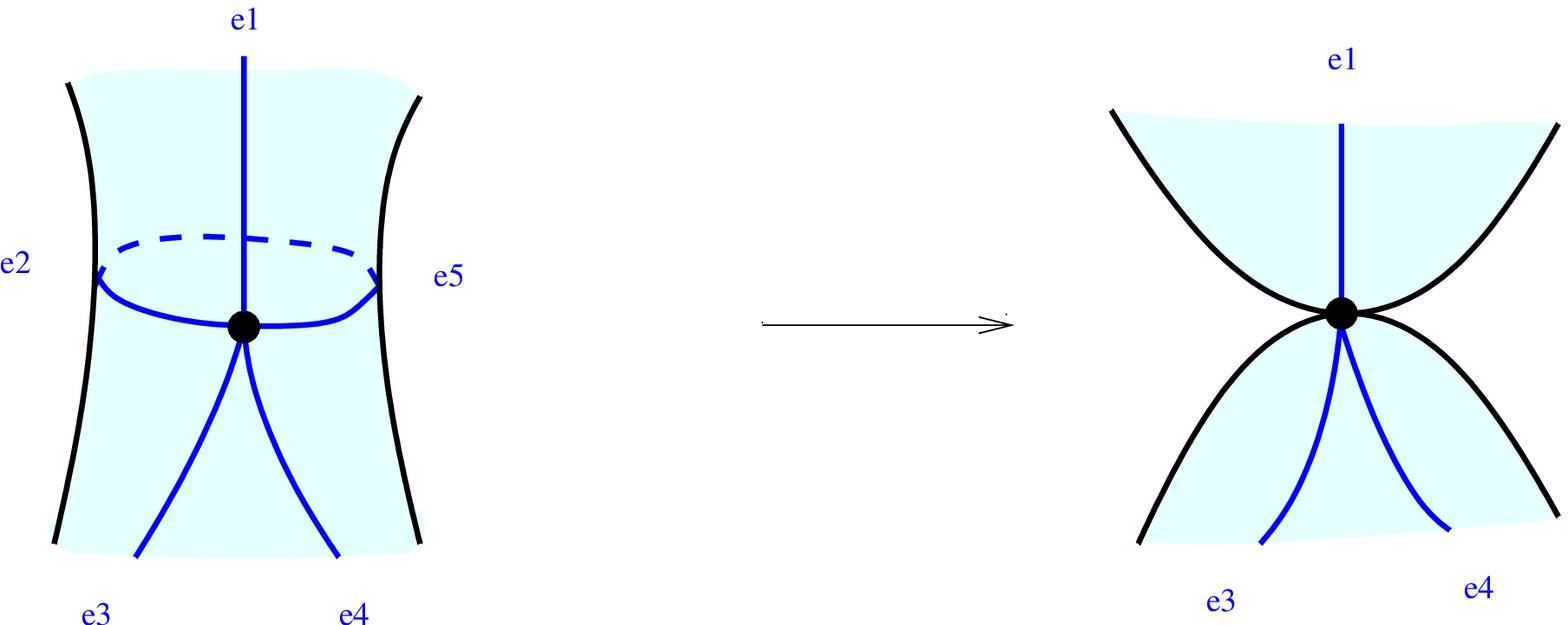}
\caption{A contraction produces a node.}
\label{fig:contraction2}
\end{figurehere}
}
\end{center}

Let $|\ua'|(\up)\in\pa |\ua|(\up)$
be the face of $|\ua|(\up)$ obtained by shrinking
the edge $e$. Then $\Lambda^{6g-7+2n-2r}T|\ua'|(\up)=
\Lambda^{6g-6+2n-2r}T|\ua|(\up)
\otimes N^{\vee}_{|\ua'|/|\ua|}$
and so the dual of the orientation form induced by
$|\ua|(\up)$ on $|\ua'|(\up)$
is $\i_{de}(B_{\ua}^{6g-6+2n-2r})=(6g-6+2n-2r)\i_{de}(B_{\ua})
\wedge B_{\ua}^{6g-8+2n-2r}$, where $B_{\ua}$ is the bivector field
on $|\ua|(\up)$ defined in Lemma~\ref{lemma:orient}.

Consider the graph $\GG_{\ua'}$ that occurs in the boundary
of a top-dimensional cell of $\Mbarcomb_{m_*,X}(\up)$.
Suppose it is obtained merging two
vertices of valences $2t_1+3$ and $2t_2+3$ in a vertex $v$
of valence $2(t_1+t_2)+4$. Then $|\ua'|(\up)$
is in the boundary of
exactly $2(t_1+t_2)+4$ cells of $\Mbarcomb_{m_*,X}(\up)$
or $t_1+t_2+2$ ones in the case $t_1=t_2$. In any case,
the number of cells $|\ua'|(\up)$ is bordered
by are even: we need to prove
that half of them induces on $|\ua'|(\up)$ an orientation and the
other half induces the opposite one. If $\GG_{\ua'}$ is obtained
from some $\GG_{\ua}$ contracting an edge $e$, then we just have
to compute the vector field $\i_{de}(B_{\ua})$, which turns
to be
\[
\i_{de}(B_{\ua})=\pm \sum_{i=1}^{2(t_1+t_2)+4} (-1)^i\frac{\pa}{\pa f_i}
\]
where $f_1,\dots,f_{2(t_1+t_2)+4}$ are the edges of $\GG_{\ua'}$
outgoing from $v$.
It is a straightforward computation to check that one obtains
in half the cases a plus and in half the cases a minus.

When $\GG^{en}_{\ua'}$ has a node with $2t_1+2$ edges on one side
(which we will denote by $f_1,\dots,f_{2t_1+2}$)
and $2t_2+3$ edges on the other side, the computation is
similar. The cell occurs as boundary of exactly
$(2t_1+2)(2t_2+3)$ top-dimensional cells and, if
$\GG_{\ua'}$ is obtained by $\GG_{\ua}$ contracting the edge $e$,
then
\[
\i_{de}(B_{\ua})=\pm 2\sum_{i=1}^{2t_1+2} (-1)^i\frac{\pa}{\pa f_i}.
\]
A quick check ensures that the signs cancel.
\end{proof}

Define the
{\it Witten classes}
$\Wbar_{m_*,X}(\up):=[\Mbarcomb_{m_*,X}(\up)]$ and
let $\W_{m_*,X}(\up)$ be its restriction to $\Mcomb_{g,X}(\up)$,
which defines (by Poincar\'e duality) a cohomology class
in $H^{2r}(\M_{g,X})$, independent of $\up$.

%
\subsubsection{Generalized Witten cycles.}

It is possible to define a slight generalization of
the previous classes, prescribing that some markings
hit vertices with assigned valence.

These {\it generalized Witten classes} are related to
the previous $\W_{m_*,X}$ in an intuitively obvious way,
because forgetting the markings of some vertices will
map them onto one another.
We will omit the details and refer to \cite{mondello:combinatorial}.

\subsection{Witten cycles and tautological
classes}\label{ss:witten-tautological}

In this subsection, we will sketch the proof of the following
result, due to K.~Igusa \cite{igusa:mmm} and \cite{igusa:kontsevich}
(see also \cite{igusa-kleber})
and Mondello \cite{mondello:combinatorial} independently.

\begin{theorem}\label{thm:tautological}
Witten cycles $\W_{m_*,X}$ on $\M_{g,X}$
are Poincar\'e dual to polynomials in the $\kappa$
classes and vice versa.
\end{theorem}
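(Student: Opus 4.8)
The plan is to establish both inclusions by passing through a common "combinatorial model" for the $\kappa$ classes. First I would observe that the universal curve over $\Mcomb_{g,X}$ admits an explicit combinatorial description: a point of the universal curve over a metrized ribbon graph $\GG$ is a point of the geometric realization $|\GG|$ (away from the marked points), and the tangent line along each hole gives, exactly as in Lemma~\ref{lemma:eta}, a circle bundle $L_i$ whose first Chern class is represented by the piecewise-linear form $\ol{\eta}_i$. The key is to extend this to a combinatorial model for an $(n+1)$-pointed family, i.e. to add one extra marked point $q$ that is free to roam over the edges of $\GG$. Using Faber's formula (or just the defining relation $\kappa_j = (\pi_q)_*(\psi_q^{j+1})$), the classes $\kappa_j$ are push-forwards along this forgetful map, so I would first produce a piecewise-linear representative of $\psi_q = c_1(\Ll_q)$ on the combinatorial universal curve. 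This is where the marked-point generalization of Witten cycles enters: a vertex of valence $2m+3$ hit by the extra marking $q$ is precisely the combinatorial incarnation of "$\psi_q$ concentrated at a polygon with $2m+3$ sides", so that the push-forward of powers of $\psi_q$ along $\pi_q$ decomposes as a sum of generalized Witten cycles $\W_{m_*,X}$ (with controlled coefficients counting how many vertices get the extra marking). Carrying this out carefully yields each monomial $\prod_j \kappa_j^{a_j}$ as an explicit rational combination of Witten cycles $\W_{m_*,X}$; that gives one direction — every polynomial in $\kappa$'s is Poincar\'e dual to a combination of Witten cycles.

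For the reverse inclusion I would invert this relation. The Witten cycles $\W_{m_*,X}$, as $m_*$ ranges over all admissible sequences, are indexed by the same data ($r = \sum i\,m_i$ fixed by the codimension and $\sum(2i+1)m_i = 4g-4+2n$) that indexes a spanning set of degree-$r$ monomials in $\kappa_1,\kappa_2,\dots$; the transition matrix from the expression of the $\kappa$-monomials in terms of Witten cycles is triangular (with respect to the natural partial order on partitions / on the valence profiles $m_*$) with nonzero diagonal, because the "leading term" of $\prod \kappa_j^{a_j}$ in the combinatorial expansion is the Witten cycle whose vertex-valence profile corresponds directly to the partition $(a_j)$, the lower-order corrections coming from collisions of markings and from the stabilization/contraction phenomena in Faber's formula. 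Inverting this triangular system expresses each $\W_{m_*,X}$ back as a polynomial in the $\kappa$ classes, completing the equivalence.

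**Main obstacle.** The serious work is the bookkeeping in the push-forward $(\pi_q)_*(\psi_q^{k})$ at the combinatorial level. One must (i) give a genuinely cellular model of the universal curve $\Mcomb_{g,X\cup\{q\}} \to \Mcomb_{g,X}$ compatible with $\Psi_{JS}$, which is delicate because adding a marked point on an edge subdivides that edge and hence refines the cell structure in a non-uniform way across $\Mcomb_{g,X}$; (ii) check that the PL form $\ol{\eta}_q$ restricts correctly on each cell and that its fiber integral along the $S^1$-fibers of $L_q$, then along the edges of $\GG$, lands in the span of the Witten chains — this is exactly where one sees why odd valences are forced (even-valence loci fail to be cycles, as noted after the definition of $\Af_{m_*}$), so the argument must show that the combinatorial $\psi_q^{k}$ push-forward is supported on odd-valence configurations; and (iii) control the combinatorial coefficients and the triangularity of the resulting linear system. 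I expect (ii)–(iii) — identifying the precise rational coefficients and proving the transition matrix is invertible — to be the crux; (i) is laborious but essentially formal once one commits to a refinement of the cell decomposition. A clean way to sidestep some of (i) is to work on $\Mbarcomb_{g,X}$ with the orbispace structure and use that $\ol{\Psi}_{JS}$ is a homeomorphism of orbispaces, transferring the algebraic identities $\kappa_j = (\pi_q)_*\psi_q^{j+1}$ and $\pi_q^*\kappa_j = \kappa_j - \psi_q^j$ to the combinatorial side and then localizing the cohomology classes on the Witten subcomplexes by a degeneration/deformation-retraction argument.
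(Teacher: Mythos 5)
Your proposal follows essentially the same route as the paper's sketch: introduce extra marked points, interpret the generalized Witten cycles as the loci where these points hit odd-valent vertices, use the piecewise-linear representatives of the corresponding $\psi$ classes and their fiber integrals along the forgetful maps to write $\kappa$-monomials as rational combinations of (generalized) Witten cycles, with correction terms coming from collisions of the extra markings, and then invert the resulting triangular system. The technical point you flag as the crux but leave open --- making sense of $\ol{\eta}_q$ on the locus $\{p_q=0\}$ where the generalized cycle actually lives --- is precisely what the paper handles by blowing up $\{p_q=0\}$ inside $\Mbarcomb_{g,X\cup\{q\}}$ and extending $\ol{\eta}_q$ to the exceptional locus via angle coordinates (``infinitesimal edges'') at the $q$-marked vertex, whose fiber integrals yield the explicit coefficients such as $c(r)=\frac{(r+1)!}{(2r+2)!}$ and $b(r,s)$.
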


In \cite{mondello:combinatorial}, the following results
are also proven:
\begin{itemize}
\item
Witten generalized cycles on $\M_{g,X}$
are Poincar\'e dual to polynomials in the $\psi$
and the $\kappa$ classes
\item
ordinary and generalized Witten cycles
on $\Mbarcomb_{g,X}(\up)$ are push-forward
of (the Poincar\'e dual of)
tautological classes from $\Mbar_{g,X}$;
an explicit recipe to produce such tautological
classes is given.
\end{itemize}

\subsubsection{The case with one special vertex.}

We want to consider a combinatorial cycle
on $\M_{g,X}$
supported
on ribbon graphs, whose vertices are generically all trivalent
except one, which is $(2r+3)$-valent (and $r\geq 1$).
To shorten the notation,
call this Witten cycle $\W_{2r+3}$.

We also define a generalized Witten cycle on the universal
curve $\Cc_{g,X}\subset \Mbar_{g,X\cup\{y\}}$ supported on the locus
of ribbon graphs, which have a $(2r+3)$-valent vertex
marked by $y$ and all the other vertices are trivalent
and unmarked. Call $\W^y_{2r+3}$ this cycle.

We would like to show that $\mathrm{PD}(\W^y_{2r+3})=c(r)\psi_y^{r+1}$,
where $c(r)$ is some constant. As a consequence,
pushing the two hand-sides down through the proper map
$\pi_y:\Cc_{g,X}\rar \M_{g,X}$, we would obtain
$\mathrm{PD}(\W_{2r+3})=c(r)\k_r$.

Lemma~\ref{lemma:eta} gives us the nice piecewise-linear
$2$-form $\eta_y$, that is pulled back to $\psi_y$
through $\xi$. The only problem is that $\eta_y$ is
defined only for $p_y>0$, whereas $\W^y_{2r+3}$ is
exactly contained in the locus $\{p_y=0\}$.

To compare the two, one can look at the
blow-up $\mathrm{Bl}_{p_y=0}\Mbarcomb_{g,X\cup\{y\}}$
of $\Mbarcomb_{g,X\cup\{y\}}$ along
the locus $\{p_y=0\}$. Points in the exceptional
locus $E$ can be
identified with metrized (nonsingular)
ribbon graphs $\GG$, in which $y$ marks a vertex,
plus {\it angles} $\vartheta$ between consecutive
oriented edges outgoing from $y$.
One must think of these angles as of
{\it infinitesimal edges}.

It is clear now that $\eta_y$ extends to
$E$ by
\[
\eta_y|_{|\ua|(\up)}:=\sum_{1\leq s<t\leq k-1}
d\tilde{e}_s \wedge d\tilde{e}_t
\]
where $\dis \tilde{e}_j=\frac{\vartheta_j}{2\pi}$,
$y$ marks a vertex with cyclically ordered
outgoing edges
$(\ora{e}_1,\dots,\ora{e}_k)$
and $\vartheta_j$ is the angle between
$\ora{e}_j$ and $\ora{e}_{j+1}$ (with $j\in\Z/k\Z$).

Thus, pushing forward $\eta_y^{r+1}$
through $E \rar \Mbarcomb_{g,X}(p_y=0)$,
we obtain $c(r)\ol{W}^y_{2r+3}$ plus other
terms contained in the boundary, and the coefficient
$c(r)$ is exactly the integral of $\eta_y^{r+1}$
on a fiber (that is, a simplex), which turns
out to be $\dis c(r)=\frac{(r+1)!}{(2r+2)!}$.
Thus, $\W^y_{2r+3}$ is Poincar\'e dual to
$2^{r+1}(2r+1)!!\psi_y^{r+1}$.

\subsubsection{The case with many special vertices.}

To mimic what done for one non-trivalent vertex,
let's consider combinatorial classes
with two non-trivalent vertices.
Thus, we look at the class $\psi_y^{r+1}\psi_z^{s+1}$
(with $r,s\geq 1$)
on $\Cc^2_{g,X}:=\Cc_{g,X}\times_{_{\M_{g,X}}} \Cc_{g,X}$.

Look at the blow-up $\mathrm{Bl}_{p_y=0,p_z=0}\Mbarcomb_{g,X\cup\{y\}}$
of $\Mbarcomb_{g,X\cup\{y,z\}}$ along
the locus $\{p_y=0\}\cup\{p_z=0\}$
and let $E=E_y\cap E_z$, where $E_y$ and $E_z$
are the exceptional loci.

As before, we can identify $E\cap\{y\neq z\}$ with
the set of metrized ribbon graphs $\GG$,
with angles at the vertices $y$ and $z$. Thus, pushing
$\eta_y^{r+1}\eta_z^{s+1}$ forward through
the blow-up map (which forgets the angles
at $y$ and $z$), we obtain
a multiple of the generalized combinatorial
cycles given by $y$ marking a $(2r+3)$-valent
vertex and $z$ marking a $(2s+3)$-valent
(distinct) vertex. The coefficient $c(r,s)$ will just
be $\dis \frac{(r+1)!(s+1)!}{(2r+2)!(2s+2)!}$.

Points in $E\cap\{y=z\}$
can be thought of as metrized ribbon graphs
$\GG$ with two infinitesimal holes (respectively
marked by $y$ and $z$) adjacent to each other.
If we perform the push-forward of
$\eta_y^{r+1}\eta_z^{s+1}$ forgetting first the
angles at $z$ and then the angles at $y$, then we obtain
some contribution only from the loci
in which the infinitesimal $z$-hole 
has $(2s+3)$ edges and the infinitesimal
$y$-hole has $(2r+4)$ edges (included
the common one). Thus, we obtain the
same contribution for each of
the $b(r,s)$ configurations
of two adjacent holes of valences $(2s+3)$
and $(2r+4)$.

Thus, we obtain a cycle
supported on the locus of metrized
ribbon graphs $\GG$ in which $y=z$ marks
a $(2r+2s+3)$-valent vertex, with
coefficient $b(r,s)c(r,s)$.

Hence, $\psi_y^{r+1}\psi_z^{s+1}$ is Poincar\'e
dual to a linear combination of generalized
combinatorial cycles. As before, using the
forgetful map, the same
holds for the Witten cycles obtained by
deleting the $y$ and the $z$ markings.

One can easily see that the transformation
laws from $\psi$ classes to combinatorial classes
are invertible (because they are ``upper
triangular'' in a suitable sense).

Clearly, in order to deal with many $\psi$
classes (that is, with many non-trivalent marked
vertices), one must compute more and more complicated
combinatorial factors like $b(r,s)$.

We refer to \cite{igusa-kleber} and \cite{mondello:combinatorial}
for two (complementary) methods to calculate these factors.

\subsection{Stability of Witten cycles}\label{ss:stability}
\subsubsection{Harer's stability theorem.}

The (co)homologies of the mapping class groups have the remarkable
property that they stabilize when the genus of the surface increases.
This was first proven by Harer \cite{harer:stability}, and the
stability bound was then improved by Ivanov \cite{Ivanov:improved}
(and successively again by Harer for homology with rational
coefficients, in an unpublished paper). We now want to
recall some of Harer's results.

Let $S_{g,n,b}$ be a compact oriented
surface of genus $g$ with $n$ marked points
and $b$ boundary components $C_1,\dots,C_b$.
Call $\G(S_{g,n,b})$ the group of isotopy
classes of diffeomorphisms of $S$ that fix the marked
points and
$\pa S$ pointwise.

Call also $P=S_{0,0,3}$ a fixed pair of pants and denote by
$B_1,B_2,B_3$ its boundary components.

Consider the following two operations:
\begin{itemize}
\item[(y)] 
gluing $S_{g,n,b}$ and $P$ by identifying $C_b$ with $B_1$,
thus
producing an oriented surface of genus $g$ with $n$ marked
points and $b+1$ boundary components
\item[(v)]
identify $C_{b-1}$ with $C_b$ of $S_{g,n,b}$,
thus producing an oriented
surface of genus $g+1$ with $n$ marked points
and $b-2$ boundary components.
\end{itemize}

Clearly, they induce homomorphism at the level of mapping class
groups
\[
\mathcal{Y}: \G(S_{g,n,b})\lra \G(S_{g,n,b+1})
\]
when $b\geq 1$
(by extending the diffeomorphism as the identity on $P$)
and
\[
\mathcal{V}:\G(S_{g,n,b})\lra \G(S_{g+1,n,b-2})
\]
when $b\geq 2$.

\begin{theorem}[Harer \cite{harer:stability}]
The induced maps in homology
\begin{align*}
\mathcal{Y}_{*} & : H_k(\G(S_{g,n,b})) \lra H_k(\G(S_{g,n,b+1}))\\
\mathcal{V}_{*} & : H_k(\G(S_{g,n,b})) \lra H_k(\G(S_{g+1,n,b-2}))
\end{align*}
are isomorphisms for $g\geq 3k$.
\end{theorem}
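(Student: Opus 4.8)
The plan is to run the standard homological-stability machine: let $\G(S_{g,n,b})$ act on a highly connected simplicial complex of arc systems, feed the action into an equivariant spectral sequence, and argue by induction on a complexity invariant of the surface, in the course of which the maps $\mathcal{Y}_*$ and $\mathcal{V}_*$ reappear in lower complexity as the differentials. Since the two operations are phrased via boundary components, I would work throughout with the surfaces $S_{g,n,b}$ as given, singling out the boundary component $C=C_b$ touched by $\mathcal{Y}$ and $\mathcal{V}$. Consider the complex $\mathcal{A}(S_{g,n,b})$ whose vertices are isotopy classes of properly embedded arcs with both endpoints on $C$ (neither boundary-parallel nor null-homotopic) and whose $p$-simplices are systems of $p+1$ disjoint, pairwise non-isotopic such arcs; this is a relative of the arc complexes of Section~\ref{ss:arcs}, and $\G(S_{g,n,b})$ acts on it simplicially with finitely many orbits of simplices.

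The key structural point is that cutting $S_{g,n,b}$ along a simplex $\ua$ produces another compact surface with boundary of strictly smaller complexity whose mapping class group (fixing the boundary pointwise) is \emph{exactly} the stabilizer $\mathrm{Stab}_{\G}(\ua)$ --- there are no finite-index corrections precisely because the arcs end on $\pa S$ --- and the $p+1$ face maps of $\ua$ are, at the level of cut surfaces, stabilization maps of the two types $\mathcal{Y}$ and $\mathcal{V}$ in lower complexity, according to whether the contracted arc is separating. The connectivity input, which I would quote rather than reprove, is Harer's theorem \cite{harer:stability} (compare \cite{harer:virtual}) that $\mathcal{A}(S_{g,n,b})$ is $N$-connected with $N$ a linear function of $g$; the slope of this bound is exactly what produces the threshold $g\ge 3k$. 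Granting it, the equivariant homology spectral sequence of the skeletal filtration reads
\[
E^1_{p,q}=\bigoplus_{[\ua]} H_q\!\left(\mathrm{Stab}_{\G}(\ua);\widetilde{\Z}_{\ua}\right)\ \Longrightarrow\ H_{p+q}\!\left(\G(S_{g,n,b})\right)
\]
(the sum over $\G$-orbits of $p$-simplices, $\widetilde{\Z}_{\ua}$ the orientation module), converging in total degree up to about $N$. To dispose of the twisted coefficients one replaces $\mathcal{A}$ by the homotopy-equivalent semi-simplicial set of \emph{ordered} arc systems, on which stabilizers act trivially on orientations; identifying the $E^1$-columns with the homologies of smaller mapping class groups via the inductive hypothesis, the $d^1$-differentials become alternating sums of $\mathcal{Y}_*$ and $\mathcal{V}_*$.

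The theorem then follows by the usual two-step argument, in the tradition of Quillen and van der Kallen, organized as an induction on a complexity invariant such as $2g+b+n$. For surjectivity of $\mathcal{Y}_*$ and $\mathcal{V}_*$ in degree $k$ when $g\ge 3k$, one runs a relative version of the above spectral sequence whose $E^1$-entries vanish in the pertinent range by induction, using that $N$ is large enough; for injectivity one repeats the argument one homological degree higher, equivalently on the mapping cone of the stabilization map, again with connectivity to spare. The low-complexity and $g<3k$ cases are either vacuous or handled by hand and start the induction. The principal obstacle in this program is the connectivity estimate for $\mathcal{A}(S_{g,n,b})$: Harer proves it by making a simplicial sphere into a simplicial map, then successively eliminating ``bad'' simplices by surgering arcs across the complementary subsurfaces, and the delicate quantitative bookkeeping of this surgery is precisely what forces the linear, slope-$3$, bound. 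A secondary and more routine nuisance is the careful matching of the $p+1$ face operators with $\mathcal{Y}_*$ and $\mathcal{V}_*$, including the separating-versus-non-separating dichotomy and the orientation-sign computation that makes the $d^1$ alternating sums come out right.
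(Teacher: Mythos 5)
The paper you are comparing against does not prove this statement at all: it is quoted verbatim from Harer \cite{harer:stability} (with the remark that the exact bound is immaterial for the paper's purposes), so the only meaningful comparison is with the literature. Measured against that, your proposal is a reasonable roadmap of the standard proof — action of $\G(S_{g,n,b})$ on a highly connected arc complex, the equivariant spectral sequence, identification of simplex stabilizers with mapping class groups of cut surfaces, and a double induction giving surjectivity and then injectivity in a range. That is indeed the architecture of Harer's argument and of all later treatments (Ivanov, Wahl, Randal-Williams--Wahl).

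However, as written it is an outline rather than a proof: every load-bearing step is either cited or asserted. The high connectivity of the arc complex with a linear bound is the entire technical content of the theorem, and you propose to quote it from the very paper whose main theorem you are proving; the statement that the stabilizer of a simplex is \emph{exactly} the mapping class group of the cut surface is not true on the nose (stabilizers can permute the arcs of $\ua$ and reverse their orientations, which is precisely why one must pass to the semi-simplicial set of ordered/oriented systems and then re-identify stabilizers — you mention the device but do not carry out the identification); and the claim that the $d^1$ differentials "become alternating sums of $\mathcal{Y}_*$ and $\mathcal{V}_*$" needs the face maps to be recognized, orbit by orbit, as conjugates of the two stabilization maps, which is where the separating/non-separating case analysis and the sign bookkeeping actually live. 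Finally, a single complex of arcs with both endpoints on $C_b$ does not comfortably serve both maps: cutting such arcs changes $(g,b)$ in ways adapted to one stabilization but not the other, and Harer (and Wahl) in fact use several interlocking complexes — e.g.\ arcs joining two distinct boundary components, or a boundary to a puncture — with the induction passing back and forth between them. So the proposal identifies the right strategy but does not yet contain a proof; the gap is concentrated in the connectivity estimate and in the precise matching of stabilizers and face maps to $\mathcal{Y}_*$ and $\mathcal{V}_*$.
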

The exact bound is not important for our purposes.
We only want to stress that the theorem implies
that $H_k(\G(S_{g,n,b}))$ stabilizes for large $g$.
In particular, fixed $n\geq 0$,
the rational homology of $\M_{g,n}$
stabilizes for large $g$.

\begin{remark}
We have $B\G(S_{g,n,b})\simeq\M_{g,X,T}$,
where $\M_{g,X,T}$ is the moduli space of Riemann surfaces
of genus $g$ with $X\cup T$ marked points
($X=\{x_1,\dots,x_n\}$ and $T=\{t_1,\dots,t_b\}$)
and a nonzero tangent vector at each point of $T$.
If $b\geq 1$, then $\M_{g,X,T}$ is a smooth variety: in fact,
an automorphism of a Riemann surface that fixes a point
and a tangent direction at that point is the identity
(it follows from uniformization and Schwarz lemma).
\end{remark}

\subsubsection{Mumford's conjecture.}

Call $\dis \G_{\infty,n}=\lim_{g\rar\infty} \G(S_{g,n,1})$, where
the map $\G(S_{g,n,1})\rar \G(S_{g+1,n,1})$ corresponds
to gluing a torus with two holes at the boundary
component of $S_{g,n,1}$.

Then, $H^k(\G_{\infty,n})$ coincides with $H^k(\G_{g,n})$
for $g\gg k$.

Mumford conjectured that
$H^*(\G_\infty;\Q)$
is the polynomial algebra on the $\k$ classes.
Miller \cite{miller:homology}
showed that $H^*(\G_\infty;\Q)$ is a Hopf
algebra that contains $\Q[\k_1,\k_2,\dots]$.

Recently, after works of Tillmann
(for instance, \cite{tillmann:infinite}) and
Madsen-Tillmann \cite{madsen-tillmann},
Madsen and Weiss \cite{madsen-weiss:mumford}
proved a much stronger statement of homotopy
theory, which in particular implies Mumford's
conjecture.

Thanks to a result of B\"odigheimer-Tillmann
\cite{boedigheimer-tillmann:stripping}, it follows
that $H^*(\G_{\infty,n};\Q)$ is a polynomial
algebra on $\psi_1,\dots,\psi_n$ and the
$\k$ classes.

Thus, generalized Witten classes, being polynomials in
$\psi$ and $\k$, are also stable.
In what follows, we would like to prove this
stability in a direct way.

\subsubsection{Ribbon graphs with tails.}

One way to cellularize the moduli space of curves with
marked points and tangent vectors at the marked points
is to use ribbon graphs with tails (see, for instance,
\cite{godin:unstable}).

Consider $\Si$ a compact Riemann surface of genus $g$
with marked points $X\cup T=\{x_1,\dots,x_n\}\cup\{t_1,\dots,t_b\}$
and nonzero tangent vectors $v_1,\dots,v_b$ at $t_1,\dots,t_b$.

Given $p_1,\dots,p_n\geq 0$ and $q_1,\dots,q_b>0$,
we can construct the ribbon graph $\GG$ associated to
$(\Si,\up,\ul{q})$, say using the
Jenkins-Strebel differential $\varphi$.

For every $j=1,\dots,b$, move from the center $t_j$
along a vertical trajectory $\g_j$ of $\varphi$
determined by the
tangent vector $v_j$, until we hit the critical graph.
Parametrize the opposite path $\g_j^*$ by arc-length,
so that $\g_j^*:[0,\infty]\rar \Si$, $\g_j^*(0)$
lies on the critical graph and $\g_j^*(\infty)=t_j$.
Then, construct a new ribbon
graph out of $\GG$ by ``adding''
a new vertex (which we will call $\tilde{v}_j$)
and a new edge $e_{v_j}$ of length $|v_j|$
(a {\it tail}), whose realization is
$\g_j^*([0,|v_j|])$ (see Figure~\ref{fig:tail}).

\begin{center}
{\large
\begin{figurehere}
\psfrag{x}{$t_j$}
\psfrag{v}{{\color{Plum}$\tilde{v}_j$}}
\psfrag{ev}{{\color{Plum}$e_{v_j}$}}
\includegraphics[width=0.6\textwidth]{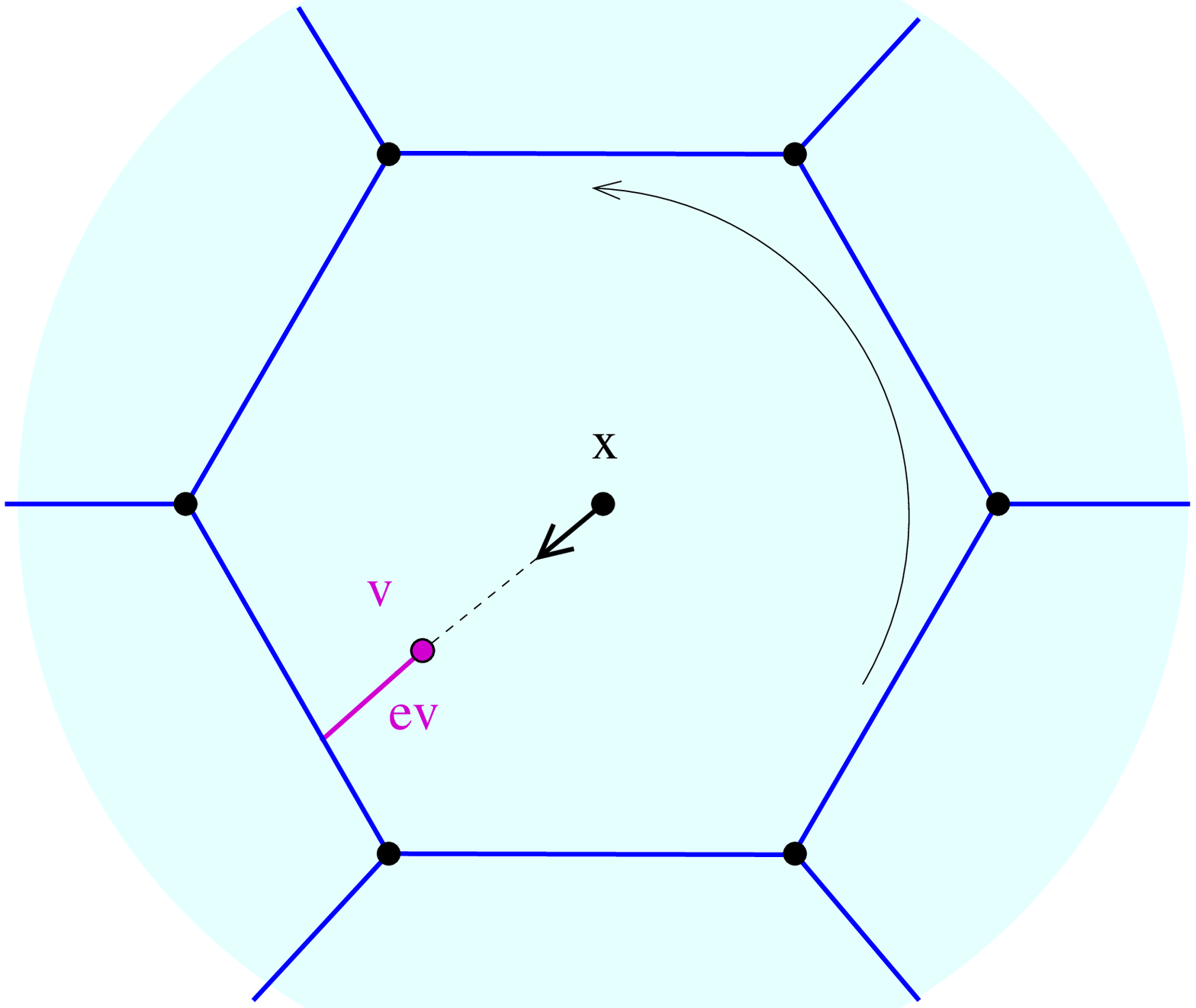}
\caption{Correspondence between a tail and a nonzero tangent vector.}
\label{fig:tail}
\end{figurehere}
}
\end{center}

Thus, we have realized an embedding of
$\M_{g,X,T}\times \R_{\geq 0}^X\times\Delta_T\times\R_+$
inside $\Mcomb_{g,X\cup T\cup V}$, where $V=\{\tilde{v}_1,\dots,
\tilde{v}_b\}$.
If we call $\Mcomb_{g,X,T}$ its image, we have obtained the following.

\begin{lemma}
$\Mcomb_{g,X,T}\simeq B\G(S_{g,n,b})$.
\end{lemma}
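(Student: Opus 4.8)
The statement $\Mcomb_{g,X,T}\simeq B\G(S_{g,n,b})$ should follow by combining the ribbon-graph-with-tails construction just set up with the cellularization results already established in the paper. The plan is to show that $\Mcomb_{g,X,T}$, as defined by the embedding of $\M_{g,X,T}\times\R^X_{\geq 0}\times\Delta_T\times\R_+$ into $\Mcomb_{g,X\cup T\cup V}$, is homotopy equivalent to $\M_{g,X,T}$, and then invoke the Remark above stating $B\G(S_{g,n,b})\simeq\M_{g,X,T}$ (which in turn rests on uniformization plus the Schwarz lemma guaranteeing that $\M_{g,X,T}$ is a fine moduli space, hence a $K(\G(S_{g,n,b}),1)$ when $b\geq 1$).

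First I would observe that the construction associating to $(\Si,\up,\ul q)$ together with tangent data $(v_1,\dots,v_b)$ the tailed ribbon graph $\GG$ is, by Theorem~\ref{thm:strebel} (Strebel's theorem) and the arc-graph duality of Section~\ref{ss:ribbon}, continuous and injective: the Jenkins-Strebel differential $\varphi$ depends continuously on $(\Si,\up,\ul q)$, the critical graph varies continuously, and the added tail $e_{v_j}$ of length $|v_j|$ along the vertical trajectory determined by $v_j$ depends continuously on $v_j$ and recovers $v_j$ uniquely (length gives $|v_j|$, the trajectory gives the direction). Conversely, from a metrized ribbon graph with the combinatorial type prescribed by $\Mcomb_{g,X,T}$ — namely with $b$ distinguished univalent vertices $\tilde v_j\in V$ joined by tails to the rest — one reconstructs $(\Si,v_1,\dots,v_b)$: forgetting the tails and applying $\Psi_{JS}^{-1}$ (Harer-Mumford-Thurston, as in \ref{ss:hmt}) yields $[\Si]$ with its marked points, and each tail yields a nonzero tangent vector at the corresponding $t_j$. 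So the map is a continuous bijection onto $\Mcomb_{g,X,T}$; the same properness argument as in the proof that $\ol\Psi_{JS}$ is a homeomorphism (Lemma~\ref{lemma:compact} together with Lemma~\ref{lemma:isom}, applied after passing to $\G(S,X\cup T\cup V)$-quotients of the corresponding Teichm\"uller-level spaces) upgrades it to a homeomorphism.

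Next I would contract the auxiliary factors. The space $\M_{g,X,T}\times\R^X_{\geq 0}\times\Delta_T\times\R_+$ deformation retracts onto $\M_{g,X,T}$ (scaling the $\R_+$ factor to a point, retracting the simplex $\Delta_T$ and the cone $\R^X_{\geq 0}$ to points), and this retraction is visibly compatible with the identification just described, so $\Mcomb_{g,X,T}\simeq\M_{g,X,T}$. Combining with $\M_{g,X,T}\simeq B\G(S_{g,n,b})$ from the Remark gives the claim. I expect the main obstacle to be the bookkeeping around the boundary weights and the degenerate locus $\{p_i=0\}$ or $\{q_j=0\}$: one must check that allowing $p_i=0$ does not spoil continuity or injectivity (this is the content of the Remark after Theorem~\ref{thm:strebel}, that the Strebel construction extends to $\up\neq 0$ with $p_i\geq 0$), and that requiring $q_j>0$ for all $j$ — which is what makes the $\Delta_T$ factor open enough and the tails genuinely present — matches the condition $b\geq 1$ under which $\M_{g,X,T}$ is an actual manifold and $B\G(S_{g,n,b})$; the rest is a routine homotopy argument once the homeomorphism $\Mcomb_{g,X,T}\cong \M_{g,X,T}\times\R^X_{\geq0}\times\Delta_T\times\R_+$ is in hand.
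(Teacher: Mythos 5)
Your proposal is correct and follows essentially the same route as the paper, which offers no separate proof beyond the construction itself: the tail construction realizes $\Mcomb_{g,X,T}$ as (the image of) an embedding of $\M_{g,X,T}\times\R_{\geq 0}^X\times\Delta_T\times\R_+$, the contractible factors are collapsed, and the earlier Remark that $\M_{g,X,T}\simeq B\G(S_{g,n,b})$ for $b\geq 1$ (uniformization plus Schwarz) finishes the argument. Your extra care about continuity, injectivity and the degenerate weights is a reasonable filling-in of details the paper leaves implicit, not a different method.
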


Notice that the embedding $\Mcomb_{g,X,T}\hra \Mcomb_{g,X\cup T\cup V}$
allows us to define (generalized) Witten cycles
$W_{m_*,X,T}$ on $\Mcomb_{g,X,T}$ simply by restriction.

\subsubsection{Gluing ribbon graphs with tails.}

Let $\GG'$ and $\GG''$ be two ribbon graphs with tails
$\ora{e'}$ and $\ora{e''}$,
i.e. $\ora{e'}\in E(\GG')$ and $\ora{e''}\in E(\GG'')$
with the property that
$\s'_0(\ora{e'})=\ora{e'}$ and $\s''_0(\ora{e''})=\ora{e''}$.

We produce a third ribbon graph $\GG$,
obtained by {\it gluing $\GG'$ and $\GG''$}
in the following way.

We set $E(\GG)=\left(E(\GG')\cup E(\GG'')\right)/\sim$,
where we declare that $\ora{e'}\sim \ola{e''}$ and
$\ola{e'}\sim\ora{e''}$. Thus, we have a natural $\s_1$
induced on $E(\GG)$. Moreover, we define $\s_0$
acting on $E(\GG)$ as
\[
\s_0([\ora{e}])=
\begin{cases}
[\s'_0(\ora{e})] & \text{if $\ora{e} \in E(\GG')$
and $\ora{e}\neq\ora{e'}$}\\ 
[\s''_0(\ora{e})] & \text{if $\ora{e} \in E(\GG'')$
and $\ora{e}\neq\ora{e''}$}
\end{cases}
\]
If $\GG'$ and $\GG''$ are metrized, then we induce
a metric on $\GG$ in a canonical way, declaring
the length of the new edge of $\GG$ to be $\ell(e')+\ell(e'')$.

Suppose that $\GG'$ is marked by $\{x_1,\dots,x_n,t'\}$
and $e'$ is a tail contained in the hole $t'$
and that
$\GG''$ is marked by $\{y_1,\dots,y_m,t''\}$ and
if $e''$ is a tail contained in the hole $t''$,
then $\GG$ is marked by $\{x_1,\dots,x_n,y_1,\dots,y_m,t\}$,
where $t$ is a new hole obtained {\it merging}
the holes centered at $t'$ and $t''$.

Thus, we have constructed a {\it combinatorial gluing map}
\[
\Mcomb_{g',X',T'\cup\{t'\}}\times\Mcomb_{g'',X'',T''\cup\{t''\}}
\lra \Mcomb_{g'+g'',X'\cup X''\cup\{t\},T'\cup T''}
\]

\subsubsection{The combinatorial stabilization maps.}

Consider the gluing maps
in two special cases which are slightly different
from what we have seen before.

Call $S_{g,X,T}$ a compact oriented surface
of genus $g$ with boundary components labeled by $T$
and marked points labeled by $X$.

Fix a trivalent ribbon graph $\GG_j$,
with genus $1$, one hole and $j$ tails
for $j=1,2$
(for instance, $j=2$ in Figure~\ref{fig:torus}).

\begin{center}
{\large
\begin{figurehere}
\psfrag{S}{$\GG_2$}
\psfrag{x}{$y$}
\psfrag{v}{{\color{Plum}$v$}}
\psfrag{w}{{\color{Plum}$w$}}
\includegraphics[width=0.6\textwidth]{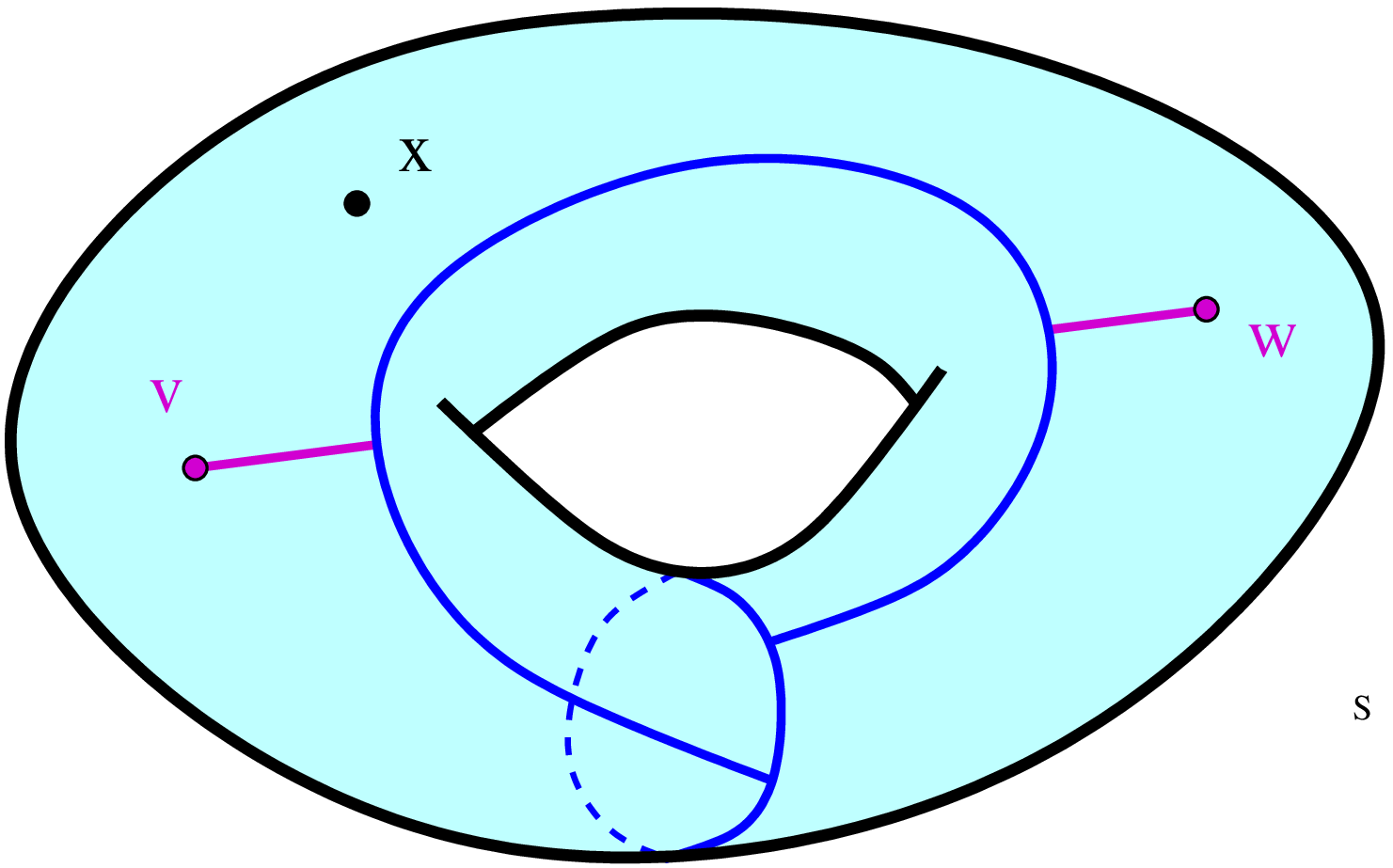}
\caption{Example of a fixed torus.}
\label{fig:torus}
\end{figurehere}
}
\end{center}

Consider the combinatorial gluing maps
\begin{align*}
\mathcal{S}_1^{comb} & :\Mcomb_{g,X,\{t\}} \lra \Mcomb_{g+1,X\cup\{t\}} \\
\mathcal{S}_2^{comb} & :\Mcomb_{g,X,\{t\}} \lra \Mcomb_{g+1,X,\{t\}}
\end{align*}
where $\mathcal{S}_j^{comb}$ is obtained
by simply gluing a graph $\GG$ in $\Mcomb_{g,X,\{t\}}$
with the fixed graph $\GG_j$, identifying the
unique tail of $\Mcomb_{g,X,\{t\}}$ with the $v$-tail
of $\GG_j$ and renaming the new hole by $t$.

It is easy to see that $\mathcal{S}^{comb}_2$
incarnates a stabilization map (obtained
by composing twice $\mathcal{Y}$
and once $\mathcal{V}$).

On the other hand, 
consider the map $\mathcal{S}_1:B\G(S_{g,X,\{t\}})
\rar B\G(S_{g+1,X\cup\{t\}})$, that glues
a torus
$S_{1,\{y\},\{t'\}}$
with one puncture and one boundary component to
the unique boundary component of $S_{g,X,\{t\}}$,
by identifying $t$ and $t'$, and
relabels the $y$-puncture by $t$.

The composition of $\mathcal{S}_1$
followed by the map $\pi_t$ that forgets the $t$-marking
\[
B\G(S_{g,X,\{t\}})\arr{\mathcal{S}_1}{\lra} B\G(S_{g+1,X\cup\{t\}})
\arr{\pi_t}{\lra} B\G(S_{g+1,X})
\]
induces an isomorphism on $H_k$ for $k\gg g$,
because it can be also obtained
composing $\mathcal{Y}$ and $\mathcal{V}$.

Notice that $\pi_t:B\G(S_{g+1,X\cup\{t\}})\rar B\G(S_{g+1,X})$
can be realized as a {\it combinatorial forgetful map}
$\pi_t^{comb}:\Mcomb_{g+1,X\cup\{t\}}(\R_+^X\times\{0\})\rar
\Mcomb_{g+1,X}(\R_+^X)$ in the following way.

Let $\GG$ be a metrized ribbon graph in $\Mcomb_{g+1,X\cup\{t\}}
(\R_+^X\times\{0\})$. If $t$ is marking a vertex of
valence $3$ or more, than just forget the $t$-marking.
If $t$ is marking a vertex of valence $2$, then
forget the $t$ marking and merge the two edges outgoing
from $t$ in one new edge.
Finally, if $t$ is marking a univalent
vertex of $\GG$ lying on an edge $e$, then
replace $\GG$ by $\GG/e$ and forget the $t$-marking.

\subsubsection{Behavior of Witten cycles.}

The induced homomorphism on Borel-Moore homology
\[
(\pi_t^{comb})^*:H^{BM}_*(\Mcomb_{g+1,X}(\R_+^X))\lra
H^{BM}_*(\Mcomb_{g+1,X\cup\{t\}})(\R_+^X\times\{0\})
\]
pulls $W_{m_*,X}$ back to the combinatorial
class $W^t_{m_*+\delta_0,X}$, corresponding to
(the closure of the locus of)
ribbon graphs with one univalent vertex
marked by $t$ and
$m_i+\delta_{0,i}$ vertices of
valence $(2i+3)$ for all $i\geq 0$.

We use now the fact that, for $X$ nonempty,
there is a homotopy equivalence
\[
E:\Mcomb_{g+1,X\cup\{t\}}(\R_+^X\times\R_+)
\arr{\sim}{\lra}
\Mcomb_{g+1,X\cup\{t\}}(\R_+^X\times \{0\})
\]
and that $E^*(W^t_{m_*+\delta_0,X})=W_{m_*+2\delta_0,X\cup\{t\}}$.

This last phenomenon can be understood
by simply observing
that $E^{-1}$
corresponds to opening the (generically univalent)
$t$-marked vertex to a small $t$-marked hole,
thus producing an extra trivalent vertex.

Finally, $(\mathcal{S}_1^{comb})^*(W_{m_*+2\delta_0,X\cup\{t\}})=
W_{m_*-\delta_0,X,\{t\}}$, because $\GG_1$ has exactly
$3$ trivalent vertices.

As a consequence, we have obtained that
\[
(\pi_t^{comb}\circ E\circ \mathcal{S}_1^{comb})^*:
H^{BM}_*(\Mcomb_{g+1,X}(\R_+^X)) \lra
H^{BM}_*(\Mcomb_{g,X,\{t\}}(\R_+^X\times\R_+))
\]
is an isomorphism for $g\gg *$ and
pulls $W_{m_*,X}$ back to $W_{m_*-\delta_0,X,\{t\}}$.\\

The other gluing map is much simpler: the induced
\[
(\mathcal{S}^{comb}_{2})^*:
H^{BM}_*(\Mcomb_{g+1,X,\{t\}}(\R_+^X\times\R_+))\lra
H^{BM}_*(\Mcomb_{g,X,\{t\}}(\R_+^X\times\R_+))
\]
carries $W_{m_*,X,\{t\}}$ to $W_{m_*-4\delta_0,X,\{t\}}$,
because $\GG_2$ has $4$ trivalent vertices.\\

We recall that a class in $H^k(\G_{\infty,X})$
(i.e. a stable class)
is a sequence of classes $\{\b_g \in H^k(\M_{g,X})\,|\,g\geq g_0\}$,
which are compatible with the stabilization maps,
and that two sequences are equivalent (i.e. they
represent the same stable class) if they are
equal for large $g$.

\begin{proposition}\label{prop:witten-stable}
Let $m_*=(m_0,m_1,\dots)$ be a sequence of nonnegative
integers such that $m_N=0$ for large $N$
and let $|X|=n>0$.
Define
\[
c(g)=4g-4+2n-\sum_{j\geq 1}(2j+1)m_j
\]
and call $g_0=\mathrm{inf}\{g\in \N\,|\,c(g)\geq 0\}$.
Then, the collection\\
\mbox{$\{ W_{m_*+c(g)\delta_0,X} \in H^{2k}(\M_{g,X})
\,|\, g\geq g_0\}$}
is a stable class, where $k=\sum_{j>0} j\,m_j$.
\end{proposition}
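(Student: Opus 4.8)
The plan is to read the statement off from the two pull-back computations established in the preceding paragraphs, once the valence counts on $\Mcomb_{g,X}$ and on $\Mcomb_{g,X,\{t\}}$ have been matched up. Throughout I identify $H^{BM}_*$ of the (open, even-dimensional orbifold) $\Mcomb_{g,X}(\R_+^X)$ with $H^*(\M_{g,X})$ by Poincar\'e--Lefschetz duality, so that the combinatorial cycles $W_{m_*,X}$ are viewed as cohomology classes, and likewise for the surfaces with one boundary component.

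The relevant numerology is as follows. For $W_{m_*+c\,\delta_0,X}$ to be defined on $\Mcomb_{g,X}$ one needs $\sum_{i\geq 0}(2i+1)(m_*+c\,\delta_0)_i=4g-4+2n$, i.e. $c=c(g)$; its real codimension is $2r$ with $r=\sum_{i\geq 0}i\,(m_*+c\,\delta_0)_i=\sum_{j\geq 1}j\,m_j=k$, which is independent of $g$, so $W_{m_*+c(g)\delta_0,X}\in H^{2k}(\M_{g,X})$ for all $g\geq g_0$. On $\Mcomb_{g,X,\{t\}}$, whose ribbon graphs carry one extra univalent tail at $t$, the same constraint is shifted by one unit, so that $W_{m_*+d\,\delta_0,X,\{t\}}$ is defined exactly for $d=c'(g):=c(g+1)-1$. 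Since $c(g+1)=c(g)+4$ we obtain $c(g+1)-c(g)=c'(g+1)-c'(g)=4$: raising the genus by one shifts the number of trivalent vertices by four in either picture, and passing from the open picture to the one-boundary picture shifts it by one.

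The two facts proven above now apply verbatim. The map $\Theta_g:=(\pi_t^{comb}\circ E\circ\mathcal{S}_1^{comb})^*:H^{2k}(\M_{g+1,X})\to H^{2k}(\M_{g,X,\{t\}})$ is an isomorphism for $g\gg k$ and sends $W_{m_*+c(g+1)\delta_0,X}$ to $W_{m_*+(c(g+1)-1)\delta_0,X,\{t\}}=W_{m_*+c'(g)\delta_0,X,\{t\}}$; and the stabilization map $\Xi_g:=(\mathcal{S}_2^{comb})^*:H^{2k}(\M_{g+1,X,\{t\}})\to H^{2k}(\M_{g,X,\{t\}})$, being a composite of the moves $\mathcal{Y}$ and $\mathcal{V}$, is an isomorphism for $g\gg k$ and sends $W_{m_*+c'(g+1)\delta_0,X,\{t\}}$ to $W_{m_*+(c'(g+1)-4)\delta_0,X,\{t\}}=W_{m_*+c'(g)\delta_0,X,\{t\}}$. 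The latter already says that $\{W_{m_*+c'(g)\delta_0,X,\{t\}}\}_g$ is compatible with the stabilization maps $\Xi_g$, hence is a stable class in $H^{2k}(\G_{\infty,X})$, proving the one-boundary analogue of the Proposition. Finally, since both $\pi_t^{comb}\circ E\circ\mathcal{S}_1^{comb}$ and $\mathcal{S}_2^{comb}$ are composites of $\mathcal{Y}$ and $\mathcal{V}$, the isomorphisms $\Theta_g$ intertwine the two stabilization systems in the stable range (any two genus-$(+2)$ composites built from these pieces induce the same homomorphism on $H^{2k}$ for $g\gg k$). Therefore $\Theta_g$ carries the stable collection $\{W_{m_*+c'(g)\delta_0,X,\{t\}}\}_g$ isomorphically and stabilization-equivariantly onto $\{W_{m_*+c(g+1)\delta_0,X}\}_g$, which is thus itself a stable class; as this collection agrees with $\{W_{m_*+c(g)\delta_0,X}\}_{g\geq g_0}$ for $g$ large, the latter is a stable class, as claimed.

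The part requiring care — everything else being bookkeeping or quoted from the previous paragraphs — is this last transport step: one must know that the specific combinatorial gluings $\mathcal{S}_1^{comb}$, $\mathcal{S}_2^{comb}$ and the homotopy equivalence $E$ (which uses $X\neq\emptyset$) genuinely realize Harer's stabilization maps, so that ``compatible with $\Theta_g$ and $\Xi_g$'' coincides with ``defines a stable class''. This is exactly the content of their description as products of $\mathcal{Y}$ and $\mathcal{V}$, together with Harer--Ivanov stability, which makes the genus-raising homomorphism on $H^{2k}$ unique in the stable range.
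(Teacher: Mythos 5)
Your proposal is correct and follows essentially the same route as the paper: the Proposition is there deduced from exactly the two pull-back computations you quote, namely that $(\pi_t^{comb}\circ E\circ \mathcal{S}_1^{comb})^*$ sends $W_{m_*,X}$ to $W_{m_*-\delta_0,X,\{t\}}$ and that $(\mathcal{S}_2^{comb})^*$ sends $W_{m_*,X,\{t\}}$ to $W_{m_*-4\delta_0,X,\{t\}}$, combined with $c(g+1)=c(g)+4$ and the identification of these combinatorial gluings with composites of Harer's maps $\mathcal{Y}$ and $\mathcal{V}$. Your explicit bookkeeping (the shifted constraint $c'(g)=c(g+1)-1$ on $\Mcomb_{g,X,\{t\}}$ and the verification $c'(g+1)-4=c'(g)$) and the final transport step just spell out what the paper leaves implicit.
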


It is clear that an analogous statement can be proven for
generalized Witten cycles.
Notice that Proposition~\ref{prop:witten-stable}
implies Miller's result \cite{miller:homology}
that $\psi$ and $\k$ classes are stable.

%

\frenchspacing

\end{document}